\newtheorem{lemma}{Lemma}[section]
\newtheorem{theorem}{Theorem}[section]
\newtheorem{proposition}{Proposition}[section]
\newtheorem{corollary}{Corollary}[section]
\numberwithin{equation}{section}
\newcommand{\dis}{\displaystyle}
\newcommand{\C}{\mathbb{C}}
\newcommand{\D}{\mathbb{D}}
\newcommand{\R}{\mathbb{R}}
\newcommand{\M}{\mathbb{M}}
\newcommand{\semiG}{\mathbb{A}}
\newcommand{\FM}{\mathbf{M}}
\newcommand{\FL}{\mathbf{L}}
\newcommand{\FI}{\mathbf{I}}
\newcommand{\CB}{\mathcal{B}}
\newcommand{\CC}{\mathcal{C}}
\newcommand{\CD}{\mathcal{D}}
\newcommand{\CE}{\mathcal{E}}
\newcommand{\CF}{\mathcal{F}}
\newcommand{\CM}{\mathcal{M}}
\newcommand{\CN}{\mathcal{N}}
\newcommand{\MRk}{\mathfrak{R}}
\newcommand{\na}{\nabla}
\newcommand{\al}{\alpha}
\newcommand{\be}{\beta}
\newcommand{\ga}{\gamma}
\newcommand{\om}{\omega}
\newcommand{\la}{\lambda}
\newcommand{\de}{\delta}
\newcommand{\si}{\sigma}
\newcommand{\pa}{\partial}
\newcommand{\eps}{\epsilon}
\newcommand{\De}{\Delta}
\newcommand{\lng}{{\langle}}
\newcommand{\rng}{{\rangle}}
\newcommand{\lag}{\langle}
\newcommand{\rag}{\rangle}
\begin{document}
\title[Green's function of the Navier-Stokes-Maxwell system]{Green's function and large time behavior of the Navier-Stokes-Maxwell system}
\thanks{{\it Keywords}: Navier-Stokes-Maxwell system, dissipation, Green's function, asymptotic stability.}%
\author[R.-J. Duan]{Renjun Duan}
\address{Department of Mathematics, The Chinese University of Hong Kong,
Shatin, Hong Kong}
\email{rjduan@math.cuhk.edu.hk}
\date{\today}

\begin{abstract}
In this paper, we are concerned with the system of the compressible Navier-Stokes equations coupled with the Maxwell equations through the Lorentz force in three space  dimensions. The asymptotic stability of the steady state with the strictly positive constant density and the vanishing velocity and electromagnetic field is established under small initial perturbations in regular Sobolev space. For that, the dissipative structure of this hyperbolic-parabolic system is studied to include the effect of the electromagnetic field into the viscous fluid and turns out to be more complicated than that in the simpler compressible Navier-Stokes system. Moreover, the detailed analysis of the Green's function to the linearized system is made with applications to derive the rate of the solution converging to the steady state.
\end{abstract}

\maketitle

\vspace{-5mm}

\tableofcontents

\section{Introduction}

Plasma dynamics is a field of studying flow problems of electrically conducting fluids, particularly of ionized gases \cite{KT,Ni}. The scope of plasma dynamics is very broad. A complete analysis in this field consists of the study of the gasdynamic field, the electromagnetic field and the radiation field simultaneously \cite{MiW,Pai}. Little has been done on such a complete analysis. Thus, many studies at both the microscopic kinetic level and the macroscopic continuous level are made in the individual field or in a little complicated situation taking into account the interaction of two fields with the rest one as a subsidiary factor \cite{Bi,Da,KT, MiW,Ni,Pai}.

At the macroscopic level, the motion of plasma fluids, for instance ionized gases, is described in terms of the classical gasdynamic quantities, such as density, pressure, flow velocity, etc, under the influence of the electromagnetic field induced by the electrically conducting fluid itself.  The fundamental system to govern the time evolution of those macroscopic quantities consists of the gasdynamic equations, e.g. the Navier-Stokes equations in the viscous case, coupled with the equations of the self-consistent electromagnetic field, e.g. the Maxwell equations. In general, plasma is a mixture of various species: charged ions and electrons, and neutral particles. In many simplified cases when the variation of one composition is small and its effect is negligible, the plasma may be considered as a single fluid.

In this paper, we are interested in the motion of one fluid described by the compressible Navier-Stokes equations coupled with the Maxwell equations through the Lorentz force.
In the isentropic case, the motion equations read as
\begin{equation}\label{NSM}
    \left\{\begin{array}{l}
      \dis \pa_t n + \na \cdot (nu)=0,\\[1mm]
      \dis \pa_t (nu) + \na \cdot (n u\otimes u) +\na P(n)=- n (E+ u\times B) +\nu \De u,\\[1mm]
      \dis \pa_t E - \na \times B = nu,\\[1mm]
      \dis \pa_t B + \na \times E = 0,\\[1mm]
      \dis \na \cdot E = n_{\rm b}- n, \ \ \na\cdot B =0.
    \end{array}\right.
\end{equation}
Here, the unknowns are $n=n(t,x)\geq 0$, $u=u(t,x)\in \R^3$, $E=E(t,x)\in \R^3$ and $B=B(t,x)\in\R^3$ over $\{t>0,x\in \R^3\}$, denoting the fluid density, fluid velocity, electric field and magnetic field, respectively. $\nu>0$ is a constant denoting viscosity coefficient, and $n_{\rm b}>0$ is a constant denoting the uniform background density (e.g. of ions). $P$ depending only on $n$ denotes the pressure function with the usual assumption that $P$ is smooth in the argument and $P'>0$. Initial data of the system is given as
\begin{equation}\label{NSM.ID}
    n(0,x)=n_0(x), u(0,x)=u_0(x), E(0,x)=E_0(x), B(0,x)=B_0(x), x\in \R^3.
\end{equation}
For convenience of presentation, we call \eqref{NSM} the Navier-Stokes-Maxwell system. Notice that the same terminology was used in Masmoudi \cite{Ma} but for a different modelling system.

The consideration of system \eqref{NSM} is motivated by three issues that we shall address in more detail later on. The first issue is to generalize some known results of the viscous fluid, cf. Matsumura-Nishida \cite{MN1}, Kawashima \cite{Ka2}, Hoff-Zumbrun \cite{HZ1}, to include the effect of the electromagnetic field.
Notice that system \eqref{NSM} is of the hyperbolic-parabolic type. Indeed, from the later proof, the nonlinear coupling of system \eqref{NSM} makes it possible for the hyperbolic Maxwell system to gain some weak spatial regularity or energy dissipation rate from the diffusion effect of the viscous fluid. On the other hand, as far as the large time behavior is concerned, the speed of convergence for the fluid velocity has to be reduced because of the appearance of the electromagnetic field. The second issue is to compare the current  study of the Navier-Stokes-Maxwell system with that for other two relative models,
the Navier-Stokes-Poisson system and the magnetohydrodynamic equations, cf. Li-Matsumura-Zhang \cite{LMZ-NSP}, Umeda-Kawashima-Shizuta \cite{UKS}. The final issue is to expose the special dissipative structure and regularity-loss property of system \eqref{NSM}, which have been recently investigated for both fluid and kinetic models, cf. Hosono-Kawashima \cite{HK}, Ide-Haramoto-Kawashima \cite{IHK}, Duan \cite{Du-EM}, Duan-Strain \cite{DS-VMB}, Duan \cite{Du-1VMB}. Notice that we shall derive system \eqref{NSM} from the one-species Vlasov-Maxwell-Boltzmann system \cite{Du-1VMB} by employing the Liu-Yang-Yu's macro-micro decomposition \cite{LYY}. This also  provides a link between the macroscopic Navier-Stokes-Maxwell system and the kinetic Vlasov-Maxwell-Boltzmann system for the single fluid with the electromagnetic field; the similar dissipative property for both  systems will be also discussed later on.

\subsection{Main results}

The Navier-Stokes-Maxwell system \eqref{NSM} has a class of naturally existing constant steady states in which the density is $n_{\rm b}$, the magnetic field is an arbitrary constant vector $B_{\rm b}$ and both the velocity and the electric field are zero. Here and hereafter, we call $B_{\rm b}$ the uniform background magnetic field. In this paper, $B_{\rm b}=0$ is assumed, and we are concerned with the asymptotic stability of the constant steady state with the density $n_{\rm b}$ and the vanishing velocity and electromagnetic field for the Cauchy problem \eqref{NSM}, \eqref{NSM.ID}. We shall point out later on that part of results can be directly generalized to the case of non-vanishing constant magnetic field, that is, $B_{\rm b}\neq 0$.

To the end, set
\begin{equation*}
   \ga=\sqrt{P'(n_{\rm b})}>0,\ \be=\sqrt{n_{\rm b}}>0,\ \mu=\nu/n_{\rm b}>0,
\end{equation*}
and take change of variables
\begin{equation*}
 \rho=n-n_{\rm b},\  v=\frac{n_{\rm b}}{\sqrt{P'(n_{\rm b})}}u,\ \widetilde{E}=(\frac{n_{\rm b}}{{P'(n_{\rm b})}})^{1/2}E,\ \widetilde{B}=(\frac{n_{\rm b}}{{P'(n_{\rm b})}})^{1/2}B.
\end{equation*}
Then, the Cauchy problem \eqref{NSM}, \eqref{NSM.ID} can be reformulated as
\begin{equation}\label{eq}
    \left\{\begin{array}{l}
     \dis \pa_t \rho +\ga \na\cdot v =-\frac{\ga}{\be^2} \na \cdot (\rho v),\\
      \dis \pa_t v+\ga \na \rho +\be \widetilde{E}-\mu \De v=- \frac{\ga}{\be^2} v \cdot \na v -\frac{\be^2}{\ga}(\frac{\na P(\rho+n_{\rm b})}{\rho+n_{\rm b}}-\frac{P'(n_{\rm b})}{n_{\rm b}}
\na \rho)\\[3mm]
      \dis \hspace{4.5cm}- \frac{\ga}{\be} v\times \widetilde{B} +\nu(\frac{1}{\rho+n_{\rm b}}-\frac{1}{n_{\rm b}}) \De v,
    \end{array}\right.
\end{equation}
and
\begin{equation}\label{eq.2}
  \left\{\begin{array}{l}
       \dis \pa_t \widetilde{E} - \na \times \widetilde{B} -\be v = \frac{1}{\be} \rho v ,\\
      \dis \pa_t \widetilde{B} + \na \times \widetilde{E} = 0,\\
      \dis \na \cdot \widetilde{E} = -\frac{\be}{\ga}\rho, \ \ \na\cdot \widetilde{B} =0,
    \end{array}\right.
\end{equation}
with
\begin{equation}\label{eq.id}
    \rho(0,x)=\rho_0(x), v(0,x)=v_0(x), \widetilde{E}(0,x)=\widetilde{E}_0(x), \widetilde{B}(0,x)=\widetilde{B}_0(x), x\in \R^3.
\end{equation}
One of the main results concerning the global existence and large time behavior of solutions to the above Cauchy problem on the Navier-Stokes-Maxwell system is stated as follows.

\begin{theorem}\label{thm.main}
Let $N\geq 4$.  Assume that initial data $[\rho_0,v_0,\widetilde{E}_0,\widetilde{B}_0]$ satisfies the last equation of \eqref{eq.2} and $\|[\rho_0,v_0,\widetilde{E}_0,\widetilde{B}_0]\|_{H^N}$ is sufficiently small. Then the Cauchy problem \eqref{eq}, \eqref{eq.2}, \eqref{eq.id} admits a unique global solution $[\rho,v,\widetilde{E},\widetilde{B}]$ with
\begin{eqnarray*}
&&\dis [\rho,v,\widetilde{E},\widetilde{B}]\in C([0,\infty);H^N(\R^3)),\\
&& \rho\in L^2((0,\infty);H^N(\R^3)),\
\na v\in L^2((0,\infty);H^N(\R^3)),\\
&&\na \widetilde{E}\in L^2((0,\infty);H^{N-2}(\R^3)),\
\na^2 \widetilde{B}\in L^2((0,\infty);H^{N-3}(\R^3)),
\end{eqnarray*}
and
\begin{multline}\label{thm.main.1}
    \|[\rho(t),v(t),\widetilde{E}(t),\widetilde{B}(t)]\|_{H^N}^2+\int_0^t (\|\rho(s)\|_{H^N}^2+\|\na v(s)\|_{H^{N}}^2\\
    +\|\na \widetilde{E}(s)\|_{H^{N-2}}^2+\|\na^2 \widetilde{B}(s)\|_{H^{N-3}}^2)ds
    \leq C\|[\rho_0,v_0,\widetilde{E}_0,\widetilde{B}_0]\|_{H^N}^2
\end{multline}
for any $t\geq 0$. Moreover, if $\|[\rho_0,v_0,\widetilde{E}_0,\widetilde{B}_0]\|_{L^1\cap H^{N+2}}$ is further sufficiently small, then the obtained solution $[\rho,v,\widetilde{E},\widetilde{B}]$ satisfies
\begin{eqnarray*}
  \|\rho(t)\| &\lesssim& (1+t)^{-1},\\
    \|v(t)\| &\lesssim& (1+t)^{-\frac{5}{8}},\\
      \|\widetilde{E}(t)\| &\lesssim& (1+t)^{-\frac{3}{4}}\ln (3+t),\\
        \|\widetilde{B}(t)\| &\lesssim& (1+t)^{-\frac{3}{8}},
\end{eqnarray*}
for any $t\geq 0$.
\end{theorem}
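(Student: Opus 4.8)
The plan is to combine a local existence theorem with uniform-in-time a priori estimates via a continuity argument, and then obtain the decay rates by a Duhamel/Green's function analysis of the linearized problem. First I would establish local existence and uniqueness of solutions in $C([0,T];H^N)$ for the quasilinear symmetric-hyperbolic-parabolic system \eqref{eq}, \eqref{eq.2} by a standard iteration/energy scheme; this is routine given $N\geq 4$ (so $H^{N-1}\hookrightarrow L^\infty$ controls the nonlinearities). The heart of the global part is the closed a priori estimate
\begin{equation*}
\|[\rho(t),v(t),\widetilde{E}(t),\widetilde{B}(t)]\|_{H^N}^2+\int_0^t\mathcal{D}(s)\,ds\lesssim \|[\rho_0,v_0,\widetilde{E}_0,\widetilde{B}_0]\|_{H^N}^2,
\end{equation*}
with the dissipation rate $\mathcal{D}$ measuring exactly $\|\rho\|_{H^N}^2+\|\na v\|_{H^N}^2+\|\na\widetilde{E}\|_{H^{N-2}}^2+\|\na^2\widetilde{B}\|_{H^{N-3}}^2$ as stated. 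The top-order energy estimate comes from applying $\na^\alpha$, $|\alpha|\leq N$, to \eqref{eq}, \eqref{eq.2}, taking the natural $L^2$ inner products, and using the antisymmetry of the first-order terms $\ga\na\cdot v,\ \ga\na\rho$ and of the Maxwell terms $\na\times\widetilde B,\ \na\times\widetilde E$; this yields $\mu\|\na v\|_{H^N}^2$ directly but nothing for $\rho$, $\widetilde E$, $\widetilde B$. To recover the missing dissipation I would construct an interaction (Kawashima-type) functional $\mathcal{E}_{\rm int}(t)$ built from cross terms such as $\sum_{|\alpha|\le N-1}\langle \na^\alpha\na\rho,\na^\alpha v\rangle$ (to extract $\|\na\rho\|_{H^{N-1}}^2$, hence with the constraint $\na\cdot\widetilde E=-\frac{\be}{\ga}\rho$ also $\|\rho\|^2$ and part of $\|\widetilde E\|$), $-\langle \na^\alpha v,\na^\alpha\widetilde E\rangle$-type terms coupling the momentum equation to $\widetilde E$ through the $\be\widetilde E$ forcing, and $\langle\na\times\widetilde B,\na\widetilde E\rangle$-type terms to reach $\widetilde B$ via $\partial_t\widetilde E-\na\times\widetilde B=\cdots$; each such term, differentiated in time along the system, produces the desired positive quantity plus lower-order or higher-derivative remainders absorbed by $\mu\|\na v\|_{H^N}^2$. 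The Lyapunov functional is then $\mathcal{E}(t)=\|[\rho,v,\widetilde E,\widetilde B]\|_{H^N}^2+\kappa\,\mathcal{E}_{\rm int}(t)$ for small $\kappa>0$, which is equivalent to $\|\cdot\|_{H^N}^2$ and satisfies $\frac{d}{dt}\mathcal{E}+c\,\mathcal{D}\lesssim \sqrt{\mathcal{E}}\,\mathcal{D}$; smallness of the initial data closes the bootstrap and gives \eqref{thm.main.1} together with global existence.

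The main obstacle, and the genuinely new feature compared with the compressible Navier-Stokes system, is the \emph{regularity-loss} structure visible in the asymmetry of the dissipation: $\widetilde E$ and $\widetilde B$ are controlled only in $H^{N-2}$ and $H^{N-3}$, with one or two fewer derivatives than the fluid variables. Concretely, the interaction functionals that reach $\widetilde B$ cost derivatives because the only coupling transmitting dissipation to the electromagnetic field passes through $v$ (via $\be v$ in the $\widetilde E$-equation) and then through the curl, so estimating $\|\na^2\widetilde B\|_{H^{N-3}}$ forces bounds on $\|\na v\|$ at order up to $N$, i.e. there is a strict loss. I would handle this by carefully tracking the derivative indices in each cross term—using $\|\na\rho\|_{H^{N-1}}$, $\|\na\widetilde E\|_{H^{N-2}}$, $\|\na^2\widetilde B\|_{H^{N-3}}$ rather than full $H^N$ norms on those components—and by ensuring every remainder term in $\frac{d}{dt}\mathcal{E}_{\rm int}$ is either a nonlinear cubic term (hence $\lesssim\sqrt{\mathcal{E}}\,\mathcal{D}$) or is bounded by a small multiple of $\mu\|\na v\|_{H^N}^2$. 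The constraint equations $\na\cdot\widetilde E=-\frac{\be}{\ga}\rho$ and $\na\cdot\widetilde B=0$, which are propagated by the system, are essential: the first converts control of $\rho$ into control of $\na\cdot\widetilde E$ and lets Hodge decomposition reduce $\widetilde E$ to its divergence-free part, while $\na\cdot\widetilde B=0$ does the same for $\widetilde B$, so the curl terms in the interaction functional actually control full gradients.

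For the decay rates I would invoke the Green's function / semigroup analysis of the linearized system announced in the abstract (available in the body of the paper), which in Fourier space diagonalizes into a low-frequency part behaving like a damped dispersive-parabolic system and a high-frequency part decaying with a fixed (non-uniform, regularity-loss) rate $e^{-ct}$ only on sufficiently regular data. Writing the solution as $U(t)=e^{t\mathbb{L}}U_0+\int_0^t e^{(t-s)\mathbb{L}}\mathcal{N}(U(s))\,ds$, with $\mathcal{N}$ the quadratic nonlinearities on the right of \eqref{eq}, \eqref{eq.2}, the linear decay estimates give, for $U_0\in L^1\cap H^{N+2}$, bounds like $\|\rho^{L}(t)\|\lesssim(1+t)^{-3/4}$, $\|v^{L}(t)\|\lesssim (1+t)^{-3/4}$ or $-5/8$, etc., where the slower rate for $v$ and the logarithm for $\widetilde E$ come precisely from the degenerate dissipation of the electromagnetic block at low frequencies (this is where $\ln(3+t)$ enters, from a borderline time integral $\int_0^t(1+t-s)^{-5/4}(1+s)^{-3/4}\,ds$ or similar). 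The nonlinear terms are quadratic in $[\rho,v,\widetilde E,\widetilde B]$ and their gradients, so using the already-established uniform $H^N$ bound \eqref{thm.main.1} to control high derivatives and the decay of the lower-order norms, a time-weighted norm
\begin{equation*}
\mathcal{X}(t)=\sup_{0\le s\le t}\Big[(1+s)\|\rho(s)\|+(1+s)^{5/8}\|v(s)\|+\frac{(1+s)^{3/4}}{\ln(3+s)}\|\widetilde E(s)\|+(1+s)^{3/8}\|\widetilde B(s)\|\Big]
\end{equation*}
closes in a Duhamel estimate $\mathcal{X}(t)\lesssim \|U_0\|_{L^1\cap H^{N+2}}+\mathcal{X}(t)^2$, yielding the stated rates once the data is small. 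The delicate points here are bookkeeping the borderline convolution integrals that produce the logarithm and the $-5/8$ exponent, and making sure the extra two derivatives ($H^{N+2}$) suffice to absorb the high-frequency regularity loss in every nonlinear term.
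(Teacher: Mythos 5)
Your plan for the \emph{global existence} part of the theorem is essentially the paper's: a local solution via a standard iteration, a Kawashima-type interaction functional built from the cross terms $\langle\na^\alpha\na\rho,\na^\alpha v\rangle$, $\langle\na^\alpha\na\times\widetilde E,\na^\alpha\na\times v\rangle$, $\langle\na^\alpha(-\na\times\widetilde B),\na^\alpha\widetilde E\rangle$ (each placed at a carefully chosen derivative level to respect the regularity-loss ladder), the constraint $\na\cdot\widetilde E=-\frac{\be}{\ga}\rho$, and a continuity argument. That matches Theorem~\ref{thm.ge} and its proof, including the specific dissipation $\CD_N$ and the structure of the functional.

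The gap is in the decay part. Your bootstrap is a \emph{pure Duhamel} argument with a single weighted functional $\mathcal{X}(t)$ that tracks only the $L^2$ norms of the four components, supplemented by the uniform (non-decaying) $H^N$ bound. This cannot close here. Because the Green's function is of regularity-loss type, the linear decay estimates in Corollary~\ref{cor.ld} involve \emph{higher Sobolev norms} of the data on the right-hand side (e.g.\ $\|\na^2 E_0\|$, $\|\na^3 B_0\|$, $\|\na^3 E_0\|$, $\|\na B_0\|$). When you run Duhamel, the nonlinear sources $h_1,h_2,h_3$ therefore appear not just in $L^1\cap L^2$ but also through quantities such as $\|\na^2 h_3(s)\|$, $\|\na^3 h_3(s)\|$, $\|\na h_2(s)\|$. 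Your functional $\mathcal{X}(t)$ does not control the decay of $\|\na^j U\|$ for $j\geq 1$, and the uniform $H^N$ bound \eqref{thm.main.1} only says these are bounded, not decaying; feeding a merely bounded $\|\na^2 h_3(s)\|$ into $\int_0^t(1+t-s)^{-3/4}\,\cdot\,ds$ gives no decay at all. Interpolating between the $L^2$ rate and the uniform $H^N$ bound degrades the exponents and still does not recover the needed rates.

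What the paper does, and what is missing from your sketch, is a nested bootstrap that establishes decay of the \emph{full high-order energy} before attempting component-wise Duhamel rates. First (Lemma~\ref{lem.X}), the time-weighted differential inequality $(1+t)^{3/4+\eps}\CE_N(U)+\int_0^t(1+s)^{3/4+\eps}\CD_N\,ds\lesssim\CE_{N+1}(U_0)+\int_0^t(1+s)^{-1/4+\eps}\|[u,E,B]\|^2\,ds$ is derived from the energy inequality \eqref{lem.X.p1}, using that $\CE_N$ is controlled by the (time-integrable) dissipation $\CD_{N+1}$ plus the lowest-order undamped norms $\|[u,E,B]\|^2$, and these last are handled by Duhamel via Corollary~\ref{cor.ld}. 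This yields the key intermediate decay $\|U(t)\|_{H^N}^2\lesssim(1+t)^{-3/4}$. Only with that in hand does the paper close a second weighted functional $Y(t)$ combining $\|u\|^2$ with the high-order energy $\CE_N^{\rm h}(U)$ (Lemmas~\ref{lem.hee}, \ref{lem.Y}) to reach $(1+t)^{-5/4}$, and finally extracts the rates for $n$ and $E$ (Lemma~\ref{lem.nE}). The combination of a time-weighted energy estimate with Duhamel is essential here; a one-shot Duhamel bound on the $L^2$ norms, as in your $\mathcal{X}(t)$, is structurally insufficient for a regularity-loss system.
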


We now give some remarks on the above theorem. First of all, the global existence result stated in Theorem \ref{thm.main} also holds for the case of $B_{\rm b}\neq 0$. In fact, the proof is easily modified to derive the a priori estimate as in \eqref{thm.main.1} by noticing that the additional linear term generated by  $B_{\rm b}\neq 0$ is $u\times B_{\rm b}$ which is orthogonal to the velocity $u$; see the proof of Theorem \ref{thm.ge}. Secondly, \eqref{thm.main.1} shows that the electromagnetic field satisfying the Maxwell system indeed has some time-space integrability property weaker than the fluid components, whereas its highest-order spatial derivative is not time-space integrable, which becomes a typical feature of this kind of systems with the regularity-loss property.

Thirdly, the time decay rates stated in  Theorem \ref{thm.main} depend  essentially on the analysis of the Green's function $G$ of the linearized Navier-Stokes-Maxwell system; see Theorem \ref{thm.gr}, Theorem \ref{thm.up}, Theorem \ref{thm.tdp} and Theorem \ref{thm.odecay}. Specifically, from Theorem \ref{thm.up}, the Fourier transform of $G$ have different behaviors over different frequency domains, in the rough way as
\begin{equation}\label{est.G1}
    |\hat{G}(t,k)|\lesssim\left\{\begin{array}{ll}
      \dis e^{-O(1)|k|^2 t} +e^{-O(1)|k|^4 t} &\dis \quad |k|\leq \eps,\\[3mm]
      \dis e^{-O(1)t} &\dis \quad \eps\leq |k|\leq L,\\[3mm]
      \dis e^{-O(1)t}+e^{-O(1)|k|^2 t} + e^{-\frac{O(1)t}{|k|^2}} &\dis \quad |k|\geq L,
    \end{array}\right.
\end{equation}
for two properly chosen constants $0<\eps\ll 1 \ll L<\infty$; more precise estimates on $\hat{G}$ can be found in Theorem \ref{thm.up}.
It should be emphasized those time rates obtained in Theorem \ref{thm.lr} on the basis of the elementary Lyapunov property of the  linearized system, which implies
\begin{equation}\label{est.G2}
   |\hat{G}(t,k)|\lesssim e^{-\frac{|k|^4t}{(1+|k|^2)^3}},\ \ k\in \R^3,
\end{equation}
fail to yield the ones of the nonlinear solution given in Theorem \ref{thm.main} above. Notice that \eqref{est.G2}, which is a simpler form of \eqref{est.G1}, only provides the partial information of $\hat{G}$.

Finally we point out some differences of time rates between the linearized and nonlinear cases.
From Corollary  \ref{cor.ld}, the solution $[\rho,v,\widetilde{E},\widetilde{B}]$ to the linearized homogeneous system corresponding to \eqref{eq}-\eqref{eq.2} decays as
\begin{eqnarray*}
  \|\rho(t)\| &\lesssim& (1+t)^{-\frac{5}{4}},\\
    \|v(t)\| &\lesssim& (1+t)^{-\frac{5}{8}},\\
      \|\widetilde{E}(t)\| &\lesssim& (1+t)^{-\frac{3}{4}},\\
        \|\widetilde{B}(t)\| &\lesssim& (1+t)^{-\frac{3}{8}},
\end{eqnarray*}
for any $t\geq 0$, provided that  $[\rho_0,v_0,\widetilde{E}_0,\widetilde{B}_0]$ belongs to $L^1\cap H^{s}$ for properly large $s$. Thus, in Theorem \ref{thm.main}, the time decay rates of $v$ and $\widetilde{B}$ are optimal in the sense that they are the same as those in the linearized case, but other two rates for $\rho$ and $\widetilde{E}$ are not optimal because they are strictly smaller than those in the linearized case. From the proof of Theorem \ref{thm.main} later on,  the slower time decay for $\rho$ and $\widetilde{E}$ mainly results from the nonlinear effect, particularly in the presence of a nonlinear term  $v\times \widetilde{B}$ in the momentum equation of system \eqref{eq}.

\subsection{Motivations and literature}

As mentioned before, there are three motivations resulting the current investigation of the Navier-Stokes-Maxwell system. The first motivation is to generalize some known results of the viscous fluid, particularly about the global existence and large time behavior of classical solutions near non-vacuum steady state, in order to include the effects of the electromagnetic field. In fact, there are extensive studies concerning those results for the Navier-Stokes equations in the context of gasdynamics,
in the form of
\begin{equation}\label{NS}
    \left\{\begin{array}{l}
      \dis \pa_t n + \na \cdot (nu)=0,\\[1mm]
      \dis \pa_t (nu) + \na \cdot (n u\otimes u) +\na P(n)=\nu \De u,
    \end{array}\right.
\end{equation}
where the electromagnetic field is absent and thus the Maxwell system is decoupled from the fluid dynamic equations. We here mention only some of results related to our interest.

One of the most important aspects of the Navier-Stokes system \eqref{NS} is due to its hyperbolic-parabolic character. It is crucial to understand the interaction between the hyperbolic conservative part and the parabolic diffusive part. Indeed, as we shall see even in the general situation \cite{Ka2,SK,Vi}, this kind of interaction can often induce some weak smoothness and dissipation of the hyperbolic component of the coupled system. It is Kanel who first observed this property in the proof of global existence of solutions to \eqref{NS} in one space dimension. Later on, in three space dimensions, global well-posedness of the near-constant-equilibrium solutions to the Cauchy problem or the initial boundary value problem was established by Matsumura-Nishida \cite{MN-CMP,MN0,MN1} using the classical energy method. Since then, the perturbation theory of the Navier-Stokes system has been investigated by many authors.

The study of the dissipative property of the system \eqref{NS} is not only important for the global existence of solutions under small perturbations but also useful for obtaining the rates of convergence of solutions trending towards the equilibrium. Still in \cite{MN0}, Matsumura-Nishida obtained the rate of convergence for the Navier-Stokes system in $\R^3$ as
\begin{equation*}
    \|[n-n_{\rm b},u]\|_{H^2}\leq C(1+t)^{-\frac{3}{4}}
\end{equation*}
if initial perturbation is small belonging to $H^4\cap L^1$. Ponce  \cite{Po} gave the time decay rate in $L^p$ with $2\leq p\leq \infty$ as
\begin{equation*}
    \|[n-n_{\rm b},u]\|_{L^p}\leq C(1+t)^{-\frac{3}{2}(1-\frac{1}{p})}
\end{equation*}
provided that small initial perturbation belongs to $H^s\cap W^{s,1}$ for $s$ large enough.

On the other hand,
a general approach for obtaining the optimal time decay of solutions in $L^p$ space with $p\geq 2$ in any space dimension was developed by Kawashima \cite{Ka2,Ka0} and Shizuta-Kawashima \cite{SK}. Precisely, the main tool used by Kawashima \cite{Ka2,Ka0} is the Fourier analysis applied to the linearized homogeneous system, and the key part in the proof is to construct some compensation function to capture the dissipation of the hyperbolic component in the solution and then obtain an estimate on the Fourier transform $\hat{U}$ of the solution $U$ as
\begin{equation}\label{U.bd}
|\hat{U}(t,k)|\leq C e^{-\frac{c|k|^2}{1+|k|^2}t}|\hat{U}_0|
\end{equation}
for any $t$ and $k$. Thus, from the above estimate, it is well known the solution over the high frequency domain decays exponentially while over the low frequency domain it decays polynomially with the rate of the heat kernel.
Moreover, in \cite{SK}, the authors proposed a condition, which is now called the Shizuta-Kawashima condition, in order to assure the time decay of solutions to linear systems of equations of the hyperbolic-parabolic type. The Shizuta-Kawashima condition also plays a key role in the study of stability and large time behavior of the solution to the nonlinear system; see Hanouzet-Natalini \cite{HN}, Yong \cite{Yo}, Bianchini-Hanouzet-Natalini  \cite{BHN} and references therein. For some extensions of the Shizuta-Kawashima condition, refer to Beauchard-Zuazua \cite{BZ} and Mascia-Natalini \cite{MN}.

About the time decay in $L^p$ space with $1\leq p\leq 2$, Zeng \cite{Z2} provided a complete analysis of the Green matrix with some sharp, pointwise bounds for the one-dimensional viscous heat-conductive fluid system. The difficulty for the rate in $L^p$ for $p$ close to one lies in the fact that the density satisfies the mass conservation law.    The result of \cite{Z2} was later extended by Liu-Zeng \cite{LZ1} to the case of zero heat conductivity and also by Liu-Zeng \cite{LZ2,LZ3} to general hyperbolic-parabolic systems of conservation laws in one space dimension.
Besides, an extension of \cite{Z2} to the case of multi-dimensions was made by Hoff-Zumbrun \cite{HZ1,HZ2}. Precisely, \cite{HZ1} showed that for the Navier-Stokes system in $\R^3$,
\begin{equation*}
    \|[n-n_{\rm b},u]\|_{L^p}\leq \left\{\begin{array}{ll}
      \dis C t^{-\frac{3}{2}(1-\frac{1}{p})}, &\dis \quad 2\leq p\leq \infty,\\[3mm]
       \dis C t^{-2(1-\frac{1}{p})}, &\dis\quad 1\leq p<2,
    \end{array}\right.
\end{equation*}
for all large $t>0$, provided that the small initial perturbation  belongs to $H^s\cap L^1$ with $s\geq 4$.
The main idea of \cite{HZ1} is to write the Green matrix $G$ in terms of the Fourier transform as
\begin{eqnarray*}
&\dis \hat{G}(t,k)=\hat{G}^{(1)}(t,k)+\hat{G}^{(2)}(t,k),\\
&\dis \hat{G}^{(1)}(t,k)= \hat{G}(t,k)\chi(k),\ \ \hat{G}^{(2)}(t,k)= \hat{G}(t,k)[1-\chi(k)]
\end{eqnarray*}
with $\chi(k)$ a smooth cutoff function supported on the finite domain, and thus shift the difficulty of the analysis to the high-frequency part $\hat{G}^{(2)}(t,k)$. We also mention the work in the case of multi-dimensions by Liu-Wang \cite{LW} for the pointwise estimate on the Green's function, and similar work by Li \cite{Li} and Wang-Yang \cite{WY}.
Finally, we mention a recent work Li-Zhang \cite{LZ} for the study of the optimal large time behavior of the Navier-Stokes system in the Besov space when initial perturbation does not belong to $L^1$ but some homogeneous Besov space which can induce some extra time rate.

In addition, when there is a given potential force, the density of the naturally existing steady state is not a constant but a function of spatial variable. In this case, the Fourier analysis fails due to the difficulty of variant coefficients.
The slow time decay property of solutions was studied by Deckelnick \cite{De1,De2} on the basis of the refined energy estimates together with an ordinary differential  inequality. Notice that even though the time rate obtained in \cite{De1,De2} is not optimal, the developed method has proven very useful with applications to the case when the spectral analysis which can lead to the optimal time decay rate is not clear; see Yang-Zhao \cite{YZ-CMP} for a study of the Vlasov-Poisson-Boltzmann system. An extension of \cite{De1,De2}  was made by Ukai-Yang-Zhao \cite{UYZ}  to obtain the almost optimal time decay rates. Recently, Duan-Ukai-Yang-Zhao \cite{DUYZ} developed a method of the combination of the spectral analysis  and energy estimates to deal with the optimal time decay of solutions, and an extension of \cite{DUYZ} was made in \cite{DLUY}; see also \cite{DUY} for the detailed presentation of the method. When the given external force depending only on spatial variable has non-zero rotation, the velocity of the steady flow thus can not vanish; for such case, the existence  and time asymptotic stability of the nontrivial steady state were studied by Shibata-Tanaka \cite{ST1} and \cite{ST2}, respectively.

At this moment, those mentioned works about the optimal time decay of solutions over the unbounded domain are concerned with the case of the whole space. Kagei-Kobayashi \cite{KK2,KK1} proved the large time behavior of solutions over the half space in $\R^3$. Kobayashi-Shibata \cite{KoSh} and later Kobayashi \cite{Ko} gave an analysis of the Green's function and large time behavior of solutions for the compressible Navier-Stokes equations in an exterior domain of $\R^3$.

\medskip

The second motivation is to compare the current study of the Navier-Stokes-Maxwell system to the one of some modelling systems such as the  Navier-Stokes-Poisson system in semiconductor theory and the MHD system in magnetohydrodynamics. Here, we also mention some previous work of these two systems only related to our study.
First of all, the Navier-Stokes-Poisson system can be used to simulate, for instance in semiconductor devices \cite{MRS}, the transport of charged particles under the influence of the electrostatic potential force field, and in the compressible case it takes the form of
\begin{equation}\label{NSP}
    \left\{\begin{array}{l}
      \dis \pa_t n + \na \cdot (nu)=0,\\[2mm]
      \dis \pa_t (nu) + \na \cdot (n u\otimes u) +\na P(n)=-n E+\nu \De u,\\[2mm]
      \dis E=\na\Phi,\ \ \na\cdot E=n_{\rm b}-n.
    \end{array}\right.
\end{equation}
Here, compared with the Navier-Stokes-Maxwell system, the magnetic field $B$ is omitted and the electric field $E$ is generated by a consistent potential function $\Phi$ therefore satisfying the Poisson equation
\begin{equation*}
    \De \Phi=n_{\rm b}-n.
\end{equation*}

It is easy to see that the compressible Navier-Stokes-Poisson system \eqref{NSP} is a coupled system of the hyperbolic-parabolic-elliptic type. This essential feature makes the Navier-Stokes-Poisson system have some special dissipative and time decay properties different from both the Navier-Stokes system and the Navier-Stokes-Maxwell system. Li-Matsumura-Zhang \cite{LMZ-NSP} made a delicate analysis of the linearized Navier-Stokes-Poisson system and investigated the spectrum of the linear semigroup in terms of the decomposition of wave modes
at lower frequency and higher frequency respectively, which can be used to deal with the influences of the rotating effect of the electric field. Precisely, for the rate of convergence, they showed that the solution over $\R^3$ satisfies
\begin{eqnarray*}
   \|n-n_{\rm b}\|&\leq & C (1+t)^{-\frac{3}{4}},\\
   \|u\|&\leq &C(1+t)^{-\frac{1}{4}},\\
   \|E\|&\leq &C(1+t)^{-\frac{1}{4}},
\end{eqnarray*}
for any $t\geq 0$, provided that initial perturbation belonging to $H^s\cap L^1$ for $s$ properly large is sufficiently small, and furthermore, these time rates are optimal due to the fact that they can also be obtained as the lower bounds if the Fourier transform of initial density perturbation $n_0-n_{\rm b}$ is strictly positive near some neighborhood of the origin, i.e.
\begin{equation}\label{n.inf}
    \inf_{|k|\ll 1}|\widehat{n_0-n_{\rm b}}(0,k)|>0.
\end{equation}
As pointed out in \cite{LMZ-NSP}, it is the rotating effect of electric
field which reduces the time decay rate of the momentum component compared with the Navier-Stokes system \eqref{NS}. We remark that if  $\widehat{n_0-n_{\rm b}}(0,k)$ behaves instead like $|k|$ as $|k|$ tends to zero, for instance
\begin{equation*}
    \int_{\R^3}(1+|x|)|n_0(x)-n_{\rm b}|dx<\infty
\end{equation*}
holds true, then those time decay rates obtained in \cite{LMZ-NSP} could be improved. Moreover, \eqref{n.inf} implies that $E_0=\na \Phi_0$ does not belong to $L^1(\R^3)$, otherwise it is possible to improve the time rate of $\|E\|$ as $(1+t)^{-\frac{3}{4}}$.

However, in our discussion of time decay rates for the linearized Navier-Stokes-Maxwell system, since the electric field $E$ satisfies a time evolution equation and initial data $E_0$ is given belonging to $L^1$, the higher time rate $(1+t)^{-\frac{3}{4}}$ of $\|E\|$ is natural. In addition, the time rate $(1+t)^{-\frac{5}{8}}$ of $\|u\|$ for the momentum component is still strictly slower than $(1+t)^{-\frac{3}{4}}$ in the case of the Navier-Stokes system.  This point essentially results from the magnetic field effect of $B$ which has the slowest time decay rate among all the components of the system. Finally, $\|n-n_{b}\|$ decays as $(1+t)^{-\frac{5}{4}}$ in the fastest way, because on one hand, the constrict condition $n-n_{\rm b}=-\na\cdot E$ makes $n-n_{b}$ gain one more spatial derivative, and on the other hand, $n$ at the linearized level turns out to be decoupled with and hence not effected by the magnetic field $B$ with the slower decay, where the decoupling property is due to the hidden assumption in our study that the background magnetic field is zero. Thus, it would be quite interesting to extend the current results to the case of non-vanishing constant  background magnetic field.

After \cite{LMZ-NSP}, Wang-Wu \cite{WW} obtained the pointwise estimates of solutions to the Navier-Stokes-Poisson system through the analysis of the Green's function. Zhang-Li-Zhu \cite{ZLZ} generalized the isentropic case to the non-isentropic case with additional efforts taking care of the temperature equation. Hao-Li \cite{HL} proved the global existence of solutions in the Besov space. We also mention the work Zhang-Tan \cite{ZT} for a study in two space dimensions and a survey Hsiao-Li \cite{HL-s}.

\medskip

Next, let us discuss a little about the magnetohydrodynamic system \cite{Bi,Da}. In the isentropic case, it reads
\begin{equation}\label{MHD}
    \left\{\begin{array}{l}
      \dis \pa_t n + \na \cdot (nu)=0,\\[2mm]
      \dis \pa_t (nu) + \na \cdot (n u\otimes u) +\na P(n)=(\na\times B)\times B+\nu \De u,\\[2mm]
      \dis \pa_t B +\na\times (\na\times B-u\times B)=0,\ \ \na\cdot B=0.
    \end{array}\right.
\end{equation}
Different from the Navier-Stokes-Poisson system, system \eqref{MHD} models the interaction between the viscous fluid and the variable magnetic field. In one and two dimensions, Kawashima-Okada \cite{KO} and Kawashima \cite{Ka1},  respectively, proved the global existence of smooth solutions near constant equilibrium states. Later, Umeda-Kawashima-Shizuta \cite{UKS} gave a detailed study of the linearized electro-magneto-fluid system, where more general situation of non-vanishing constant background magnetic field was considered. The authors obtained the same estimate on the solution as \eqref{U.bd}, which therefore implies the same time decay property as in the case of the Navier-Stokes system. Notice that the spectrum analysis was also made in \cite{UKS}. We finally mention two recent works Chen-Tan \cite{CT} and Zhang-Zhao \cite{ZZ} both about  the time decay rates for the nonlinear system, and Masmoudi \cite{Ma} for the study of the global existence and exponential convergence rate of regular solutions to the full magnetohydrodynamic system over two dimensional bounded domain. We remark that the current method could be applied to the model in \cite{Ma} to obtain some similar results in the framework of small perturbation.

\medskip

The third motivation is based on recent investigations on  the fluid and kinetic systems in the presence of the electromagnetic field, such as the Euler-Maxwell system with relaxation \cite{Du-EM} and the Vlasov-Maxwell-Boltzmann system \cite{DS-VMB,Du-1VMB}. For the linearized dynamics in all works \cite{Du-EM,DS-VMB,Du-1VMB} and the current work, the main idea is to construct the time-frequency Lyapunov functional to capture the dissipation of the electromagnetic field. Since the pure homogeneous Maxwell system preserves energy and hence has no dissipation, the recovered dissipation of the electromagnetic field for those coupled systems is truly nontrivial.

The optimal large time behavior of solutions near equilibriums for the Vlasov-Maxwell-Boltzmann system was proved by Duan-Strain \cite{DS-VMB} in the two-species case, but it is still unclear in the one-species case even though Duan \cite{Du-1VMB} made the full linearized analysis of the one-species Vlasov-Maxwell-Boltzmann system and proved the global existence of classical solutions under small perturbation. In fact, the one-species Vlasov-Maxwell-Boltzmann system at the linearized level shows the same time-frequency Lyapunov property as in \eqref{thm.ly.3}, that is,
\begin{equation*}
   \pa_t  \CE(\hat{U}(t,k))+c \frac{|k|^4}{(1+|k|^2)^3} \CE(\hat{U}(t,k))\leq 0.
\end{equation*}
Thus, the linearized solution operator also has the same time decay property as stated in Theorem \ref{thm.lr}. But, those rates are too slow to deduce the explicit time rate of solutions to the nonlinear system by using the usual bootstrap procedure.

Different from the case of the one-species Vlasov-Maxwell-Boltzmann system, our current study of the Navier-Stokes-Maxwell system has made full use of the pointwise estimate on the Green's function of the linearized system. Our analysis of the Green's function here strictly refines the time decay property of all the components except the magnetic field, which is hence able to be used to obtain the time decay of the nonlinear solution. For the computation of the Green's function, we use the technique of \cite{Du-EM} to write the linearized system as two decoupled system, which makes it possible to separately solve them and also shift the whole difficulty to the analysis of some ordinary differential equations. On the other hand, the estimate on the Green's function in this paper is much more complicated than that in the case of the relaxed Euler-Maxwell system \cite{Du-EM} because of the hyperbolic-parabolic character of the Navier-Stokes-Maxwell system, which results that multiple real eigenvalues could occur over the finite frequency domain. We shall make a careful analysis of the eigenvalues to overcome this difficulty. Therefore, the current method could also provide a clue to hopefully derive the optimal large time behavior of solutions to the    one-species Vlasov-Maxwell-Boltzmann system \cite{Du-1VMB}, particularly the corresponding Green's function as studied by Liu-Yu \cite{LY,LY-1D} for the pure Boltzmann equation.

Finally, it should be pointed out that all the coupled systems considered here and in \cite{Du-1VMB,Du-EM,DS-VMB} have a common feature that they are of the regularity-loss type. A typical estimate on the solution $U$ to the linearized system with this kind of regularity-loss property can be written as
\begin{equation*}
    |\hat{U}(t,k)|\leq C e^{-\phi(k) t}|\hat{U}_0(k)|
\end{equation*}
for all $t$ and $k$, where $\phi(k)$ is a strictly positive, continuous and real-valued function satisfying
$\phi(k)\to 0$ as $|k|\to 0$ and $\infty$ both with the explicit polynomial rate in $|k|$, see Lemma \ref{lem.glidecay}. The regularity-loss of the system corresponds to the property of $\phi(k)\to 0$ as $|k|\to \infty$, which is in general not true for the hyperbolic-parabolic system under the Shizuta-Kawashima condition. Therefore,  the solution over the high frequency domain does not decay in time exponentially any longer but decays polynomially with any time rate depending on the regularity of initial data.
Some recent studies of concrete models  also have shown the similar phenomenon. Among them we mention the relaxed Euler-Maxwell system in plasma physics \cite{UWK}, the Timoshenko system in biology \cite{IHK,IK} and the hyperbolic-elliptic system in radiative hydrodynamics \cite{HK,KuKa,DRZ}. Notice that  \cite{UWK} considered the case of the nonzero constant background magnetic field.

\subsection{Notations and arrangement of the paper}

Through this paper, $C$ denotes a generic positive (generally large)
constant, $c$ denotes a generic positive (generally small)
constant, and $O(1)$ denotes a generic strictly positive constant. For two quantities $A$ and $B$, $A\lesssim B$ denotes $A\leq CB$ and $A\sim B$ means $cB\leq A\leq CB$. For $u=u(x)$, the Fourier transform $\hat{u}=\hat{u}(k)$ of $u$ is defined as
\begin{equation*}
\hat{u}(k)=\CF u(k)=\int_{\R^3}e^{-ik\cdot x}u(x)dx.
\end{equation*}
For an integer $m\geq 0$, $H^m$ denotes the
Sobolev space $H^m(\R^3)$ with the usual norm $\|\cdot\|_{H^m}$.
$L^2=H^0$ with norm $\|\cdot\|$ is set when $m=0$, and
$\langle\cdot,\cdot\rangle$ denotes the inner product in $L^2$. For a multi-index
$\al=(\al_1,\al_2,\al_3)$, we denote
$\pa^{\al}=\pa_{x_1}^{\al_1}\pa_{x_2}^{\al_2}\pa_{x_3}^{\al_3}$,
and the length of $\al$ is $|\al|=\al_1+\al_2+\al_3$. Finally, $\lag
\na \rag$ is defined in terms of the Fourier transform as
$\CF \lag \na \rag=\lag \xi\rag:=(1+|\xi|^2)^{1/2}$.

The rest of the paper is arranged as follows. In Section \ref{sec2}, we study the dynamics of the linearized homogeneous Navier-Stokes-Maxwell system by proving some Lyapunov inequalities, and use the Lyapunov property of the system in terms of the Fourier transform to obtain some elementary time decay estimates on the linearized solution operator.

In Section \ref{sec3}, we explicitly solve the Green's function in the Fourier space. There, the key point is to analyze the equations of the electromagnetic field and shift the difficulty to solve a third-order ordinary equation. For the third-order ordinary equation,  we study in Section \ref{sec4} some basic properties and the explicit representation of its roots, and obtain the asymptotic behavior of roots both at the origin and infinity. In Section \ref{sec5}, we make a careful analysis of the pointwise bounds of the Green's function in the Fourier space, and use them to prove the refined time decay estimates on each component of the solution to the linearized system.

In Section \ref{sec6}, we devote ourselves to the proof of the global existence of solutions to the nonlinear Cauchy problem, and apply the obtained linearized estimates to the nonlinear case in order to prove the large time behavior of solutions and hence complete the proof of the main result Theorem \ref{thm.main}.

In Appendix, we provide a derivation of the original Navier-Stokes-Maxwell system from the kinetic Vlasov-Maxwell-Boltzmann system by using the Liu-Yang-Yu's macro-micro decomposition.

\section{The linear homogeneous dynamics}\label{sec2}

The linearized homogeneous system corresponding to \eqref{eq} and \eqref{eq.2} reads
\begin{equation}\label{leq}
    \left\{\begin{array}{l}
     \dis \pa_t n +\ga \na\cdot u =0,\\
      \dis \pa_t u+\ga \na n +\be {E}-\mu\De u=0\\
       \dis \pa_t {E} - \na \times {B} -\be u = 0,\\
      \dis \pa_t {B} + \na \times {E} = 0,\\
      \dis \na \cdot{E} = -\frac{\be}{\ga}n, \ \ \na\cdot {B} =0,\ \ t\geq 0,\ x\in \R^3.
    \end{array}\right.
\end{equation}
Here and in the sequel, for simplicity, we use the same notations $n,u,E,B$ as in \eqref{NSM} to denote the unknown functions. In terms of the Fourier transform in $x$ variable, the above system is equivalent with
\begin{equation}\label{leq.f}
    \left\{\begin{array}{l}
     \dis \pa_t \hat{n} +\ga i k\cdot \hat{u} =0,\\
      \dis \pa_t \hat{u}+\ga ik \hat{n} +\be \hat{E}+\mu |k|^2 \hat{u}=0\\
       \dis \pa_t \hat{E} - ik \times \hat{B} -\be \hat{u} = 0,\\
      \dis \pa_t \hat{B} + ik \times \hat{E} = 0,\\
      \dis ik \cdot \hat{E} = -\frac{\be}{\ga}\hat{n}, \ \ k\cdot \hat{B} =0,\ \ t\geq 0, \ k\in \R^3.
    \end{array}\right.
\end{equation}
In this section, we make some elementary Fourier analysis as in \cite{Du-EM,DS-VMB,Du-1VMB} to exploit the dissipative structure of the system \eqref{leq} or equivalently \eqref{leq.f} by showing that the system has the Lyapunov property both in the Fourier and physical spaces. The main idea is to construct the time-frequency functionals and the induced instant energy functionals that can capture the full dissipation of the system, particularly the dissipation of all the non-diffusion components in the solution. Moreover, basing on the obtained time-frequency Lyapunov inequality, we use the Hausdorff-Young inequality to study the time decay property of solutions in the standard way as in \cite{Ka2}. Those time decay estimates will be refined by further investigation of the Green's function of the linearized system in the next sections which are  the main parts of this paper.

\subsection{Lyapunov property}


First, this property is established with respect to the solution in the Fourier space in the following theorem. Throughout this subsection, we use $(a\mid b)$ to denote the complex dot product of any two complex numbers or vectors $a$ and $b$.

\begin{theorem}\label{thm.ly}
Let $U:=[n,u,E,B]$ satisfy the system \eqref{leq} for all $t\geq 0$, $x\in \R^3$. Then, there is a time-frequency functional $\CE(\cdot)$ such that
\begin{equation}\label{thm.ly.1}
    \CE(\hat{U}(t,k))\sim |\hat{U}(t,k)|^2:=|\hat{n}(t,k)|^2+|\hat{u}(t,k)|^2+|\hat{E}(t,k)|^2+|\hat{B}(t,k)|^2
\end{equation}
and
\begin{multline}\label{thm.ly.2}
\dis \pa_t  \CE(\hat{U}(t,k))+c|\hat{n}(t,k)|^2+c|k|^2 |\hat{u}(t,k)|^2 \\ +c\frac{|k|^2}{(1+|k|^2)^2}|\hat{E}(t,k)|^2+c\frac{|k|^4}{(1+|k|^2)^3}|\hat{B}(t,k)|^2 \leq 0
\end{multline}
for all $t\geq 0$, $k\in \R^3$. In particular,
\begin{equation}\label{thm.ly.3}
   \pa_t  \CE(\hat{U}(t,k))+c \frac{|k|^4}{(1+|k|^2)^3} \CE(\hat{U}(t,k))\leq 0
\end{equation}
for all $t\geq 0$, $k\in \R^3$.
\end{theorem}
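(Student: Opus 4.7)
I would first test \eqref{leq.f} against $(\hat n,\hat u,\hat E,\hat B)$ in the complex inner product, sum, and take real parts. The three skew-symmetric coupling pairs---between $\hat n$ and $\hat u$ via the pressure gradient, between $\hat u$ and $\hat E$ via the Lorentz force, and between $\hat E$ and $\hat B$ via the Maxwell curls---cancel pairwise, yielding
\begin{equation*}
\tfrac12\pa_t|\hat U|^2+\mu|k|^2|\hat u|^2=0.
\end{equation*}
This provides the parabolic dissipation of $\hat u$ but nothing on $\hat n,\hat E,\hat B$; the dissipation for these three components must be generated through suitable cross-interaction functionals.

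\textbf{Step 2 (three interaction functionals).} For $\hat n$, I would introduce $\CI_n=\text{Re}(\hat u\mid ik\hat n)/(1+|k|^2)$. Using the continuity and momentum equations and eliminating $ik\cdot\hat E$ by the elliptic constraint $ik\cdot\hat E=-(\be/\ga)\hat n$, the two leading contributions $-\ga|k|^2|\hat n|^2$ and $-(\be^2/\ga)|\hat n|^2$ combine (after division by $1+|k|^2$) into a dissipation $-c|\hat n|^2$ that is uniform in $k$. For $\hat E$, take $\CI_E=g_E(k)\,\text{Re}(\hat u\mid\hat E)$ with $g_E(k)=|k|^2/(1+|k|^2)^2$; the $\be\hat E\leftrightarrow-\be\hat u$ coupling produces the desired $-\be g_E(k)|\hat E|^2$ at the cost of $+\be g_E(k)|\hat u|^2\leq\be|k|^2|\hat u|^2$, which the basic parabolic dissipation absorbs. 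For $\hat B$, take $\CI_B=-w_B(k)\,\text{Re}(\hat E\mid ik\times\hat B)$ with $w_B(k)=|k|^2/(1+|k|^2)^3$. Using $k\cdot\hat B=0$ (so that $|ik\times\hat B|^2=|k|^2|\hat B|^2$) together with the identity $|k|^2|\hat E|^2-|k\cdot\hat E|^2=|k\times\hat E|^2$, the computation produces the target dissipation $-c|k|^4/(1+|k|^2)^3|\hat B|^2$ and a crosstalk $w_B|k\times\hat E|^2\leq|k|^4/(1+|k|^2)^3|\hat E|^2\leq g_E(k)|\hat E|^2$ (since $1+|k|^2\geq|k|^2$), which the $\hat E$-dissipation absorbs.

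\textbf{Step 3 (assembly and derivation of \eqref{thm.ly.1}--\eqref{thm.ly.3}).} My proposed functional is
\begin{equation*}
\CE(\hat U)=\tfrac12|\hat U|^2+\ka_1\CI_n+\ka_2\CI_E+\ka_3\CI_B,
\end{equation*}
with parameters chosen hierarchically, $0<\ka_3\ll\ka_2\ll\ka_1\ll 1$. Each $\CI_\star$ carries a uniformly bounded frequency weight (for instance $|k|/(1+|k|^2)\leq 1/2$), so $|\ka_j\CI_\star|\leq(\ka_j/4)|\hat U|^2$, delivering the equivalence \eqref{thm.ly.1}. Summing the computations of Step 2 and applying Young's inequality to all cross terms, every error is absorbed into the dissipation of a previously treated component, yielding \eqref{thm.ly.2}. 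Finally, \eqref{thm.ly.3} follows at once from \eqref{thm.ly.2} and \eqref{thm.ly.1}, since each of the four weights $\{1,\,|k|^2,\,|k|^2/(1+|k|^2)^2,\,|k|^4/(1+|k|^2)^3\}$ is pointwise bounded below by $|k|^4/(1+|k|^2)^3$.

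\textbf{Main obstacle.} The hard part will be the design of the $\hat B$ functional. A naive weight such as $1/(1+|k|^2)$ in $\CI_B$ would generate a crosstalk of order $|\hat E|^2$ at high frequencies, which no $|\hat E|^2$ dissipation available in the system can absorb. This is precisely what forces the sub-quadratic weight $|k|^2/(1+|k|^2)^3$ and, in turn, caps the $\hat B$ dissipation at the rate $|k|^4/(1+|k|^2)^3$. This high-frequency degeneracy is the regularity-loss feature that later reappears in \eqref{est.G1}--\eqref{est.G2} and governs the slow $\|\widetilde B(t)\|\lesssim(1+t)^{-3/8}$ decay in Theorem \ref{thm.main}.
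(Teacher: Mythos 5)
Your proposal is essentially the paper's own proof: the same basic energy identity, the same three interaction functionals $\MRk(i\ga k\hat n\mid\hat u)/(1+|k|^2)$, $|k|^2\MRk(\hat E\mid\hat u)/(1+|k|^2)^2$, and $|k|^2\MRk(-ik\times\hat B\mid\hat E)/(1+|k|^2)^3$ with the same frequency weights, and the same hierarchical choice of small coefficients (the paper uses $\kappa_1$ for the $n$- and $E$-functionals and $\kappa_1\kappa_2$ for the $B$-functional, but this is the same hierarchy as your $\ka_1\gg\ka_2\gg\ka_3$). Your diagnosis of the high-frequency degeneracy forcing the sub-quadratic weight on the $B$-functional is also exactly the regularity-loss mechanism the paper emphasizes.
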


\begin{proof}
The Fourier transform representation \eqref{leq.f} of system \eqref{leq} can be written as
\begin{multline*}
\pa_t \left(\begin{array}{c}
        \hat{n}\\
        \hat{u}\\
        \hat{E}\\
        \hat{B}
      \end{array}\right)
+\left(\begin{array}{cccc}
        0 & \ga ik^T & 0 & 0\\
        \ga i k & 0 & 0 & 0\\
        0 & 0 & 0 & -ik\times \\
        0 & 0 & ik\times & 0
      \end{array}\right)
\left(\begin{array}{c}
        \hat{n}\\
        \hat{u}\\
        \hat{E}\\
        \hat{B}
      \end{array}\right)\\
+\left(\begin{array}{cccc}
        0 & 0 & 0 & 0\\
        0 & 0 & \be & 0\\
        0 & -\be & 0 &0 \\
        0 & 0 & 0 & 0
      \end{array}\right)
\left(\begin{array}{c}
        \hat{n}\\
        \hat{u}\\
        \hat{E}\\
        \hat{B}
      \end{array}\right)
+\left(\begin{array}{cccc}
        0 & 0 & 0 & 0\\
        0 & \mu|k|^2 & 0 & 0\\
        0 & 0 & 0 &0 \\
        0 & 0 & 0 & 0
      \end{array}\right)
\left(\begin{array}{c}
        \hat{n}\\
        \hat{u}\\
        \hat{E}\\
        \hat{B}
      \end{array}\right)=0.
\end{multline*}
Since both the coefficient matrices of the first-order hyperbolic part and the zero-order relaxation part are skew-symmetric, the direct computation gives
\begin{equation}\label{thm.ly.p00}
    \frac{1}{2}\pa_t |[\hat{n},\hat{u},\hat{E},\hat{B}]|^2+\mu |k|^2 |\hat{u}|^2=0.
\end{equation}
From the identity
\begin{equation*}
    (i\ga k \hat{n}\mid \ga ik \hat{n} +\be \hat{E})=(\ga^2 |k|^2+\be^2)|\hat{n}|^2,
\end{equation*}
and also using the second and first equations of \eqref{leq.f} to deduce
\begin{eqnarray*}
  (i\ga k \hat{n}\mid \ga ik \hat{n} +\be \hat{E}) &=&  (i\ga k \hat{n}\mid -\pa_t \hat{u}-\mu |k|^2 \hat{u})\\
  &=&-\pa_t (i\ga k \hat{n}\mid  \hat{u})+(i\ga k \pa_t\hat{n}\mid  \hat{u})-\ga\mu (i k \hat{n}\mid|k|^2 \hat{u})\\
   &=&-\pa_t (i\ga k \hat{n}\mid  \hat{u})+\ga^2|k\cdot \hat{u}|^2-\ga\mu (i k \hat{n}\mid|k|^2 \hat{u}),
\end{eqnarray*}
one has
\begin{equation*}
  \pa_t (i\ga k \hat{n}\mid  \hat{u})+ (\ga^2 |k|^2+\be^2)|\hat{n}|^2= \ga^2|k\cdot \hat{u}|^2-\ga\mu (i k \hat{n}\mid|k|^2 \hat{u}),
\end{equation*}
which  after further taking the real part, using the Cauchy inequality for the last two terms and then dividing by $1+|k|^2$, implies
\begin{equation}\label{thm.ly.p01}
  \pa_t \frac{\MRk (i\ga k \hat{n}\mid  \hat{u})}{1+|k|^2}+ c|\hat{n}|^2\leq C|k|^2|\hat{u}|^2.
\end{equation}
In the similar way, from the identity
\begin{equation*}
    (\hat{E}\mid \ga ik \hat{n} +\be \hat{E})=\be|\hat{E}|^2+\frac{\ga^2}{\be}|k\cdot \hat{E}|^2,
\end{equation*}
and also using the second and third equations  of \eqref{leq.f} to derive
\begin{eqnarray*}
  (\hat{E}\mid \ga ik \hat{n} +\be \hat{E}) &=& (\hat{E}\mid  -\pa_t \hat{u}-\mu |k|^2 \hat{u})\\
 &=& -\pa_t(\hat{E}\mid \hat{u})+(\pa_t\hat{E}\mid \hat{u})-\mu|k|^2(\hat{E}\mid \hat{u})\\
  &=& -\pa_t(\hat{E}\mid \hat{u})+(ik \times \hat{B} +\be \hat{u}\mid \hat{u})-\mu|k|^2(\hat{E}\mid \hat{u}),
\end{eqnarray*}
one has
\begin{equation}\label{thm.ly.p1}
 \pa_t(\hat{E}\mid \hat{u})+ \be|\hat{E}|^2+\frac{\ga^2}{\be}|k\cdot \hat{E}|^2=\be|\hat{u}|^2
 +(ik \times \hat{B}\mid \hat{u})-\mu|k|^2(\hat{E}\mid \hat{u}).
\end{equation}
Moreover, from the fourth and third equations of \eqref{leq.f},
\begin{eqnarray*}
  |k|^2|\hat{B}|^2 &=& |k\times \hat{B}|^2\\
  &=&(ik\times \hat{B}\mid ik\times \hat{B}) \\
  &=&(ik\times \hat{B}\mid \pa_t \hat{E} -\be \hat{u} )\\
  &=&\pa_t (ik\times \hat{B}\mid \hat{E} )-(ik\times \pa_t\hat{B}\mid \hat{E} )-\be (ik\times \hat{B}\mid \hat{u} )\\
  &=&\pa_t (ik\times \hat{B}\mid \hat{E} )+|k\times \hat{E}|^2-\be (ik\times \hat{B}\mid \hat{u} ),
\end{eqnarray*}
that is,
\begin{equation}\label{thm.ly.p2}
  -\pa_t (ik\times \hat{B}\mid \hat{E} )  +|k|^2|\hat{B}|^2=|k\times \hat{E}|^2-\be (ik\times \hat{B}\mid \hat{u} ).
\end{equation}
We go on making estimates from the obtained identities \eqref{thm.ly.p1} and \eqref{thm.ly.p2}.  By taking the real part of \eqref{thm.ly.p1}, using the Cauchy inequality for its right-hand last term and then multiplying it by $|k|^2/(1+|k|^2)^2$, it follows
\begin{multline}\label{thm.ly.p3}
    \pa_t \frac{|k|^2\MRk (\hat{E}\mid \hat{u}) }{(1+|k|^2)^2}+c \frac{|k|^2}{(1+|k|^2)^2}|\hat{E}|^2\\
\leq \frac{\be |k|^2}{(1+|k|^2)^2}|\hat{u}|^2 +\frac{C |k|^6}{(1+|k|^2)^2}|\hat{u}|^2+ \frac{|k|^2\MRk (ik \times \hat{B}\mid \hat{u})}{(1+|k|^2)^2}\\
\leq C|k|^2|\hat{u}|^2+ \frac{|k|^2\MRk (ik \times \hat{B}\mid \hat{u})}{(1+|k|^2)^2}.
\end{multline}
On the other hand, from taking the real part of  \eqref{thm.ly.p2}, using the Cauchy inequality for its right-hand last term and then multiplying it by $|k|^2/(1+|k|^2)^3$, it follows
\begin{multline}\label{thm.ly.p4}
  \pa_t \frac{|k|^2\MRk (-ik\times \hat{B}\mid \hat{E} )}{(1+|k|^2)^3} +c \frac{|k|^4}{(1+|k|^2)^3}|\hat{B}|^2\\
  \leq  \frac{|k|^2}{(1+|k|^2)^3}|k\times \hat{E}|^2+C\frac{|k|^2}{(1+|k|^2)^3}|\hat{u}|^2
  \leq  \frac{|k|^2}{(1+|k|^2)^2}|\hat{E}|^2+C|k|^2|\hat{u}|^2.
\end{multline}
Define
\begin{multline}\label{def.CEf}
    \CE(\hat{U})=|[\hat{n},\hat{u},\hat{E},\hat{B}]|^2\\
    +\kappa_1  \frac{\MRk (i\ga k \hat{n}\mid  \hat{u})}{1+|k|^2} +\kappa_1\frac{|k|^2\MRk (\hat{E}\mid \hat{u}) }{(1+|k|^2)^2}+\kappa_1\kappa_2 \frac{|k|^2\Re (-ik\times \hat{B}\mid \hat{E} )}{(1+|k|^2)^3}
\end{multline}
with $0<\kappa_1, \kappa_2\ll 1$ to be determined so as to  guarantee that $\CE(\cdot)$ is the desired time-frequency functional satisfying  \eqref{thm.ly.1} and \eqref{thm.ly.2}. Notice that as long as $0<\kappa_1, \kappa_2\ll 1$,
\begin{equation*}
    \CE(\hat{U})\sim |[\hat{n},\hat{u},\hat{E},\hat{B}]|^2,
\end{equation*}
and hence \eqref{thm.ly.1} holds true.
One can choose $\kappa_2>0$ further small enough such that the sum of \eqref{thm.ly.p3} and \eqref{thm.ly.p4} multiplied by $\kappa_2$ gives
\begin{multline}\label{thm.ly.p5}
\pa_t\left[ \frac{|k|^2\MRk (\hat{E}\mid \hat{u}) }{(1+|k|^2)^2}+\kappa_2\frac{|k|^2\MRk (-ik\times \hat{B}\mid \hat{E} )}{(1+|k|^2)^3}\right]
+(c-\kappa_2) \frac{|k|^2}{(1+|k|^2)^2}|\hat{E}|^2\\
+c\kappa_2 \frac{|k|^4}{(1+|k|^2)^3}|\hat{B}|^2
\leq \frac{|k|^2\MRk (ik \times \hat{B}\mid \hat{u})}{(1+|k|^2)^2}+C|k|^2|\hat{u}|^2.
\end{multline}
Since
\begin{multline*}
\frac{|k|^2\MRk (ik \times \hat{B}\mid \hat{u})}{(1+|k|^2)^2}\leq \eps   \frac{|k|^4}{(1+|k|^2)^3}|\hat{B}|^2+\frac{1}{4\eps}\frac{|k|^2}{1+|k|^2}|\hat{u}|^2\\
\leq \eps   \frac{|k|^4}{(1+|k|^2)^3}|\hat{B}|^2+\frac{1}{4\eps}|k|^2|\hat{u}|^2
\end{multline*}
holds for any $\eps>0$ by the Cauchy inequality, it follows from \eqref{thm.ly.p5} that
\begin{multline}\label{thm.ly.p6}
\pa_t\left[ \frac{|k|^2\MRk (\hat{E}\mid \hat{u}) }{(1+|k|^2)^2}
+\kappa_2\frac{|k|^2\MRk (-ik\times \hat{B}\mid \hat{E} )}{(1+|k|^2)^3}\right]\\
+(c-\kappa_2) \frac{|k|^2}{(1+|k|^2)^2}|\hat{E}|^2
+c\kappa_2 \frac{|k|^4}{(1+|k|^2)^3}|\hat{B}|^2
\leq \frac{C}{\kappa_2}|k|^2|\hat{u}|^2.
\end{multline}
We now let $\kappa_2$ be fixed. By multiplying \eqref{thm.ly.p01}, \eqref{thm.ly.p6} by $\kappa_1>0$ further chosen small enough and then adding them to \eqref{thm.ly.p00}, \eqref{thm.ly.2} follows.
\eqref{thm.ly.3} is the immediate consequence of \eqref{thm.ly.2}. The proof of Theorem \ref{thm.ly} is complete.
\end{proof}

By taking integration over $k\in \R^3$ for those time-frequency inequalities in Theorem \ref{thm.ly}, it is straightforward to obtain the corresponding estimates on the solution in the physical space, which describe the evolution of instant energy functionals with the optimal energy dissipation rates. Precisely, we have

\begin{corollary}\label{cor.lya}
Let $U:=[n,u,E,B]$ satisfy the system \eqref{leq} for all $t\geq 0$, $x\in \R^3$. Let the time-frequency functional $\CE(\cdot)$ be obtained in Theorem \ref{thm.ly}. Then, for any integer $j\geq 0$,
\begin{multline*}
\frac{d}{dt}\int_{\R^3}\CE(|k|^j\hat{U}(t,k))dk
+ c \|\na^j n\|^2+c\|\na^{j+1}u\|^2\\
+c\|\na^{j+1}\lng \na \rng^{-2} E \|^2+c\|\na^{j+2}\lng \na\rng^{-3} B\|^2\leq 0
\end{multline*}
for any $t\geq 0$, where
\begin{equation*}
  \int_{\R^3}\CE(|k|^j\hat{U}(t,k))dk\sim \|\na^j U(t)\|^2.
\end{equation*}
Particularly, for any integer $N\geq 3$,
\begin{multline*}
\frac{d}{dt}\sum_{j=0}^N\int_{\R^3}\CE(|k|^j\hat{U}(t,k))dk
+ c \|n\|_{H^N}^2+c\|\na u\|_{H^N}^2\\
+c\|\na E \|_{H^{N-2}}^2+c\|\na^{2}B\|_{H^{N-3}}^2\leq 0
\end{multline*}
for any $t\geq 0$, where
\begin{equation*}
  \sum_{j=0}^N\int_{\R^3}\CE(|k|^j\hat{U}(t,k))dk\sim \|U(t)\|_{H^N}^2.
\end{equation*}
\end{corollary}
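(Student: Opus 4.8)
The plan is to derive Corollary \ref{cor.lya} directly from Theorem \ref{thm.ly} by inserting the frequency weight $|k|^{2j}$ into the pointwise inequality \eqref{thm.ly.2}, integrating over $k\in\R^3$, and translating back to the physical space by Plancherel's theorem.

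First I would observe that the functional $\CE$ defined in \eqref{def.CEf} is a quadratic form in $\hat U$ whose coefficients depend only on $k$; consequently, replacing $\hat U$ by $|k|^j\hat U$ multiplies $\CE$ by $|k|^{2j}$, so that $\CE(|k|^j\hat U(t,k)) = |k|^{2j}\CE(\hat U(t,k))$, and \eqref{thm.ly.1} upgrades to the pointwise equivalence $\CE(|k|^j\hat U(t,k)) \sim |k|^{2j}|\hat U(t,k)|^2$. Multiplying \eqref{thm.ly.2} by $|k|^{2j}$ gives
\begin{equation*}
\pa_t\big(|k|^{2j}\CE(\hat U)\big) + c|k|^{2j}|\hat n|^2 + c|k|^{2j+2}|\hat u|^2 + c\frac{|k|^{2j+2}}{(1+|k|^2)^2}|\hat E|^2 + c\frac{|k|^{2j+4}}{(1+|k|^2)^3}|\hat B|^2 \le 0 .
\end{equation*}
Integrating this in $k$, exchanging $\pa_t$ with $\int dk$, and recalling $\CF\lng\na\rng^{-s} = (1+|\xi|^2)^{-s/2}$, Plancherel identifies the four dissipative integrals with $\|\na^j n\|^2$, $\|\na^{j+1}u\|^2$, $\|\na^{j+1}\lng\na\rng^{-2}E\|^2$ and $\|\na^{j+2}\lng\na\rng^{-3}B\|^2$; integrating the pointwise equivalence yields $\int_{\R^3}\CE(|k|^j\hat U)dk \sim \int_{\R^3}|k|^{2j}|\hat U|^2dk = \|\na^j U\|^2$. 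This is the first displayed inequality.

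For the summed statement I would sum over $0\le j\le N$ and use the elementary equivalence $\sum_{j=0}^N |k|^{2j}\sim(1+|k|^2)^N$ for $k\in\R^3$ (the left side is comparable to $1$ near the origin and to $|k|^{2N}$ at infinity); this follows from $(1+|k|^2)^N=\sum_{j=0}^N\binom{N}{j}|k|^{2j}$ together with $(1+|k|^2)^N\le 2^N(1+|k|^{2N})\le 2^N\sum_{j=0}^N|k|^{2j}$ for $N\ge 1$. Then the density dissipation becomes $\sim\int(1+|k|^2)^N|\hat n|^2dk = \|n\|_{H^N}^2$, the velocity one $\sim\int(1+|k|^2)^N|k|^2|\hat u|^2dk = \|\na u\|_{H^N}^2$, the electric one $\sim\int(1+|k|^2)^{N-2}|k|^2|\hat E|^2dk = \|\na E\|_{H^{N-2}}^2$, and the magnetic one $\sim\int(1+|k|^2)^{N-3}|k|^4|\hat B|^2dk = \|\na^2 B\|_{H^{N-3}}^2$; the same weight computation gives $\sum_{j=0}^N\int_{\R^3}\CE(|k|^j\hat U)dk\sim\|U\|_{H^N}^2$. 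The hypothesis $N\ge 3$ enters precisely here, so that the Sobolev exponents $N-2$ and $N-3$ attached to $E$ and $B$ are nonnegative.

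There is no genuine analytical obstacle once Theorem \ref{thm.ly} is in hand: the corollary is essentially bookkeeping with Fourier multipliers and Plancherel. The only point deserving care is the interaction between the polynomial weight $\sum_{j=0}^N|k|^{2j}$ and the rational weights $|k|^2/(1+|k|^2)^2$ and $|k|^4/(1+|k|^2)^3$ carried by the $E$- and $B$-dissipations; this is what produces the loss of two and three derivatives, respectively, in the electromagnetic dissipation and fixes the threshold $N\ge 3$ in the summed estimate.
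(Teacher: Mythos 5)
Your proposal is correct and follows exactly the route the paper intends. The paper disposes of Corollary~\ref{cor.lya} with a one-line remark (``by taking integration over $k\in\R^3$ for those time-frequency inequalities in Theorem~\ref{thm.ly}, it is straightforward to obtain\ldots''), and your write-up simply supplies the bookkeeping: the observation that $\CE$ from \eqref{def.CEf} is quadratic with $k$-dependent coefficients, so $\CE(|k|^j\hat U)=|k|^{2j}\CE(\hat U)$, then multiplication of \eqref{thm.ly.2} by $|k|^{2j}$, integration in $k$ with Plancherel, and the weight equivalence $\sum_{j=0}^N|k|^{2j}\sim(1+|k|^2)^N$ to pass to the $H^N$ form, with $N\ge 3$ ensuring the exponents $N-2$ and $N-3$ stay nonnegative.
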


Theorem \ref{thm.ly} and Corollary \ref{cor.lya} exactly show the dissipative structure of the linearized Navier-Stokes-Maxwell system \eqref{leq}. The importance of this structure is that it exposes the dissipative property of the hyperbolic Maxwell equations in the full system, and also it plays a key role in the study of the nonlinear asymptotic stability of the constant steady state under small perturbations; see Theorem \ref{thm.ge} and its proof.


\subsection{Time-decay property}

As in \cite{Du-EM,DS-VMB,Du-1VMB}, it is now a standard procedure to derive from Theorem \ref{thm.ly} the $L^p$-$L^q$ time decay property of solutions to the linearized system \eqref{leq}. Here, we consider it by obtaining the following lemma under a more general situation as in \cite{Du-1VMB}, and also give the proof of the lemma both for completeness and for the later use in the proof of Theorem \ref{thm.tdp}.   To do that, for $\ell\in \R$ and $1\leq s\leq 2\leq q\leq \infty$, define
\begin{equation*}
    m(\ell,s,q)=
\left\{\begin{array}{ll}
  0 &\ \ \ \text{if}\ \ \ell+3(\frac{1}{s}-\frac{1}{q})<0,\\[2mm]
\ell &\ \ \ \text{if}\ \  \ell+3(\frac{1}{s}-\frac{1}{q})\geq 0, s=q=2,\\[2mm]
&\quad\quad\ \ \ \ \ \ \ \ \text{and}\ \ell\ \text{is integer},\\[2mm]
{[}\ell+3(\frac{1}{s}-\frac{1}{q})]+1&\ \ \ \text{otherwise},
\end{array}\right.
\end{equation*}
where $[\cdot]$ means the integer part of the nonnegative argument.

\begin{lemma}\label{lem.glidecay}
Assume that for any initial data $U_0$, the linear homogeneous
solution $U(t)=\semiG(t)U_0$ obeys the pointwise estimate
\begin{equation*}
    |\hat{U}(t,k)|\leq C e^{-\phi(k) t}|\hat{U}_0(k)|
\end{equation*}
for all $t\geq 0$, $k\in \R^3$, where $\phi(k)$ is a strictly positive, continuous and real-valued function over $k\in \R^3$ and satisfies
\begin{equation*}
    \phi(k)= \left\{
    \begin{array}{ll}
      O(1)|k|^{\si_+} & \ \ \text{as $|k|\to 0$},\\[3mm]
      O(1)|k|^{-\si_-} & \ \ \text{as $|k|\to \infty$},
    \end{array}\right.
\end{equation*}
for two constants $\si_->\si_+>0$. Let $j\geq 0$ be an integer, $1\leq p,s\leq 2\leq q\leq \infty$ and $\ell\geq 0$. Then, $U(t)=\semiG(t)U_0$ obeys the time-decay estimate
\begin{multline}\label{lem.glidecay.3}
 \|\na^j U(t)\|_{L^q}
\leq
C(1+t)^{-\frac{3}{\si_+}(\frac{1}{p}-\frac{1}{q})-\frac{j}{\si_+}}
\|U_0\|_{L^p}\\
 +C(1+t)^{-\frac{\ell}{\si_-}} \|\na^{m(j+\ell,s,q)}U_0\|_{L^s},
\end{multline}
for any $t\geq 0$.
\end{lemma}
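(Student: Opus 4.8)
The strategy is the classical Fourier-splitting argument of Kawashima. First I would write $U(t) = \semiG(t)U_0$ via the Fourier representation, so that
$$\|\na^j U(t)\|_{L^q} \le C\big\|\,|k|^j e^{-\phi(k)t}\hat U_0(k)\,\big\|_{L^{q'}_k},$$
using the Hausdorff-Young inequality (valid since $2\le q\le \infty$, $q'$ the conjugate), and then split the frequency domain into the low-frequency region $|k|\le 1$ and the high-frequency region $|k|\ge 1$, writing $U = U_L + U_H$ accordingly. The two regions are governed by the two different asymptotics of $\phi$: near the origin $\phi(k)\sim O(1)|k|^{\si_+}$, and near infinity $\phi(k)\sim O(1)|k|^{-\si_-}$; on the intermediate compact annulus $\phi$ is bounded below by a positive constant, so that part can be absorbed into either piece with exponential-in-$t$ decay.

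For the low-frequency part, I would apply Hölder's inequality in $k$ with the pair $(\frac{q'}{p'}\cdot)$ — more precisely, interpolate so as to extract $\|\hat U_0\|_{L^{p'}}\le \|U_0\|_{L^p}$ (again Hausdorff-Young, now with $1\le p\le 2$), leaving a factor $\big\|\,|k|^j e^{-\phi(k)t}\,\big\|_{L^r_k(|k|\le1)}$ where $r$ is determined by $\frac1{q'}=\frac1{p'}+\frac1r$, i.e. $\frac1r = \frac1p-\frac1q$. Using $\phi(k)\gtrsim |k|^{\si_+}$ on $|k|\le1$ and the standard scaling estimate $\int_{\R^3}|k|^{jr}e^{-c|k|^{\si_+}t}\,dk \lesssim (1+t)^{-\frac{3+jr}{\si_+}}$, one gets the bound $(1+t)^{-\frac{3}{\si_+}(\frac1p-\frac1q)-\frac{j}{\si_+}}\|U_0\|_{L^p}$, which is the first term on the right of \eqref{lem.glidecay.3}. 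For the high-frequency part, since $\phi(k)\to 0$ only polynomially as $|k|\to\infty$, there is no exponential decay; instead I would trade regularity for decay: write $|k|^j e^{-\phi(k)t} = |k|^{j+\ell}\cdot |k|^{-\ell}e^{-\phi(k)t}$, bound $|k|^{-\ell}e^{-\phi(k)t}\lesssim |k|^{-\ell}e^{-c|k|^{-\si_-}t}$, and optimize: $\sup_{|k|\ge1}|k|^{-\ell}e^{-c|k|^{-\si_-}t}\lesssim (1+t)^{-\ell/\si_-}$ (the sup of $\lambda^{\ell/\si_-}e^{-c\lambda t}$ over $\lambda=|k|^{-\si_-}\le 1$). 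This pulls out the factor $(1+t)^{-\ell/\si_-}$, leaving $\big\|\,|k|^{j+\ell}\hat U_0\,\big\|_{L^{q'}_k(|k|\ge1)}$, which by Hausdorff-Young and the definition of $m(j+\ell,s,q)$ is controlled by $\|\na^{m(j+\ell,s,q)}U_0\|_{L^s}$ — the exponent $m$ being exactly engineered so that $|k|^{j+\ell}$ is dominated by $\langle k\rangle^{m}$ in the appropriate $L^{q'}$-over-$L^{s'}$ Hölder estimate (with the subtleties in the three cases of the definition of $m$ coming from whether $j+\ell+3(\frac1s-\frac1q)$ is negative, integral with $s=q=2$, or otherwise).

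The main obstacle is the high-frequency estimate: getting the bookkeeping of $m(j+\ell,s,q)$ exactly right. One must be careful that when $j+\ell+3(\frac1s-\frac1q)<0$ no derivative is needed (the weight $|k|^{j+\ell}$ is already $L^{r}$-integrable near infinity against $\hat U_0\in L^{s'}$), that in the borderline Hilbert case $s=q=2$ one gets precisely $\ell$ (not $\ell+1$) derivatives since no Hausdorff-Young loss occurs, and that otherwise one rounds up via the integer part. I would also need to check that the two contributions combine cleanly — in particular that the intermediate annulus $\eps\le|k|\le L$ contributes only an $e^{-ct}$ term which is dominated by both polynomial factors — and that the case $q=\infty$ (where $q'=1$) is handled by the same Hausdorff-Young/Hölder scheme. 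None of this is deep, but the case analysis in the definition of $m$ is where the argument has to be written with care.
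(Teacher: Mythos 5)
Your proposal is correct and follows essentially the same Hausdorff–Young plus low/high frequency splitting used in the paper: Hölder against $\|\hat U_0\|_{L^{p'}}$ and a heat-kernel scaling bound in the low-frequency region, and pulling out the factor $\sup|k|^{-\ell}e^{-c|k|^{-\si_-}t}\lesssim(1+t)^{-\ell/\si_-}$ followed by Hölder and Hausdorff–Young to land on $\|\na^{m(j+\ell,s,q)}U_0\|_{L^s}$ in the high-frequency region. The only cosmetic deviation is your explicit mention of absorbing an intermediate compact annulus, which the paper sidesteps by choosing a single threshold $L$ and noting that continuity and strict positivity of $\phi$ give $\phi(k)\gtrsim|k|^{\si_+}$ on all of $\{|k|\le L\}$ and $\phi(k)\gtrsim|k|^{-\si_-}$ on $\{|k|\ge L\}$.
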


\begin{proof}
Take a constant $L>0$. From the assumptions on $\phi(k)$, it is easy to see
\begin{equation*}
    \phi(k)\geq \left\{
    \begin{array}{ll}
      c |k|^{\si_+} & \ \ \text{if $|k|\leq L$},\\[3mm]
      c |k|^{-\si_-} & \ \ \text{if $|k|\geq L$}.
    \end{array}\right.
\end{equation*}
Take $2\leq q\leq \infty$ and an integer $j\geq 0$. By Hausdorff-Young inequality,
\begin{multline}\label{lem.glidecay.p1}
\|\na_x^j U(t)\|_{L^q}\leq C\left\||k|^j e^{-\phi(k)t}\hat{U}_0\right\|_{L^{q'}}\\
\leq C\left\||k|^j e^{-\la |k|^{\si_+}t}\hat{U}_0\right\|_{L^{q'}(|k|\leq L)}
+C \left\||k|^j e^{-\la |k|^{-\si_-}t}\hat{U}_0\right\|_{L^{q'}(|k|\geq L)}:=I_1+I_2,
\end{multline}
where $\frac{1}{q}+\frac{1}{q'}=1$. For $I_1$,
take $1\leq p\leq 2$ and by using the H\"{o}lder inequality for
$\frac{1}{q'}=\frac{p'-q'}{p'q'}+\frac{1}{p'}$ with $p'$ given by
$\frac{1}{p}+\frac{1}{p'}=1$,
\begin{equation*}
    I_1\leq C\left\||k|^j e^{-\la |k|^{\si_+}t}\right\|_{L^{\frac{p'q'}{p'-q'}}(|k|\leq L)}
    \|\hat{U}_0\|_{L^{p'}(|k|\leq
    L)}.
\end{equation*}
Here, the right-hand first factor can be estimated in a standard way \cite{Ka2}
as
\begin{equation*}
\left\||k|^j e^{-\la
|k|^{\si_+}t}\right\|_{L^{\frac{p'q'}{p'-q'}}(|k|\leq L)}\leq C
(1+t)^{-\frac{3}{\si_+}(\frac{1}{p}-\frac{1}{q})-\frac{j}{\si_+}}
\end{equation*}
by using change of variable $kt^{\frac{1}{\si_+}}\to k$, and the
right-hand second factor is estimated by Hausdorff-Young
inequalities as
\begin{equation*}
\|\hat{U}_0\|_{L^{p'}(|k|\leq L)}
\leq\|\hat{U}_0\|_{L^{p'}}
\leq C\|U_0\|_{L^p}.
\end{equation*}
Therefore, for $I_1$, one has
\begin{equation*}
I_1\leq
C(1+t)^{-\frac{3}{\si_+}(\frac{1}{p}-\frac{1}{q})-\frac{j}{\si_+}}\|U_0\|_{L^p}.
\end{equation*}
To estimate $I_2$, take a constant $\ell\geq 0$ so that
\begin{equation*}
    I_2=C \left\||k|^j e^{-\la |k|^{-\si_-}t}\hat{U}_0\right\|_{L^{q'}(|k|\geq L)}
    \leq C\sup_{|k|\geq L}|k|^{-\ell} e^{-c |k|^{-\si_-}t}
    \left\||k|^{j+\ell} \hat{U}_0\right\|_{L^{q'}(|k|\geq L)}.
\end{equation*}
Here, the right-hand first factor decays in time as
\begin{equation*}
\sup_{|k|\geq L}|k|^{-\ell} e^{-c |k|^{-\si_-}t}\leq
C(1+t)^{-\frac{\ell}{\si_-}}.
\end{equation*}
We estimate the right-hand second factor as follows. Take $1\leq
s\leq 2$ with $\frac{1}{s}+\frac{1}{s'}=1$ and take a constant
$\eps>0$ small enough. Then, similarly as before, from
H\"{o}lder inequality for
$\frac{1}{q'}=\frac{s'-q'}{s'q'}+\frac{1}{s'}$, one has
\begin{multline*}
 \left\||k|^{j+\ell} \hat{U}_0\right\|_{L^{q'}(|k|\geq L)}\\
  \leq \left\||k|^{-3(1+\eps)\frac{s'-q'}{s'q'}}\right\|_{L^{\frac{s'q'}{s'-q'}}(|k|\geq
  L)}\left\||k|^{j+\ell+3(1+\eps)\frac{s'-q'}{s'q'}}
  \hat{U}_0\right\|_{{L^{s'}(|k|\geq
  L)}}\\
  \leq C_\eps\left\||k|^{j+[\ell+3(\frac{1}{s}-\frac{1}{q})]_+}
 \hat{U}_0\right\|_{{L^{s'}(|k|\geq
  L)}}.
\end{multline*}
Here, due to $s'\geq 2$, further by Hausdorff-Young inequality,
\begin{multline*}
\left\||k|^{j+[\ell+3(\frac{1}{s}-\frac{1}{q})]_+}
 \hat{U}_0\right\|_{{L^{s'}(|k|\geq
  L)}}\leq \left\||k|^{j+[\ell+3(\frac{1}{s}-\frac{1}{q})]_+}
 \hat{U}_0\right\|_{{L^{s'}}}\\
\leq
  C\|\na_x^{j+[\ell+3(\frac{1}{s}-\frac{1}{q})]_+}U_0\|_{L^s}.
\end{multline*}
Then, it follows that
\begin{equation*}
\left\||k|^{j+\ell} \hat{U}_0\right\|_{L^{q'}(|k|\geq L)}\leq
C\|\na_x^{j+[\ell+3(\frac{1}{s}-\frac{1}{q})]_+}U_0\|_{L^s}.
\end{equation*}
Thus, $I_2$ is estimated by
\begin{equation*}
    I_2\leq
    C(1+t)^{-\frac{\ell}{\si_-}}\|\na_x^{j+[\ell+3(\frac{1}{s}-\frac{1}{q})]_+}U_0\|_{L^s}.
\end{equation*}
Now, \eqref{lem.glidecay.3} follows by plugging the estimates of
$I_1$ and $I_2$ into \eqref{lem.glidecay.p1}. This completes the
proof of Lemma \ref{lem.glidecay}.
\end{proof}

By applying Theorem \ref{thm.ly} together with  Lemma \ref{lem.glidecay} above  to the linearized system \eqref{leq}, we have

\begin{theorem}\label{thm.lr}
Let $1\leq p, s\leq 2\leq q\leq \infty$,  $\ell\geq 0$, and let $j\geq 0$ be an integer. Let $U=[n,u,E,B]$ satisfy the system \eqref{leq} for all $t>0$, $x\in \R^3$ with initial data $U|_{t=0}=U_0$.
Then, the solution of the
linearized homogeneous system satisfies
\begin{equation*}
 \|\na^j U(t)\|_{L^q}
\leq C(1+t)^{-\frac{3}{4}(\frac{1}{p}-\frac{1}{q})-\frac{j}{4}}
\|U_0\|_{L^p}
 +C(1+t)^{-\frac{\ell}{2}} \|\na^{m(j+\ell,s,q)}U_0\|_{L^s}
\end{equation*}
for any $t\geq 0$.
\end{theorem}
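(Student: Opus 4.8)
The plan is to obtain Theorem~\ref{thm.lr} as an immediate consequence of the time-frequency Lyapunov estimate of Theorem~\ref{thm.ly} combined with the abstract decay Lemma~\ref{lem.glidecay}; no genuinely new estimate is needed. First I would integrate the scalar differential inequality \eqref{thm.ly.3} in $t$ with $k$ held fixed, which gives
\begin{equation*}
\CE(\hat U(t,k))\le e^{-c\frac{|k|^4}{(1+|k|^2)^3}\,t}\,\CE(\hat U_0(k)),\qquad t\ge 0,\ k\in\R^3.
\end{equation*}
Using the uniform norm equivalence \eqref{thm.ly.1}, namely $\CE(\hat U)\sim|\hat U|^2$, this converts into the pointwise-in-frequency bound
\begin{equation*}
|\hat U(t,k)|\le C\,e^{-\phi(k)t}\,|\hat U_0(k)|,\qquad \phi(k):=c'\,\frac{|k|^4}{(1+|k|^2)^3},
\end{equation*}
for a suitable $c'>0$. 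Since this holds for the solution issuing from an arbitrary datum $U_0$, it is precisely the hypothesis required to apply Lemma~\ref{lem.glidecay} to the linear solution operator $\semiG(t)$ associated with \eqref{leq}.

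Next I would identify the exponents in the asymptotics of $\phi$. The function $\phi$ is continuous, real-valued and strictly positive on $\R^3$, and satisfies $\phi(k)=c'|k|^{4}\bigl(1+o(1)\bigr)$ as $|k|\to0$ and $\phi(k)=c'|k|^{-2}\bigl(1+o(1)\bigr)$ as $|k|\to\infty$; hence $\sigma_+=4$ and $\sigma_-=2$ in the notation of Lemma~\ref{lem.glidecay}. Substituting $\sigma_+=4$ and $\sigma_-=2$ into the conclusion \eqref{lem.glidecay.3} gives
\begin{equation*}
\|\na^j U(t)\|_{L^q}\le C(1+t)^{-\frac34\left(\frac1p-\frac1q\right)-\frac j4}\|U_0\|_{L^p}+C(1+t)^{-\frac\ell2}\|\na^{m(j+\ell,s,q)}U_0\|_{L^s},
\end{equation*}
which is exactly the assertion of Theorem~\ref{thm.lr}, and this completes the proof.

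There is essentially no analytic obstacle here, since the dissipative-structure analysis has already been carried out in Theorem~\ref{thm.ly} and the high/low frequency splitting in Lemma~\ref{lem.glidecay}; the only points needing care are the bookkeeping of the two exponents of $\phi$. The low-frequency order $4$ traces back to the slowest dissipation term $\tfrac{|k|^4}{(1+|k|^2)^3}|\hat B|^2$ in \eqref{thm.ly.2}, carried by the magnetic field, while the high-frequency order $2$ comes from the regularity-loss factor $|k|^{-2}$ in the same line, carried by $\hat E$ and $\hat B$. One formal caveat: Lemma~\ref{lem.glidecay} is stated under $\si_->\si_+$, whereas in our application $\si_+=4>\si_-=2$; however, the proof of that lemma uses only the two one-sided lower bounds $\phi(k)\gtrsim|k|^{\si_+}$ for $|k|\le L$ and $\phi(k)\gtrsim|k|^{-\si_-}$ for $|k|\ge L$, so the ordering of $\si_\pm$ is irrelevant and the lemma applies verbatim.
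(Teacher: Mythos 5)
Your proof is correct and is precisely the argument the paper intends: integrate the Lyapunov inequality \eqref{thm.ly.3}, use the equivalence \eqref{thm.ly.1} to get the pointwise bound $|\hat U(t,k)|\le Ce^{-\phi(k)t}|\hat U_0(k)|$ with $\phi(k)=c|k|^4/(1+|k|^2)^3$, read off $\si_+=4$, $\si_-=2$, and invoke Lemma~\ref{lem.glidecay}. Your side remark about the hypothesis $\si_->\si_+$ is a legitimate observation (the ordering is indeed violated here), and your resolution is correct: the proof of Lemma~\ref{lem.glidecay} uses only the two one-sided lower bounds on $\phi$, so the conclusion holds regardless of the ordering.
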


For the linearized system \eqref{leq}, Theorem \ref{thm.lr} describes the unified time decay property of the full solution. But, it can not be directly applied to the nonlinear system \eqref{eq}-\eqref{eq.2} to obtain the time decay rates of the solution, particularly those rates for each component of the solution as stated in Theorem \ref{thm.main}. Therefore, we turn to the study of the Green's function of the linearized system.

\section{Green's function}\label{sec3}

In this section we explicitly solve the Cauchy problem on the linearized Navier-Stokes-Maxwell system \eqref{leq} or equivalently its Fourier transform \eqref{leq.f} with initial data
\begin{equation*}
    U|_{t=0}=U_0=[n_0,u_0,E_0,B_0],\ \ x\in \R^3,
\end{equation*}
which satisfies the last equation of  \eqref{leq}. In fact,  as in \cite{Du-EM}, \eqref{leq} can be written as two decoupled subsystems which govern the time evolution of $n$, $\na\cdot u$, $\na\cdot E$ and $\na\times u$, $\na\times E$, $\na\times B$, respectively. For simplicity, we call two subsystems by two systems for the fluid part and the electromagnetic part, respectively, even though only $n$ and $B$ can be fully decoupled. By solving some ordinary differential equations, we shall deduce the representation of the Green's function both for the fluid part and the electromagnetic part.

\subsection{Green's function of the fluid part}

We first solve $\hat{n}$. From \eqref{leq.f}, by taking the time derivative of the first equation and using the second equation to replace $\pa_t\hat{u}$, one has
\begin{equation*}
  \pa_t^2 \hat{n} +\ga i k\cdot (-\ga ik \hat{n} -\be \hat{E}-\mu |k|^2 \hat{u}) =0.
\end{equation*}
Further by using $ ik \cdot \hat{E} = -\frac{\be}{\ga}\hat{n}$ and $\pa_t \hat{n} +\ga i k\cdot \hat{u} =0$, the above is equivalent with
\begin{equation*}
   \pa_t^2 \hat{n}+\mu  |k|^2 \pa_t\hat{n} +(\ga^2|k|^2+\be^2) \hat{n}=0.
\end{equation*}
Thus, one has to solve the second-order ODE
\begin{equation}\label{gf.n1}
    \left\{\begin{array}{l}
       \pa_t^2 \hat{n}+\mu  |k|^2 \pa_t\hat{n} +(\ga^2|k|^2+\be^2) \hat{n}=0,\\[3mm]
        \hat{n}(0,k)=\hat{n}_0(k),\\[3mm]
        \pa_t\hat{n}(0,k)=-i\ga k\cdot \hat{u}_0(k).
    \end{array}\right.
\end{equation}
Consider the indicate equation
\begin{equation}\label{gf.n.ind}
    z^2+\mu|k|^2 z+ (\ga^2|k|^2+\be^2)=0,
\end{equation}
and its roots are denoted by
\begin{equation}\label{gf.n.ind.r}
    \la_\pm=\la_\pm(k):=-\frac{1}{2}\mu |k|^2\pm \frac{1}{2} \sqrt{\mu^2 |k|^4-4(\ga^2|k|^2+\be^2)}.
\end{equation}
Set the representation of the solution $\hat{n}$ to \eqref{gf.n1} as
\begin{equation*}
   \hat{n}(t,k)=C_-(k)e^{\la_-(k)t}+C_+(k)e^{\la_+(k)t}
\end{equation*}
except a finite number of values of $|k|$ where $\la_+=\la_-$ become a two-multiple real root.
Then, the initial conditions of  \eqref{gf.n1} imply
\begin{equation*}
    \left\{\begin{array}{l}
    C_-(k)+C_+(k)=\hat{n}_0(k),\\[3mm]
    \la_-(k)C_-(k)+\la_+(k)C_+(k)=-i\ga k\cdot \hat{u}_0(k).
    \end{array}\right.
\end{equation*}
The solution $C_\pm(k)$ to the above system can be formally written as
\begin{equation*}
    C_-(k)=\frac{\la_+(k)\hat{n}_0(k)+i\ga k\cdot \hat{u}_0(k)}{\la_+(k)-\la_-(k)},\ \
     C_+(k)=\frac{-i\ga k\cdot \hat{u}_0(k)-\la_-(k)\hat{n}_0(k)}{\la_+(k)-\la_-(k)}.
\end{equation*}
Therefore, $\hat{n}$ is solved as
\begin{equation}\label{gf.n2}
   \hat{n}(t,k)=\frac{\la_+e^{\la_- t}-\la_-e^{\la_+ t}}{\la_+-\la_-} \hat{n}_0(k)
   -\frac{e^{\la+ t}-e^{\la_- t}}{\la_+-\la_-}i\ga k\cdot \hat{u}_0(k).
\end{equation}
Denote $\tilde{k}=k/|k|$ when $k\neq 0$. From $ik \cdot \hat{E} = -\frac{\be}{\ga}\hat{n}$, one has
$\tilde{k} \cdot \hat{E} = \frac{\be}{\ga}\frac{i}{|k|}\hat{n}$. Thus, multiplying \eqref{gf.n2} by $\frac{\be}{\ga}\frac{i}{|k|}$ and computing $\frac{\be}{\ga}\frac{i}{|k|}(-i\ga k\cdot \hat{u}_0(k))=\be \tilde{k}\cdot \hat{u}_0(k)$, gives
\begin{equation*}
   \tilde{k}\cdot \hat{E}=\frac{\la_+e^{\la_- t}-\la_-e^{\la_+ t}}{\la_+-\la_-} \tilde{k}\cdot \hat{E}_0
   +\frac{e^{\la+ t}-e^{\la_- t}}{\la_+-\la_-}\be \tilde{k}\cdot \hat{u}_0(k).
\end{equation*}
To solve $\tilde{k}\cdot \hat{u}$, from \eqref{leq.f}, by taking the time derivative of the second equation and then using the first and third equations to replace $\pa_t\hat{n}$, $\pa_t \hat{E}$, respectively, one has
\begin{equation*}
    \pa_t^2 \hat{u}+\ga^2 k k\cdot \hat{u} +\be ik \times \hat{B} +\be^2 \hat{u}+\mu |k|^2 \pa_t\hat{u}=0.
\end{equation*}
Further taking the dot product with $k$ gives
\begin{equation*}
   \dis \pa_t^2 (k\cdot \hat{u})+\mu |k|^2 \pa_t(k\cdot \hat{u})+(\ga^2|k|^2+\be^2) k\cdot \hat{u}=0.
\end{equation*}
In the same way as for $\hat{n}$, one can solve the above second order ODE as
\begin{equation*}
  k\cdot \hat{u}(t,k)=\frac{\la_+e^{\la_- t}-\la_-e^{\la_+ t}}{\la_+-\la_-}  k\cdot \hat{u}(0,k)
   +\frac{e^{\la+ t}-e^{\la_- t}}{\la_+-\la_-} k\cdot \pa_t \hat{u}(0,k).
\end{equation*}
Compute from initial data
\begin{equation*}
  k\cdot \pa_t \hat{u}(0,k)=k\cdot (-\ga ik \hat{n}_0 -\be \hat{E}_0-\mu |k|^2 \hat{u}_0)=-i\ga |k|^2 \hat{n}_0-\be k\cdot  \hat{E}_0-\mu |k|^2 k\cdot \hat{u}_0.
\end{equation*}
Therefore,
\begin{eqnarray}\label{gf.u1}
  k\cdot \hat{u}(t,k)&=& \frac{e^{\la+ t}-e^{\la_- t}}{\la_+-\la_-} (-i\ga |k|^2 \hat{n}_0)\\
  &&+\left[\frac{\la_+e^{\la_- t}-\la_-e^{\la_+ t}}{\la_+-\la_-} +\frac{e^{\la+ t}-e^{\la_- t}}{\la_+-\la_-}(-\mu |k|^2)\right] k\cdot \hat{u}_0\nonumber\\
  &&+\frac{e^{\la+ t}-e^{\la_- t}}{\la_+-\la_-}(-\be k\cdot  \hat{E}_0).\nonumber
\end{eqnarray}
Notice that from the indicate equation \eqref{gf.n.ind},
\begin{equation*}
    \la_++\la_-=-\mu |k|^2,\ \ \la_+\la_-=\ga^2|k|^2+\be^2,
\end{equation*}
which imply
\begin{equation*}
 \frac{\la_+e^{\la_- t}-\la_-e^{\la_+ t}}{\la_+-\la_-} +\frac{e^{\la+ t}-e^{\la_- t}}{\la_+-\la_-}(-\mu |k|^2)=\frac{\la_+e^{\la_+ t}-\la_-e^{\la_- t}}{\la_+-\la_-}.
\end{equation*}
Applying the above identity into \eqref{gf.u1} and dividing it by $|k|$, one has
\begin{multline*}
  \tilde{k}\cdot \hat{u}(t,k)= \frac{e^{\la+ t}-e^{\la_- t}}{\la_+-\la_-} (-i\ga |k| \hat{n}_0)
+\frac{\la_+e^{\la_+ t}-\la_-e^{\la_- t}}{\la_+-\la_-} \tilde{k}\cdot \hat{u}_0\\
  +\frac{e^{\la+ t}-e^{\la_- t}}{\la_+-\la_-}(-\be \tilde{k}\cdot  \hat{E}_0).
\end{multline*}
In a word, one has proved
\begin{proposition}\label{prop.n}
Let $U=[n,u,E,B]$ satisfy the system \eqref{leq} for all $t>0$, $x\in \R^3$ with initial data $U|_{t=0}=U_0=[n_0,u_0,E_0,B_0]$. Denote $\tilde{k}=k/|k|$ when $k \neq 0$. Set $ \hat{u}_{\parallel}$ by
$\hat{u}_{\parallel}=\tilde{k}\tilde{k}\cdot\hat{u}$, and likewise for $\hat{E}_{\parallel}$. Then,
\begin{equation*}
   \left(\begin{array}{c}
     \hat{n}\\
     \hat{u}_{\parallel}\\
      \hat{E}_{\parallel}
    \end{array}\right) =\CN
    \left(\begin{array}{c}
     \hat{n}_0\\
     \hat{u}_{0,\parallel}\\
      \hat{E}_{0,\parallel}
    \end{array}\right)
\end{equation*}
except a finite number of values of $|k|$, where the matrix $\CN=\CN(t,k)_{7\times 7}$ is given by
\begin{equation*}
     \CN=
     \left(\begin{array}{ccc}
     \dis \frac{\la_+e^{\la_- t}-\la_-e^{\la_+ t}}{\la_+-\la_-} &\ \ \ \dis  -i\ga{k}^T \frac{e^{\la+ t}-e^{\la_- t}}{\la_+-\la_-} &\ \ \ \dis  0 \\[4mm]
     \dis -i\ga k\frac{e^{\la+ t}-e^{\la_- t}}{\la_+-\la_-}   &\ \ \ \dis   \frac{\la_+e^{\la_+ t}-\la_-e^{\la_- t}}{\la_+-\la_-}\FI_3 &\ \ \ \dis  -\be\frac{e^{\la+ t}-e^{\la_- t}}{\la_+-\la_-}\FI_3\\[4mm]
     \dis 0 &\ \ \ \dis  \be\frac{e^{\la+ t}-e^{\la_- t}}{\la_+-\la_-}\FI_3 &\ \ \ \dis  \frac{\la_+e^{\la_- t}-\la_-e^{\la_+ t}}{\la_+-\la_-}\FI_3
    \end{array}\right),
\end{equation*}
and $\la_\pm=\la_\pm(k)$ given by \eqref{gf.n.ind.r} denote the roots of the indicate equation \eqref{gf.n.ind}.

\end{proposition}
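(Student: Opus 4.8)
The plan is to project the Fourier system \eqref{leq.f} onto the longitudinal sector $[\hat n,\tilde k\cdot\hat u,\tilde k\cdot\hat E]$, to show that this sector is governed by a single scalar second-order ODE (the same equation for each of $\hat n$ and $\tilde k\cdot\hat u$, with $\tilde k\cdot\hat E$ tied to $\hat n$ through the Gauss law), to solve that ODE via its indicial equation \eqref{gf.n.ind}, and finally to repackage the scalar solution formulas as the $7\times7$ matrix $\CN$. First I would differentiate the continuity equation of \eqref{leq.f} in $t$ and substitute $\pa_t\hat u$ from the momentum equation; then, using the constraint $ik\cdot\hat E=-\tfrac{\be}{\ga}\hat n$ (valid for all $t$) and the continuity equation once more to eliminate $ik\cdot\hat E$ and $ik\cdot\hat u$, I obtain the closed equation $\pa_t^2\hat n+\mu|k|^2\pa_t\hat n+(\ga^2|k|^2+\be^2)\hat n=0$ with data $\hat n(0)=\hat n_0$, $\pa_t\hat n(0)=-i\ga k\cdot\hat u_0$. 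Away from the finitely many $|k|$ where the discriminant of \eqref{gf.n.ind} vanishes --- there is exactly one such positive value, the positive root of the quartic $\mu^2|k|^4-4(\ga^2|k|^2+\be^2)$ --- the exponentials $e^{\la_\pm t}$ are independent, and fitting the data produces \eqref{gf.n2}. Since $ik\cdot\hat E=-\tfrac{\be}{\ga}\hat n$ is the same as $\tilde k\cdot\hat E=\tfrac{\be i}{\ga|k|}\hat n$, multiplying \eqref{gf.n2} by $\tfrac{\be i}{\ga|k|}$ and rewriting $\tfrac{\be i}{\ga|k|}(-i\ga k\cdot\hat u_0)=\be\,\tilde k\cdot\hat u_0$ expresses $\tilde k\cdot\hat E(t)$ through $\tilde k\cdot\hat E_0$ and $\tilde k\cdot\hat u_0$ with exactly the same coefficients; this gives the first row of $\CN$ and, up to the obvious rescaling, its third row.

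Next I would run the analogous computation for the velocity: differentiate the momentum equation of \eqref{leq.f} in $t$, replace $\pa_t\hat n$ and $\pa_t\hat E$ by the continuity and $\hat E$-equations, and dot the result with $k$. The magnetic term disappears because $k\cdot(k\times\hat B)=0$, so $k\cdot\hat u$ satisfies the \emph{same} scalar ODE $\pa_t^2(k\cdot\hat u)+\mu|k|^2\pa_t(k\cdot\hat u)+(\ga^2|k|^2+\be^2)(k\cdot\hat u)=0$, now with $k\cdot\pa_t\hat u(0)=-i\ga|k|^2\hat n_0-\be k\cdot\hat E_0-\mu|k|^2k\cdot\hat u_0$ read off from the momentum equation itself. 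Solving as before and dividing by $|k|$ gives $\tilde k\cdot\hat u(t)$ in terms of $\hat n_0$, $\tilde k\cdot\hat u_0$, $\tilde k\cdot\hat E_0$; here the Vieta identities $\la_++\la_-=-\mu|k|^2$ and $\la_+\la_-=\ga^2|k|^2+\be^2$ are used to collapse $\tfrac{\la_+e^{\la_-t}-\la_-e^{\la_+t}}{\la_+-\la_-}-\mu|k|^2\tfrac{e^{\la_+t}-e^{\la_-t}}{\la_+-\la_-}$ into $\tfrac{\la_+e^{\la_+t}-\la_-e^{\la_-t}}{\la_+-\la_-}$, the coefficient that appears in the central block of $\CN$.

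Finally I would assemble the matrix. Writing $\hat u_\parallel=\tilde k(\tilde k\cdot\hat u)$ and $\hat E_\parallel=\tilde k(\tilde k\cdot\hat E)$, any scalar coefficient $c(t,k)$ multiplying $\tilde k\cdot\hat n_0$ contributes a column $3$-vector $c(t,k)\tilde k$ (equivalently a multiple of $k$) acting on the scalar $\hat n_0$; any scalar coefficient relating two longitudinal vectors contributes a multiple of $\FI_3$, since those vectors are already parallel to $k$; and the $\hat n$-row stays a genuine scalar row. Collecting the three scalar identities in this way reproduces precisely the displayed $\CN$. The step I expect to be the main obstacle is the decoupling itself: one must be sure that, modulo the two constraints $ik\cdot\hat E=-\tfrac{\be}{\ga}\hat n$ and $k\cdot\hat B=0$, the longitudinal sector is genuinely self-contained and that the transverse magnetic field never feeds back into it, and one must carefully excise the finitely many frequencies where $\la_+=\la_-$. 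At those frequencies the formulas actually extend by continuity (so the representation could be upgraded to all $k\neq 0$ by a limiting argument), but this refinement is not needed for the proposition as stated.
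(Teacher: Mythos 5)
Your proposal follows essentially the same route as the paper: reduce the longitudinal sector to the scalar second-order ODE \eqref{gf.n1} via the Gauss constraint, solve it through the indicial equation \eqref{gf.n.ind} to get \eqref{gf.n2}, rescale by $\tfrac{\be i}{\ga|k|}$ for $\tilde k\cdot\hat E$, repeat for $k\cdot\hat u$ with the $k\cdot(k\times\hat B)=0$ cancellation and the Vieta identities, and then assemble $\CN$. The details (initial conditions, the use of $\la_+ +\la_-=-\mu|k|^2$, $\la_+\la_-=\ga^2|k|^2+\be^2$, and the identification of the exceptional frequencies as zeros of the discriminant of \eqref{gf.n.ind}) all match the paper's computation, so the argument is correct and not genuinely different.
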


\subsection{Green's function of the electromagnetic part}\label{sec3.2}

By applying the operator $\tilde{k}\times$ to equations of $\pa_t\hat{u},\pa_t\hat{E},\pa_t\hat{B}$ in \eqref{leq.f}, one has
\begin{equation*}
    \left\{\begin{array}{l}
     \pa_t \tilde{k}\times \hat{u} +\be \tilde{k}\times\hat{E}+\mu |k|^2 \tilde{k}\times\hat{u}=0,\\
     \pa_t \tilde{k}\times\hat{E} - ik \times (\tilde{k}\times\hat{B}) -\be \tilde{k}\times\hat{u} = 0,\\
     \pa_t  \tilde{k}\times\hat{B} + ik \times ( \tilde{k}\times\hat{E}) = 0.
    \end{array}\right.
\end{equation*}
Define
\begin{equation*}
    \left\{\begin{array}{l}
     M_1=M_1(t,k)=\hat{u}_\perp(t,k)=-\tilde{k}\times\tilde{k}\times \hat{u},\\
      M_2=M_2(t,k)=\hat{E}_\perp(t,k)=-\tilde{k}\times\tilde{k}\times \hat{E},\\
       M_3=M_3(t,k)=\hat{B}_\perp(t,k)=-\tilde{k}\times\tilde{k}\times \hat{B}.
    \end{array}\right.
\end{equation*}
Then, $M_1, M_2, M_3$ satisfy the first-order ODE system
\begin{equation*}
    \left\{\begin{array}{l}
    \pa_t M_1 +\mu |k|^2M_1 +\be M_2=0,\\
    \pa_t M_2 -ik\times M_3 -\be M_1=0,\\
    \pa_t M_3 +ik\times M_2=0,
    \end{array}\right.
\end{equation*}
namely,
\begin{equation*}
    \pa_t\left(\begin{array}{c}
     M_1\\
     M_2\\
     M_3
    \end{array}\right)+
    \left(\begin{array}{ccc}
    \mu |k|^2 & \be & 0 \\
    -\be & 0 & -ik\times\\
    0 & ik\times & 0
    \end{array}\right)
    \left(\begin{array}{c}
     M_1\\
     M_2\\
     M_3
    \end{array}\right)=0.
\end{equation*}
To simplify the computation of solving the above system, we take change of variables
\begin{equation*}
    \xi=\frac{1}{\be}k,\ \ \tau=\be t,
\end{equation*}
and thus  $M_1, M_2, M_3$ satisfy the rescaled ODE system
\begin{equation}\label{3ode}
    \pa_t\left(\begin{array}{c}
     M_1\\
     M_2\\
     M_3
    \end{array}\right)+
    \left(\begin{array}{ccc}
    a |\xi|^2 & 1 & 0 \\
    -1 & 0 & -i\xi\times\\
    0 & i\xi \times & 0
    \end{array}\right)
    \left(\begin{array}{c}
     M_1\\
     M_2\\
     M_3
    \end{array}\right)=0
\end{equation}
with  $a=\mu\be$.

Similar in \cite{Du-EM}, one can derive from \eqref{3ode} a third order ODE satisfied by $M_2$. In fact, compute $\pa_\tau^2M_2$ by replacing $\pa_\tau M_1$, $\pa_\tau M_3$ to get
\begin{equation}\label{3ode.p1}
    \pa_\tau^2 M_2+(|\xi|^2+1)M_2=-a|\xi|^2M_1.
\end{equation}
By taking one more $\tau$ derivative of the above equation and then replacing $\pa_\tau M_1$, it follows
\begin{equation}\label{3ode.p2}
    \pa_\tau^3 M_2+(|\xi|^2+1)\pa_\tau M_2-a |\xi|^2M_2=a^2|\xi|^4M_1.
\end{equation}
Multiplying \eqref{3ode.p1} by $a|\xi|^2$ and adding it to \eqref{3ode.p2} yields
\begin{equation*}
  \pa_\tau^3 M_2+ a |\xi|^2 \pa_\tau^2 M_2 +(|\xi|^2+1)\pa_\tau M_2+a |\xi|^4M_2=0.
\end{equation*}
Then, $M_2$ can be solved as
\begin{equation*}
    M_2(\tau,\xi)=C_1e^{\la_1\tau}+C_2 e^{\la_2\tau}+C_3 e^{\la_3 \tau},
\end{equation*}
where $\la_j=\la_j(\xi)$, $j=1,2,3$, denote the roots of
the indicate equation
\begin{equation}\label{ind}
    z^3+a |\xi|^2 z^2 +(|\xi|^2+1)z+a |\xi|^4=0,\ \ z\in \C.
\end{equation}
Here and hereafter we have excluded a finite number of values of $|\xi|$ such that equation \eqref{ind} has possible multiple real roots. Moreover, from equations of $M_1$ and $M_3$ in \eqref{3ode},  $M_1(\tau,\xi)$ and  $M_3(\tau,\xi)$ can be also solved as
\begin{eqnarray}\label{sm1}
  M_1(\tau,\xi) &=& M_1(0,\tau)e^{-a|\xi|^2\tau}-\int_0^\tau  e^{-a|\xi|^2(\tau-s)}M_2(s,\xi)ds\\
&=& M_1(0,\tau)e^{-a|\xi|^2\tau}-\int_0^\tau  e^{-a|\xi|^2(\tau-s)}\sum_{j=1}^3 C_je^{\la_j s}ds\nonumber\\
&=& M_1(0,\tau)e^{-a|\xi|^2\tau}-e^{-a|\xi|^2\tau}\sum_{j=1}^3C_j\int_0^\tau  e^{(\la_j+a|\xi|^2)s}ds\nonumber\\
&=& \left( M_1(0,\tau)+\sum_{j=1}^3\frac{C_j}{\la_j+a|\xi|^2}\right)e^{-a|\xi|^2\tau}
-\sum_{j=1}^3\frac{C_j}{\la_j+a|\xi|^2}e^{\la_j\tau},\nonumber
\end{eqnarray}
and
\begin{eqnarray}\label{sm3}
  M_3(\tau,\xi) &=& M_3(0,\xi)+\int_0^\tau -i\xi\times M_2(s,\xi)ds\\
&=& M_3(0,\xi) -i\xi \times\int_0^\tau\sum_{j=1}^3 C_je^{\la_j s}ds\nonumber\\
&=& M_3(0,\xi)+i\xi\times \sum_{j=1}^3 \frac{C_j}{\la_j}-i\xi\times \sum_{j=1}^3 \frac{C_j}{\la_j}e^{\la_j\tau}.\nonumber
\end{eqnarray}

One can determine $C_j=C_j(\xi)$ in terms of $M_j(0,\xi)$ as follows. The initial condition of \eqref{3ode} implies
\begin{equation*}
    \left(\begin{array}{ccc}
      1 & 1 & 1 \\
      \la_1 & \la_2 & \la_3 \\
      \la_1^2 & \la_2^2 & \la_3^3
    \end{array}\right)
 \left(\begin{array}{c}
     C_1 \\
     C_2 \\
     C_3
    \end{array}\right)=
 \left(\begin{array}{c}
     M_2(0,\xi) \\
     \pa_\tau M_2(0,\xi) \\
      \pa_\tau^2 M_2(0,\xi)
    \end{array}\right).
\end{equation*}
On the other hand, from equations of $\pa_\tau M_2$ and $\pa_\tau^2M_2$ in \eqref{3ode} and \eqref{3ode.p1},
\begin{equation*}
 \left(\begin{array}{c}
     M_2(0,\xi) \\
     \pa_\tau M_2(0,\xi) \\
      \pa_\tau^2 M_2(0,\xi)
    \end{array}\right)
= \left(\begin{array}{ccc}
      0& 1 & 0 \\
      1 & 0 & i\xi\times \\
     -a|\xi|^2 & -(|\xi|^2+1) & 0
    \end{array}\right)
\left(\begin{array}{c}
     M_1(0,\xi) \\
    M_2(0,\xi) \\
     M_3(0,\xi)
    \end{array}\right).
\end{equation*}
Combining the above two equations gives
\begin{equation*}
 \left(\begin{array}{c}
     C_1 \\
     C_2 \\
     C_3
    \end{array}\right)=
 \left(\begin{array}{ccc}
      1 & 1 & 1 \\
      \la_1 & \la_2 & \la_3 \\
      \la_1^2 & \la_2^2 & \la_3^3
    \end{array}\right)^{-1}
\left(\begin{array}{ccc}
      0& 1 & 0 \\
      1 & 0 & i\xi\times \\
     -a|\xi|^2 & -(|\xi|^2+1) & 0
    \end{array}\right)
\left(\begin{array}{c}
     M_1(0,\xi) \\
    M_2(0,\xi) \\
     M_3(0,\xi)
    \end{array}\right).
\end{equation*}
One can compute
\begin{multline*}
  \left(\begin{array}{ccc}
      1 & 1 & 1 \\
      \la_1 & \la_2 & \la_3 \\
      \la_1^2 & \la_2^2 & \la_3^3
    \end{array}\right)^{-1}=\frac{1}{(\la_1-\la_2)(\la_2-\la_3)(\la_3-\la_1)}\\
   \left(\begin{array}{ccc}
      -\la_2\la_3(\la_2-\la_3) &\ \ \ (\la_2+\la_3)(\la_2-\la_3) &\ \ \ -(\la_2-\la_3) \\
   -\la_3\la_1(\la_3-\la_1) &\ \ \ (\la_3+\la_1)(\la_3-\la_1) &\ \ \ -(\la_3-\la_1) \\
     -\la_1\la_2(\la_1-\la_2) &\ \ \ (\la_1+\la_2)(\la_1-\la_2) &\ \ \ -(\la_1-\la_2)
    \end{array}\right).
\end{multline*}
Thus,
\begin{equation}\label{form.C}
 \left(\begin{array}{c}
     C_1 \\
     C_2 \\
     C_3
    \end{array}\right)=\CC  \left(\begin{array}{c}
     M_1(0,\xi) \\
    M_2(0,\xi) \\
     M_3(0,\xi)
    \end{array}\right)
\end{equation}
with $\CC$ given by
\begin{multline*}
  \CC =\frac{1}{(\la_1-\la_2)(\la_2-\la_3)(\la_3-\la_1)}\\
    \left(\begin{array}{ccc}
 (\la_2+\la_3+a|\xi|^2)(\la_2-\la_3) &\ \ \ (|\xi|^2+1-\la_2\la_3)(\la_2-\la_3) &\ \ \ (\la_2^2-\la_3^2)i\xi\times \\
 (\la_3+\la_1+a|\xi|^2)(\la_3-\la_1) &\ \ \ (|\xi|^2+1-\la_3\la_1)(\la_3-\la_1) &\ \ \ (\la_3^2-\la_1^2)i\xi\times \\
 (\la_1+\la_2+a|\xi|^2)(\la_1-\la_2) &\ \ \ (|\xi|^2+1-\la_1\la_2)(\la_1-\la_2) &\ \ \ (\la_1^2-\la_2^2)i\xi\times
    \end{array}\right).
\end{multline*}
Notice that
\begin{equation*}
    \prod_{j=1}^3(z-\la_j)=z^3-(\la_1+\la_2+\la_3)z^2+(\la_1\la_2+\la_2\la_3+\la_3\la_1)z-\la_1\la_2\la_3.
\end{equation*}
Comparing the above with the indicate equation \eqref{ind} shows
\begin{eqnarray*}
&&\la_1+\la_2+\la_3=-a|\xi|^2,\\
&&\la_1\la_2+\la_2\la_3+\la_3\la_1=|\xi|^2+1,\\
&&\la_1\la_2\la_3=-a|\xi|^4.
\end{eqnarray*}
Then, one can rewrite $\CC$ as
\begin{multline}\label{def.C}
  \CC =\frac{1}{(\la_1-\la_2)(\la_2-\la_3)(\la_3-\la_1)}\\
   \left(\begin{array}{ccc}
 -\la_1(\la_2-\la_3) &\ \ \ \la_1(\la_2^2-\la_3^2) &\ \ \ (\la_2^2-\la_3^2)i\xi\times \\[3mm]
  -\la_2(\la_3-\la_1) &\ \ \ \la_2(\la_3^2-\la_1^2) &\ \ \ (\la_3^2-\la_1^2)i\xi\times \\[3mm]
 -\la_3(\la_1-\la_2) &\ \ \ \la_3(\la_1^2-\la_2^2) &\ \ \ (\la_1^2-\la_2^2)i\xi\times
    \end{array}\right).
\end{multline}
Basing on the formula \eqref{form.C} of the coefficients $C_i(\xi)$ in terms of initial data $M_i(0,\xi)$, it is a direct and long computation to verify the following lemma, and its proof is omitted for simplicity.

\begin{lemma}
\begin{eqnarray*}
&& M_3(0,\xi)+i\xi\times \sum_{j=1}^3 \frac{C_j}{\la_j}=0,\\
&& M_1(0,\tau)+\sum_{j=1}^3\frac{C_j}{\la_j+a|\xi|^2}=0.
\end{eqnarray*}
\end{lemma}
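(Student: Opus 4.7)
The plan is to verify both identities by direct substitution of the formula \eqref{def.C} for $\CC$ and simplifying with the relations among $\la_1,\la_2,\la_3$ coming from \eqref{ind}. By Vieta's formulas,
\begin{equation*}
\la_1+\la_2+\la_3 = -a|\xi|^2,\quad \la_1\la_2+\la_2\la_3+\la_3\la_1 = |\xi|^2+1,\quad \la_1\la_2\la_3 = -a|\xi|^4.
\end{equation*}
I will also use that $\xi \cdot M_3(0,\xi)=0$, which follows from $k\cdot\hat B_0 = 0$ (the divergence constraint $\nabla\cdot B=0$). Two symmetric-function identities,
\begin{align*}
\la_1^2(\la_2-\la_3) + \la_2^2(\la_3-\la_1) + \la_3^2(\la_1-\la_2) &= -\Delta, \\
\la_1^3(\la_2-\la_3) + \la_2^3(\la_3-\la_1) + \la_3^3(\la_1-\la_2) &= -(\la_1+\la_2+\la_3)\,\Delta,
\end{align*}
with $\Delta := (\la_1-\la_2)(\la_2-\la_3)(\la_3-\la_1)$, will do the bulk of the algebraic work; both follow by direct expansion.

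For the first identity I substitute \eqref{def.C} into $\sum_{j=1}^3 C_j/\la_j$. The coefficients of $M_1(0,\xi)$ and $M_2(0,\xi)$ vanish via the telescoping sums $\sum_j(\la_{j+1}-\la_{j-1})=0$ and $\sum_j(\la_{j+1}^2-\la_{j-1}^2)=0$ (cyclic indexing $1\to 2\to 3\to 1$). The coefficient of $M_3(0,\xi)$ equals $\frac{i\xi\times}{\Delta}\sum_j\frac{\la_{j+1}^2-\la_{j-1}^2}{\la_j}$; putting this over the common denominator $\la_1\la_2\la_3=-a|\xi|^4$ and invoking the cubic identity rewrites the scalar as $-1/|\xi|^2$, so the coefficient collapses to $-(i\xi\times M_3(0,\xi))/|\xi|^2$. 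Applying $i\xi\times$ and using $\xi\cdot M_3(0,\xi)=0$ with the vector triple product identity yields $(i\xi)\times((i\xi)\times M_3(0,\xi))=|\xi|^2 M_3(0,\xi)$, hence $i\xi\times\sum_j C_j/\la_j = -M_3(0,\xi)$, which is the first identity.

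For the second identity the key observation is $\la_j + a|\xi|^2 = -(\la_{j+1}+\la_{j-1}) = \la_j - s$, where $s:=\la_1+\la_2+\la_3=-a|\xi|^2$. Substituting this and \eqref{def.C} into $\sum_{j=1}^3 C_j/(\la_j+a|\xi|^2)$, the coefficient of $M_3(0,\xi)$ reduces to $-\sum_j(\la_{j+1}-\la_{j-1})=0$ and that of $M_2(0,\xi)$ to $-\sum_j \la_j(\la_{j+1}-\la_{j-1})=0$ (using $\la_{j+1}^2-\la_{j-1}^2 = (\la_{j+1}+\la_{j-1})(\la_{j+1}-\la_{j-1})$). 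The coefficient of $M_1(0,\xi)$ equals $\frac{1}{\Delta}\sum_j \frac{\la_j(\la_{j+1}-\la_{j-1})}{s-\la_j}$, which I evaluate by placing over the common denominator $(s-\la_1)(s-\la_2)(s-\la_3)$; plugging $z=s$ into the polynomial in \eqref{ind} shows this product equals $-a|\xi|^2$. Using $(s-\la_{j+1})(s-\la_{j-1}) = \la_j s + \la_{j+1}\la_{j-1}$, the numerator simplifies to
\begin{equation*}
s\,[\la_1^2(\la_2-\la_3)+\la_2^2(\la_3-\la_1)+\la_3^2(\la_1-\la_2)] + \la_1\la_2\la_3\sum_j(\la_{j+1}-\la_{j-1}),
\end{equation*}
whose second piece vanishes and whose first equals $-s\Delta = a|\xi|^2\Delta$. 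The coefficient is thus $-1$, proving the second identity.

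The main obstacle is the $M_1(0,\xi)$ coefficient in the second identity: unlike the others, it does not collapse by a single telescoping but requires the polynomial-division simplification together with the value of the indicate polynomial at $z=-a|\xi|^2$. A slicker alternative would be a uniqueness argument — since $M_1,M_2,M_3$ all solve the same third-order ODE derived from \eqref{3ode}, they must be pure linear combinations of $e^{\la_j\tau}$, so the extra $A\,e^{-a|\xi|^2\tau}$ term in \eqref{sm1} and the constant in \eqref{sm3} must vanish — but making this rigorous still amounts to verifying consistency of the three equations of \eqref{3ode} with each other, which is essentially the same computation.
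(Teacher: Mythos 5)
Your direct computation is correct and is exactly the "direct and long computation" the paper invokes and omits: Vieta's relations plus the two antisymmetric sums $\sum_j\la_j^2(\la_{j+1}-\la_{j-1})=-\Delta$ and $\sum_j\la_j^3(\la_{j+1}-\la_{j-1})=-(\la_1+\la_2+\la_3)\Delta$, together with $g(-a|\xi|^2)=-a|\xi|^2$ and $\xi\cdot M_3(0,\xi)=0$, verify both identities. One small remark: the "slicker alternative" you sketch and then set aside is in fact cleaner than you give it credit for — differentiating the first and third equations of \eqref{3ode} in the same way the paper does for $M_2$ shows $M_1$ and $M_3$ satisfy the identical cubic ODE \eqref{ind}, and since $0$ and $-a|\xi|^2$ are never roots of $g$ (Property 1 gives $-ar<\si<0$ and $g(0)=ar^2\ne 0$), the spurious $e^{0\cdot\tau}$ and $e^{-a|\xi|^2\tau}$ terms in \eqref{sm3} and \eqref{sm1} must have zero coefficient, which is the lemma with no symmetric-function algebra at all. (Also note $M_1(0,\tau)$ in the statement is a typo for $M_1(0,\xi)$.)
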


From the above lemma, $M_1(\tau,\xi)$ and $M_3(\tau,\xi)$ respectively  given in \eqref{sm1} and \eqref{sm3} are reduced to
\begin{eqnarray*}
 M_1(\tau,\xi) &=&
-\sum_{j=1}^3\frac{C_j}{\la_j+a|\xi|^2}e^{\la_j\tau}=\sum_{j=1}^3\frac{C_j}{\sum_{i\neq j}\la_i}e^{\la_j\tau},\\
  M_3(\tau,\xi) &=&-i\xi\times \sum_{j=1}^3 \frac{C_j}{\la_j}e^{\la_j\tau}.
\end{eqnarray*}
Thus, we have solved $M_j(\tau,\xi)$, $j=1,2,3,$ as
\begin{equation*}
\left(\begin{array}{c}
     M_1(\tau,\xi) \\
      M_2(\tau,\xi) \\
     M_3(\tau,\xi)
    \end{array}\right)=\CB
 \left(\begin{array}{c}
     C_1 \\
     C_2 \\
     C_3
    \end{array}\right)
\end{equation*}
with $\CB$ defined by
\begin{equation}\label{def.B}
    \CB=\left(\begin{array}{ccc}
     \frac{e^{\la_1\tau}}{\la_2+\la_3} &\ \ \ \frac{e^{\la_2\tau}}{\la_3+\la_1} &\ \ \frac{e^{\la_3\tau}}{\la_1+\la_2} \\[3mm]
  e^{\la_1\tau} &\ \  e^{\la_2\tau} &\ \  e^{\la_3\tau} \\[3mm]
      -i\xi\times \frac{e^{\la_1\tau}}{\la_1} &\ \ -i\xi\times \frac{e^{\la_2\tau}}{\la_2}&\ \ -i\xi\times \frac{e^{\la_3\tau}}{\la_3}
    \end{array}\right).
\end{equation}
Set
\begin{equation*}
    \CM=\CB\CC.
\end{equation*}
Then,
\begin{equation*}
\left(\begin{array}{c}
     M_1(\tau,\xi) \\
      M_2(\tau,\xi) \\
     M_3(\tau,\xi)
    \end{array}\right)=\CM\left(\begin{array}{c}
     M_1(0,\xi) \\
    M_2(0,\xi) \\
     M_3(0,\xi)
    \end{array}\right) .
\end{equation*}
In a summary, one has proved

\begin{proposition}\label{prop.m}
Let $U=[n,u,E,B]$ satisfy the system \eqref{leq} for all $t>0$, $x\in \R^3$ with initial data $U|_{t=0}=U_0=[n_0,u_0,E_0,B_0]$. Denote $\tilde{k}=k/|k|$ when $k \neq 0$. Set
\begin{equation*}
    \hat{u}_\perp=-\tilde{k}\times (\tilde{k}\times \hat{u})=(\FI_3-\tilde{k}\otimes \tilde{k})\hat{u},
\end{equation*}
and likewise for $\hat{E}_{\perp}$ and $\hat{B}_{\perp}$. Then,
\begin{equation*}
   \left(\begin{array}{c}
      \hat{u}_\perp\\
      \hat{E}_\perp\\
      \hat{B}_\perp
    \end{array}\right) =\CM(\be t,\frac{k}{\be})
    \left(\begin{array}{c}
     \hat{u}_{0,\perp}\\
      \hat{E}_{0,\perp}\\
      \hat{B}_{0,\perp}
    \end{array}\right)
\end{equation*}
except a finite number of values of $|k|$.
Here, in terms of variables $\tau=\be t$ and $\xi=\frac{1}{\be} k$, the matrix $\CM=\CM(\tau,\xi)_{9\times 9}$ is equal to $\CB\CC$ with $\CB$, $\CC$ defined in \eqref{def.B} and \eqref{def.C}, respectively.
Moreover, the elements of $\CM(\tau,\xi)$ take the form of
\begin{eqnarray*}
  \CM_{11} &=& \frac{e^{\la_1\tau}(-\la_1)\frac{\la_2-\la_3}{\la_2+\la_3}
+e^{\la_2\tau}(-\la_2)\frac{\la_3-\la_1}{\la_3+\la_1}
+e^{\la_3\tau}(-\la_3)\frac{\la_1-\la_2}{\la_1+\la_2}}{(\la_1-\la_2)(\la_2-\la_3)(\la_3-\la_1)}\FI_3,\\
  \CM_{22} &=& \frac{e^{\la_1\tau}\la_1(\la_2^2-\la_3^2)
+e^{\la_2\tau}\la_2(\la_3^2-\la_1^2)
+e^{\la_3\tau}\la_3(\la_1^2-\la_2^2)}{(\la_1-\la_2)(\la_2-\la_3)(\la_3-\la_1)}\FI_3,\\
  \CM_{33} &=& \frac{e^{\la_1\tau}(-|\xi|^2)\frac{\la_2^2-\la_3^2}{\la_1}
+e^{\la_2\tau}(-|\xi|^2)\frac{\la_3^2-\la_1^2}{\la_2}
+e^{\la_3\tau}(-|\xi|^2)\frac{\la_1^2-\la_2^2}{\la_3}}{(\la_1-\la_2)(\la_2-\la_3)(\la_3-\la_1)}\FI_3,
\end{eqnarray*}
and
\begin{eqnarray*}
  \CM_{12} &=& \frac{e^{\la_1\tau}\la_1(\la_2-\la_3)
+e^{\la_2\tau}\la_2(\la_3-\la_1)
+e^{\la_3\tau}\la_3(\la_1-\la_2)}{(\la_1-\la_2)(\la_2-\la_3)(\la_3-\la_1)}\FI_3,\\
  \CM_{13} &=& \frac{e^{\la_1\tau}(\la_2-\la_3)
+e^{\la_2\tau}(\la_3-\la_1)
+e^{\la_3\tau}(\la_1-\la_2)}{(\la_1-\la_2)(\la_2-\la_3)(\la_3-\la_1)}i\xi\times,\\
  \CM_{23} &=& \frac{e^{\la_1\tau}(\la_2^2-\la_3^2)
+e^{\la_2\tau}(\la_3^2-\la_1^2)
+e^{\la_3\tau}(\la_1^2-\la_2^2)}{(\la_1-\la_2)(\la_2-\la_3)(\la_3-\la_1)}i\xi\times,
\end{eqnarray*}
and
\begin{equation*}
    \CM_{21}=-\CM_{12}, \ \CM_{31}=\CM_{13}, \ \CM_{32}=-\CM_{23},
\end{equation*}
where $\la_j=\la_j(\xi)$, $j=1,2,3$, denote the roots of
the indicate equation \eqref{ind}.
\end{proposition}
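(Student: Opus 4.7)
Most of the work has already been carried out in the discussion preceding the statement. Applying $\tilde{k}\times$ to the equations of motion and rescaling produced the first-order system \eqref{3ode} for $M_1 = \hat{u}_\perp$, $M_2 = \hat{E}_\perp$, $M_3 = \hat{B}_\perp$; eliminating $M_1$ and $M_3$ yielded the scalar third-order ODE in $M_2$ with characteristic polynomial \eqref{ind}, so that $M_2 = \sum_{j=1}^3 C_j e^{\la_j \tau}$ with coefficients determined by initial data through the matrix $\CC$ of \eqref{def.C}, and $M_1, M_3$ were reconstructed in terms of the $C_j$ through the matrix $\CB$ of \eqref{def.B} after the preceding lemma killed the constant terms. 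Therefore the content of the proposition reduces to the identity $\CM = \CB\CC$ together with the explicit tidied form of its nine entries.

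My plan is to compute the block matrix product $\CB\CC$ directly, entry by entry. Three elementary simplifications handle all nine entries. First, the algebraic cancellation $(\la_j^2 - \la_l^2)/(\la_j + \la_l) = \la_j - \la_l$ collapses the denominators $\la_2+\la_3$, $\la_3+\la_1$, $\la_1+\la_2$ from the first row of $\CB$ against matching numerators in the second and third columns of $\CC$. Second, the Vieta relation $\la_1+\la_2+\la_3 = -a|\xi|^2$, which is already used above, lets me rewrite $\la_j + a|\xi|^2 = -\sum_{i \neq j}\la_i$, which is needed to convert the denominators appearing in $M_1$ into the form $\la_i + \la_l$ matching the first row of $\CB$. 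Third, on the perpendicular subspace $\xi^\perp$, the composition $(-i\xi\times)(i\xi\times)$ acts as multiplication by $-|\xi|^2$, and this is what produces the $-|\xi|^2$ prefactor in $\CM_{33}$. With these three identities in hand, the six independent entries $\CM_{11}, \CM_{22}, \CM_{33}, \CM_{12}, \CM_{13}, \CM_{23}$ drop out of the row-column multiplications, while the symmetry relations $\CM_{21} = -\CM_{12}$, $\CM_{31} = \CM_{13}$, $\CM_{32} = -\CM_{23}$ follow by comparing the signs in the first versus the second row of $\CC$ against the signs attached to $i\xi\times$ in $\CB$.

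The only delicate point is bookkeeping: one must carry the cross-product operator $i\xi\times$ carefully through the multiplication, since it is present only in the third column of $\CC$ and the third row of $\CB$, and one must invoke the fact that the initial data $M_j(0,\xi)$ lie in $\xi^\perp$ in order to apply $\xi \times (\xi \times X) = -|\xi|^2 X$. Away from the finite set of $|\xi|$ at which two roots of \eqref{ind} coincide --- a set which is excluded in the statement and where the Vandermonde inverse is singular --- the resulting closed-form expressions are rational in $\la_1,\la_2,\la_3$ and match the stated formulas exactly. There is no serious analytic obstacle; the proof is a bookkeeping exercise in matrix multiplication organized by the three simplifications above.
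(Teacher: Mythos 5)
Your proposal is correct and follows essentially the same route as the paper: the proposition is a tidied restatement of the derivation already carried out in Section~\ref{sec3.2}, and the remaining content is precisely the matrix multiplication $\CM=\CB\CC$ organized by the three simplifications you identify (the cancellation $(\la_j^2-\la_l^2)/(\la_j+\la_l)=\la_j-\la_l$, the Vieta relation $\la_j+a|\xi|^2=-\sum_{i\neq j}\la_i$ used to put the first row of $\CB$ in its stated form, and $(-i\xi\times)(i\xi\times)=-|\xi|^2$ on $\xi^\perp$, which produces $\CM_{33}$). Your observation that the off-diagonal sign relations $\CM_{21}=-\CM_{12}$, $\CM_{31}=\CM_{13}$, $\CM_{32}=-\CM_{23}$ follow from comparing signs between rows/columns of $\CC$ and $\CB$ is exactly what the direct entrywise computation yields.
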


Now, by combining two representations of the Green's functions to the fluid part and the electromagnetic part obtained in Proposition \ref{prop.n} and Proposition \ref{prop.m} respectively, we have

\begin{theorem}\label{thm.gr}
Let $U=[n,u,E,B]$ satisfy the system \eqref{leq} for all $t>0$, $x\in \R^3$ with initial data $U|_{t=0}=U_0=[n_0,u_0,E_0,B_0]$. Assume that the solution $U$ is written as
\begin{equation*}
    U=G\ast U_0
\end{equation*}
for the Green's function $G=G(t,x)$. Then, under the decomposition
\begin{equation*}
    U=U^{(1)}+U^{(2)},\ \  U_0=U^{(1)}_0+U^{(2)}_0
\end{equation*}
with
\begin{equation*}
 U^{(1)}=\left(\begin{array}{c}
      \hat{n}\\
      \hat{u}_\parallel\\
      \hat{E}_\parallel\\
      0
    \end{array}\right),\
 U^{(1)}_0=\left(\begin{array}{c}
      \hat{n}_0\\
      \hat{u}_{0,\parallel}\\
      \hat{E}_{0,\parallel}\\
      0
    \end{array}\right),\
 U^{(2)}=\left(\begin{array}{c}
       0\\
      \hat{u}_\perp\\
      \hat{E}_\perp\\
      \hat{B}_\perp
    \end{array}\right),\
 U^{(2)}_0=\left(\begin{array}{c}
       0\\
      \hat{u}_{0,\perp}\\
      \hat{E}_{0,\perp}\\
      \hat{B}_{0,\perp}
    \end{array}\right),
\end{equation*}
there are two Green's functions $G^{(1)}$, $G^{(2)}$ defined by
\begin{equation*}
    \hat{G}^{(1)}(t,k)=\left(\begin{array}{c|c}
       \CN(t,k) & \\
       \hline
        & \ 0
    \end{array}\right),\ \
    \hat{G}^{(2)}(t,k)=\left(\begin{array}{c|c}
      0 & \\
       \hline
        & \ \CM(\be t,\frac{k}{\be})
    \end{array}\right)
\end{equation*}
except a finite number of values of $|k|$, such that
\begin{equation*}
    U^{(j)}=G^{(j)}\ast  U^{(j)}_0,\ \ j=1,2.
\end{equation*}

\end{theorem}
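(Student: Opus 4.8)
The plan is to read Theorem \ref{thm.gr} as the assembly of the two explicit solution formulas of Proposition \ref{prop.n} and Proposition \ref{prop.m} through a Helmholtz-type splitting in frequency, and then to pass from Fourier multipliers back to convolution kernels. First I would fix $k\neq 0$, set $\tilde k=k/|k|$, and introduce the orthogonal projections $P_\parallel=\tilde k\otimes\tilde k$ and $P_\perp=\FI_3-\tilde k\otimes\tilde k$ acting on the vector unknowns $\hat u,\hat E,\hat B$, leaving the scalar $\hat n$ alone. The constraint $k\cdot\hat B=0$ in \eqref{leq.f} forces $\hat B_\parallel=0$, so $\hat B=\hat B_\perp$, while the constraint $ik\cdot\hat E=-\tfrac{\be}{\ga}\hat n$ links $\hat n$ to $\hat E_\parallel$ only. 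Applying $P_\parallel$ and $P_\perp$ to the evolution equations of \eqref{leq.f}, and using that $ik\times(\cdot)$ annihilates parallel vectors and always produces vectors orthogonal to $k$, while $ik\cdot(\cdot)$ annihilates perpendicular vectors, exhibits \eqref{leq.f} as the direct sum of a closed system for the fluid triple $(\hat n,\hat u_\parallel,\hat E_\parallel)$ and a closed system for the electromagnetic triple $(\hat u_\perp,\hat E_\perp,\hat B_\perp)$; one checks moreover that the constraint relating $\hat n$ and $\hat E_\parallel$ is propagated by the fluid subsystem. These are precisely the two subsystems already integrated in the two preceding subsections.

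Next, by linearity and the uniqueness of solutions to these two decoupled linear ODE systems (for each frozen $k$), the solution issued from data $U_0=U_0^{(1)}+U_0^{(2)}$ is the sum $U=U^{(1)}+U^{(2)}$, where $U^{(1)}$ solves the fluid system with data $U_0^{(1)}$ and $U^{(2)}$ the electromagnetic system with data $U_0^{(2)}$. Proposition \ref{prop.n} then yields $\hat U^{(1)}(t,k)=\hat G^{(1)}(t,k)\hat U_0^{(1)}(k)$ with $\hat G^{(1)}$ the stated block matrix built from $\CN(t,k)$, and Proposition \ref{prop.m} yields $\hat U^{(2)}(t,k)=\hat G^{(2)}(t,k)\hat U_0^{(2)}(k)$ with the block built from $\CM(\be t,k/\be)$; here the $\CN$-block is understood to act on the scalar-plus-parallel components and the $\CM$-block on the perpendicular components, i.e.\ the frequency projections $P_\parallel$, $P_\perp$ are built into the multipliers. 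These identities hold off the finite set of values of $|k|$ for which the indicial equations \eqref{gf.n.ind} or \eqref{ind} acquire multiple roots; this set is a union of finitely many spheres, hence of measure zero, and therefore irrelevant for the definition of a Fourier multiplier. Taking the inverse Fourier transform converts the multiplications into the convolutions $U^{(j)}=G^{(j)}\ast U_0^{(j)}$, $j=1,2$, and adding them back recovers $U=G\ast U_0$ with $\hat G=\hat G^{(1)}+\hat G^{(2)}$ once the projections are inserted.

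The one point that genuinely needs care — and the step I expect to be the main obstacle — is legitimizing the passage from pointwise matrix formulas to bona fide convolution kernels, i.e.\ checking that $\hat G^{(1)}(t,\cdot)$ and $\hat G^{(2)}(t,\cdot)$ are honest Fourier multipliers rather than merely pointwise-defined matrices. For this I would invoke the uniform bound $|\hat U(t,k)|\lesssim|\hat U_0(k)|$ supplied by Theorem \ref{thm.ly}, which shows $\hat G(t,\cdot)\in L^\infty(\R^3)$, so that $G(t,\cdot)$ is a tempered distribution and convolution with it is well defined on $L^2$ and on $H^N$; combined with the measure-zero exceptional set this makes $U^{(j)}=G^{(j)}\ast U_0^{(j)}$ meaningful. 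One must also verify that each entry of $\CN$ and of $\CM$, pre-composed with the bounded projections $P_\parallel$, $P_\perp$, extends to a bounded measurable function of $k\in\R^3$: the apparent singularities of the quotients in $\CN$ and $\CM$ at $k=0$ and at the exceptional $|k|$ cancel, as is already visible from the symmetric-function identities $\la_++\la_-=-\mu|k|^2$, $\la_+\la_-=\ga^2|k|^2+\be^2$ and $\sum_j\la_j=-a|\xi|^2$, $\sum_{i<j}\la_i\la_j=|\xi|^2+1$, $\la_1\la_2\la_3=-a|\xi|^4$ used above. Apart from this routine verification, no further obstacle is expected, since the substantive content — the decoupling and the explicit formulas — is already in hand from Propositions \ref{prop.n} and \ref{prop.m}.
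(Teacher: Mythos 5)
Your proposal reconstructs exactly the argument the paper intends: the Helmholtz split of \eqref{leq.f} into the longitudinal fluid triple $(\hat n,\hat u_\parallel,\hat E_\parallel)$ and the transversal electromagnetic triple $(\hat u_\perp,\hat E_\perp,\hat B_\perp)$ (using $\hat B_\parallel=0$ and that $ik\times$ annihilates longitudinal vectors) is precisely what is carried out in the two preceding subsections, and Theorem \ref{thm.gr} is then the block assembly of Propositions \ref{prop.n} and \ref{prop.m}. Your additional care about the exceptional $|k|$-spheres and the $L^\infty$ multiplier status via Theorem \ref{thm.ly} is sound but goes beyond what the paper records, which simply states the theorem as an immediate consequence of the two propositions.
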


\section{Solve a third order ODE}\label{sec4}

In this section we study the property of roots of the indicate equation
\begin{equation*}
    z^3+a |\xi|^2 z^2 +(|\xi|^2+1)z+a |\xi|^4=0,\ \ z\in \C.
\end{equation*}
Here, $a>0$ is a constant. In what follows,  set
\begin{equation*}
  r:=|\xi|^2>0,
\end{equation*}
and
\begin{equation*}
    g(z):=z^3+a r z^2 +(r+1)z+a r^2.
\end{equation*}

\subsection{Elementary properties}\label{sec4.1}

We list some elementary properties for roots of $g(z)=0$ as follows.

\medskip

\noindent{\it Property 1.}
Consider the existence of a real root for $g(z)=0$ with $z\in \R$.
Check that
\begin{itemize}
  \item $g(0)=a r^2> 0$;
  \item $g(-ar)=(-ar)^3+a^3r^3-ar(r+1)+a r^2=-ar< 0$;
  \item $g'(z)=3z^2+2a rz +(r+1)>r+1$ for $z\geq 0$;
  \item $g'(z)=z^2+2z(z+a r) +(r+1)>r+1$ for $z\leq -ar$;
  \item $g(z)$ is strictly increasing over $\{z\leq -ar\}\cup \{z\geq 0\}$.
\end{itemize}
The above observation implies that equation $g(z)=0$ has at least  one real root denoted by
$\si=\si(r)\in \R$, which satisfies $-ar<\si(r)<0$ for all $r> 0$.
Moreover, suppose that $\si=\si(r)\in (-ar,0)$ is a real root of $g(z)=0$. Then
\begin{equation*}
    g(\si)=\si^2(\si+ar)+\si (r+1)+a r^2=0.
\end{equation*}
Since $\si^2(\si+ar)>0$, then $\si (r+1)+a r^2<0$ holds, i.e.
\begin{equation*}
    \si<-\frac{a r^2}{r+1}.
\end{equation*}
Therefore, any real root $\si=\si(r)$ of $g(z)=0$ actually satisfies
\begin{equation*}
    -ar<\si(r)<-\frac{a r^2}{r+1},\ \ \forall\,r> 0.
\end{equation*}

\medskip
\noindent{\it Property 2.} Let $r>\frac{1}{a}$. Then
$-ar+\frac{1}{ar}\in (-ar,0)$.
Compute
\begin{equation*}
    g(-ar+\frac{1}{ar})=(-ar+\frac{1}{ar})^3+ar(-ar+\frac{1}{ar})^2+(r+1)(-ar+\frac{1}{ar})+ar^2
\end{equation*}
to obtain
\begin{equation*}
   g(-ar+\frac{1}{ar})=\frac{1}{a}\left[1-\frac{1}{r}(1+\frac{1}{ar})(1-\frac{1}{ar})\right].
\end{equation*}
Notice
\begin{equation*}
    \sup\limits_{0<\frac{1}{ar}<1}\frac{1}{ar}(1+\frac{1}{ar})(1-\frac{1}{ar})
=\frac{1}{\sqrt{3}}(1+\frac{1}{\sqrt{3}})(1-\frac{1}{\sqrt{3}})=\frac{2}{3\sqrt{3}}.
\end{equation*}
Then, whenever
\begin{equation*}
    1-\frac{2}{3\sqrt{3}}a>0,\ \ \text{i.e.}\ \ a<\frac{3\sqrt{3}}{2},
\end{equation*}
one has
\begin{equation*}
  g(-ar+\frac{1}{ar})>0.
\end{equation*}
Thus, it follows  that if $a<\frac{3\sqrt{3}}{2}$, then whenever $r>\frac{1}{a}$,  equation $g(z)=0$ has at least  one real root, denoted still by $\si=\si(r)$, satisfying
\begin{equation*}
    -ar<\si(r)<-ar+\frac{1}{ar},\ \ \forall\,r> \frac{1}{a}.
\end{equation*}

\medskip
\noindent{\it Property 3.} Let $\si\in (-ar,0)$ be a real root of $g(z)=0$. Consider the sign of
\begin{equation*}
    g'(\si)=3\si^2+2ar\si+(1+r)
\end{equation*}
by the following two cases.

\medskip
\noindent{\it Case i.} Write
\begin{equation*}
    g'(\si)=3\si^2+r(2a\si+1)+1.
\end{equation*}
If $2a\si+1\geq 0,\ \ \text{i.e.}\ \ \si\geq -\frac{1}{2a}$,
then $g'(\si)\geq 1$.

\medskip
\noindent{\it Case ii.} Using $g(\si)=0$, compute
\begin{eqnarray*}
  g'(\si) &=& \frac{[3\si^2+r(2a\si+1)+1]\si}{\si}\\
  &=&\frac{3\si^3+2ar\si^2+(1+r)\si}{\si} \\
  &=&\frac{-3[ar\si^2+(1+r)\si+ar^2]+2ar\si^2+(1+r)\si}{\si}\\
  &=&\frac{-ar \si^2-2(1+r)\si-3ar^2}{\si}.
\end{eqnarray*}
Write
\begin{equation*}
   -ar \si^2-2(1+r)\si-3ar^2=-ar(\si^2-2)-2(1+r)(\si+ar)-ar^2.
\end{equation*}
If
$\si^2-2\geq 0,\ \ \text{i.e.}\ \ \si\leq -\sqrt{2}$,
then $g'(\si)>0$.

\medskip
Let
$-\frac{1}{2a}\leq  -\sqrt{2}, \ \ \text{i.e.}\ \ a\leq \frac{\sqrt{2}}{4}$.
Then, from case i and case ii, one has
$g'(\si)>0$.
This implies that if $a\leq \frac{\sqrt{2}}{4}$ then equation $g(z)=0$ has one and only one real root $\si=\si(r)$. Moreover, the real root $\si(r)$ is strictly decreasing in $r$ if  $a\leq \frac{\sqrt{2}}{4}$. In fact, since
\begin{equation*}
    g(\si)=\si^3+ar\si^2+(r+1)\si+ar^2=0,
\end{equation*}
then
\begin{equation*}
    g'(\si)\frac{d\si}{dr}+(a\si^2+\si+2ar)=0,
\end{equation*}
that is,
\begin{equation*}
  \frac{d\si}{dr}=\frac{-a\si^2-\si-2ar}{g'(\si)}.
\end{equation*}
From
\begin{equation*}
  -a\si^2-\si-2ar=-a\si^2-(\si+ar)-ar ,\ \ -ar<\si<0,
\end{equation*}
one can see
\begin{equation*}
      -a\si^2-\si-2ar<0.
\end{equation*}
Therefore,
\begin{equation*}
   \frac{d\si}{dr}<0,\ \ \forall\,r>0.
\end{equation*}

\medskip
\noindent{\it Property 4.} Let $\si=\si(r)\in (-ar,0)$ be one real root of $g(z)=0$. Then
\begin{equation*}
    g(z)=(z-\si)[z^2+(\si+ar)z+(\si^2+ar\si+r+1)]=0.
\end{equation*}
The rest two roots, denoted by $\chi_\pm$, satisfy the equation
\begin{equation*}
  z^2+(\si+ar)z+(\si^2+ar\si+r+1)=0.
\end{equation*}
Thus $\chi_\pm$ is solved as
\begin{equation*}
    \chi_\pm=-\frac{1}{2}(\si+ar)\pm\frac{1}{2}\sqrt{(\si+ar)^2-4(\si^2+ar\si+r+1)}.
\end{equation*}
To the end, we set $\varphi$ to be a function of $\si$ given by
\begin{equation*}
    \varphi=3\si^2+2ar\si-a^2r^2+4(r+1).
\end{equation*}
It is straightforward to check
\begin{equation*}
    \chi_\pm=-\frac{1}{2}(\si+ar)\pm\frac{1}{2}\sqrt{-\varphi}.
\end{equation*}

\noindent{\bf Claim:} {\it If $a<\frac{\sqrt{2}}{4}$ then $\varphi>0$ for all $r>0$, and moreover
\begin{equation*}
    \frac{3}{a^2r^2}+4r\leq \varphi\leq 4(1+r),\ \ \forall\, r>\frac{\sqrt{6}}{2a}.
\end{equation*}}

\noindent{\it Proof of Claim.} Let $a<\frac{\sqrt{2}}{4}$. From Properties 1-3, $\si\in (-ar,0)$ is one and only one real root of $g(z)=0$ and satisfies
\begin{equation*}
    -ar<\si(r)<-ar+\frac{1}{ar},\ \ \forall\, r>\frac{1}{a}.
\end{equation*}
Write
\begin{equation*}
    \varphi=3\si^2+2ar(\si+ar)+r(4-3a^2r)+4.
\end{equation*}
Then, whenever $4-3a^2r\geq 0$, i.e. $r\leq \frac{4}{3a^2}$,
$\varphi\geq 4$ holds.
On the other hand, whenever $r>\frac{1}{a}$ and $\frac{1}{ar}\leq \frac{2ar}{3}$, i.e. $r\geq \frac{\sqrt{6}}{2a}$, one has
\begin{eqnarray*}
 0\geq  3\si^2+2ar\si-a^2r^2 &\geq & 3(-ar+\frac{1}{ar})^2+2ar(-ar+\frac{1}{ar})-a^2r^2\\
 &=& 3(a^2r^2-2+\frac{1}{a^2r^2})-2a^2r^3+2-a^2r^2\\
 &=&-4+\frac{3}{a^2r^2},
\end{eqnarray*}
which implies
\begin{equation*}
   4(1+r)\geq 3\si^2+2ar\si-a^2r^2+  4(1+r)=\varphi\geq  \frac{3}{a^2r^2}+4r.
\end{equation*}
Since
$\frac{\sqrt{6}}{2a}\leq  \frac{4}{3a^2}$
is equivalent with $a\leq \frac{4\sqrt{6}}{9}$ and the inequality $ \frac{4\sqrt{6}}{9}>\frac{\sqrt{2}}{4}$ holds, $\varphi>0$
holds true for any $r>0$. Claim follows.

\medskip

Therefore, when $a\leq \frac{\sqrt{2}}{4}$, equation $g(z)=0$ has one real root $\si$ and two non-real conjugate roots $\chi_{\pm}$ with
\begin{equation*}
    \chi_\pm=-\frac{1}{2}(\si+ar)\pm\frac{1}{2}\sqrt{\varphi}i.
\end{equation*}

\subsection{Representation of roots}\label{sec4.2}

We shall consider a representation of all roots $z\in \C$ of $g(z)=0$. For this moment, although we have known that there is at least one real root, it is not clear whether there exists an exact condition to assure its uniqueness. Recall the equation
\begin{equation*}
    g(z)=z^3+c_2z^2 +c_1z+c_0=0,\ \ c_2=a r,\ c_1=r+1,\ c_0=a r^2.
\end{equation*}
Define
\begin{eqnarray*}
  S &=& 2c_2^3-9c_2c_1 +27 c_0=2a^3r^3-9ar(r+1)+27 ar^2,\\
  R &=& S^2-4(c_2^2-3c_1)^3\\
  &=&[2a^3r^3-9ar(r+1)+27 ar^2]^2-4[a^2r^2-3(r+1)]^3.
\end{eqnarray*}
Then, there are two distinct cases:
\begin{itemize}
  \item $ R>0$ in which case there are one real root and two non-real conjugate roots;
  \item $ R\leq 0$ in which case there are three real roots.
\end{itemize}
The roots of $g(z)=0$ can be written as
\begin{eqnarray*}
  \la_1 &=& -\frac{1}{3}\left(c_2+\sqrt[3]{\frac{S+\sqrt{R}}{2}}+\sqrt[3]{\frac{S-\sqrt{R}}{2}}\right),\\
  \la_2 &=& -\frac{1}{3}\left(c_2+\om\sqrt[3]{\frac{S+\sqrt{R}}{2}}+\bar{\om}\sqrt[3]{\frac{S-\sqrt{R}}{2}}\right),\\
  \la_3 &=& -\frac{1}{3}\left(c_2+\bar{\om}\sqrt[3]{\frac{S+\sqrt{R}}{2}}+\om\sqrt[3]{\frac{S-\sqrt{R}}{2}}\right),
\end{eqnarray*}
where
\begin{equation*}
    \om=-\frac{1}{2}+\frac{1}{2}\sqrt{3}i,\ \ \bar{\om}=-\frac{1}{2}-\frac{1}{2}\sqrt{3}i.
\end{equation*}
When $R<0$, we set
\begin{eqnarray*}
  \sqrt[3]{\frac{S+\sqrt{R}}{2}}&=&\sqrt[3]{\frac{S+i\sqrt{-R}}{2}}=(\frac{S^2-R}{4})^{1/6}e^{\frac{i\theta}{3}}
=(c_2^2-3c_1)^{1/2}e^{\frac{i\theta}{3}},\\
  \sqrt[3]{\frac{S-\sqrt{R}}{2}}&=&\sqrt[3]{\frac{S-i\sqrt{-R}}{2}}=(\frac{S^2-R}{4})^{1/6}e^{\frac{-i\theta}{3}}
=(c_2^2-3c_1)^{1/2}e^{-\frac{i\theta}{3}},
\end{eqnarray*}
where
\begin{equation*}
    \theta=\arccos\frac{S}{\sqrt{S^2-R}}.
\end{equation*}
Notice $c_2^2-3c_1>0$ if $R<0$, that is,
\begin{equation*}
    R= S^2-4(c_2^2-3c_1)^3<0.
\end{equation*}
In this way, it is easy to see that for $R<0$,
\begin{equation*}
  \sqrt[3]{\frac{S+\sqrt{R}}{2}},\ \  \sqrt[3]{\frac{S-\sqrt{R}}{2}}
\end{equation*}
are a pair of conjugate complex numbers, and  all roots $\la_i$, $i=1,2,3$, are real .

Let's compute $R$. In fact,
\begin{equation*}
    R=27[4(1+r)^3+a^2r^2(8r^2-20r-1)+4a^4r^5].
\end{equation*}
It is straightforward to observe that whenever $r>0$ is sufficiently small or large, $R>0$ holds true. In addition, since $R$ can be also written as
\begin{equation*}
    R=27[4+12r+(12-a^2)r^2+(4-20a^2)r^3+8a^2r^4+4a^4r^5],
\end{equation*}
then, as long as $a\leq \frac{1}{\sqrt{5}}$, one also has $R>0$ for all $r>0$.

\subsection{Asymptotic of roots at $0$ and $\infty$}

Suppose that there are $\eps=\eps(a)>0$, $L=L(a)>0$ such that $R>0$ over $r\leq \eps(a)$ and $r\geq L(a)$. Then, over $\{r\leq \eps(a)\}\cup\{r\geq L(a)\}$, equation $g(z)=0$ has only one real root $\si$ and two non-real conjugate complex roots $\chi_\pm$, and thus we let
\begin{equation*}
    \la_1=\si,\ \ \la_{2,3}=\chi_\pm.
\end{equation*}

\medskip
\noindent{\it Case when $r\leq \eps(a)$:} $\la_1=\si=\si(r)$ satisfies
\begin{equation*}
    g(\si)=\si^3+ar\si+(1+r)\si+ar^2=0.
\end{equation*}
Set
\begin{equation*}
    \si(r)=\sum_{m=0}^\infty a_mr^m
\end{equation*}
as $r\to 0$. One can compute
\begin{equation*}
    a_0=a_1=0,\ a_2=-a,\ a_3=a,\ a_4=-a,\ a_5=a(1-a^2).
\end{equation*}
Thus, the real root $\si$ can be written  as
\begin{equation*}
    \si(r)=-ar^2+ar^3-ar^4+a(1-a^2)r^5+\cdots=-ar^2(1-r+O(1)r^2)
\end{equation*}
as $r\to 0$. Recall that two non-real roots $\chi_\pm$ take the form of
\begin{equation*}
    \chi_\pm=-\frac{\si+ar}{2}\pm\frac{1}{2}\sqrt{\varphi}i.
\end{equation*}
Then, it follows
\begin{eqnarray*}
  \Re\, \chi_+ &=& -\frac{-ar^2+ar^3-O(1)r^4+ar}{2}=-\frac{ar(1-r+O(1)r^2)}{2},\\
  \Im\,\chi_+ &=&\frac{1}{2}\sqrt{3\si^2+2ar\si-a^2r^2+4(1+r)}\\
  &=& \frac{1}{2}\sqrt{3a^2r^4(1-r+O(1)r^2)^2-2a^2r^3(1-r+O(1)r^2)-a^2r^2+4(1+r)}\\
  &=& \frac{1}{2}\sqrt{-a^2r^2-O(1)r^3+4(1+r)}\\
  &=&\sqrt{1+r-O(1)r^2},
\end{eqnarray*}
which implies
\begin{equation*}
    \chi_\pm=-\frac{ar(1-r+O(1)r^2)}{2}\pm i\sqrt{1+r-O(1)r^2}
\end{equation*}
as $r\to 0$.

\medskip
\noindent{\it Case when $r\geq L(a)$:} Notice that
\begin{equation*}
    g(\si)=\si^3+ar\si+(1+r)\si+ar^2=0
\end{equation*}
is equivalent with
\begin{equation*}
    (\frac{\si}{r})^3+a (\frac{\si}{r})^2+\frac{1+r}{r^2}\frac{\si}{r} +\frac{a}{r}=0.
\end{equation*}
Set
\begin{equation*}
    \frac{\si}{r}=\sum_{m=0}^\infty a_mr^{-m}
\end{equation*}
as $r\to \infty$. One can check
\begin{equation*}
    a_0=-a,\ a_1=0,\ a_2=\frac{1}{a},\ a_3=-\frac{1}{a^3},\ a_4=\frac{1}{a^3}(1+\frac{1}{a}),\ \ a_5=-\frac{1}{a^5}(4+\frac{1}{a}).
\end{equation*}
Then, the real root $\si$ can be written  as
\begin{eqnarray*}
  \frac{\si}{r} = -a+\frac{1}{a}r^{-2}-\frac{1}{a^3}r^{-3}+ \frac{1}{a^3}(1+\frac{1}{a})r^{-4}
  -\frac{1}{a^3}(1+\frac{1}{a})r^{-5}+\cdots,
\end{eqnarray*}
that is,
\begin{equation*}
    \si=-ar+\frac{1}{ar} (1-\frac{1}{a^2r}+\frac{O(1)}{r^2})
\end{equation*}
as $r\to\infty$. For non-real roots, let's  further compute
\begin{eqnarray*}
  \Re \chi_\pm &=& -\frac{\si+ar}{2} =-\frac{1}{2ar}(1-\frac{1}{a^2r}+\frac{O(1)}{r^2}),
\end{eqnarray*}
and
\begin{eqnarray*}
  \varphi &=&  3\si^2+2ar\si-a^2r^2+4(1+r)\\
  &=&3\si (\si+ar)-ar(\si+ar)+4(1+r)\\
  &=&r(\frac{3\si}{r}-a)(\si+ar)+4(1+r)\\
  &=& r\left[-3a+\frac{3}{ar^2}(1-\frac{1}{a^2r}+\frac{O(1)}{r^2})-a\right]\frac{1}{ar} (1-\frac{1}{a^2r}+\frac{O(1)}{r^2})+4(1+r)\\
  &=&\left[-4+\frac{3}{a^2r^2}(1-\frac{1}{a^2r}+\frac{O(1)}{r^2})\right] (1-\frac{1}{a^2r}+\frac{O(1)}{r^2})+4(1+r)\\
  &=&-4(1-\frac{1}{a^2r}+\frac{O(1)}{r^2})+\frac{3}{a^2r^2}(1-\frac{1}{a^2r}+\frac{O(1)}{r^2})^2+4(1+r)\\
  &=&4r+O(1)r^{-1}.
\end{eqnarray*}
Therefore, one has
\begin{equation*}
    \chi_\pm=-\frac{1}{2ar}(1-\frac{1}{a^2r}+\frac{O(1)}{r^2})\pm i \sqrt{r+O(1)r^{-1}}
\end{equation*}
as $r\to\infty$.

\medskip

We conclude this section with a remark. The asymptotic behavior of roots $\la_i(r)$ at origin and infinity exactly determines the same thing for the Fourier transform $\CM$ of the Green's function of the electromagnetic part. However, $\CM$ is not defined at a finite number of values of $r=|\xi|^2=|k|^2/\be$ where $R=0$, that is, possible multiple real roots occur. Therefore, to estimate $\CM$ over the whole frequency domain, one has to characterize the behavior of all roots  $\la_i(r)$ in the domain $\{\eps(a)\leq r\leq L(a)\}$, particularly over some small neighborhood of a finite number of points such that $R=0$; we shall clarify this later on.  Notice that this issue is different from that in \cite{Du-EM} where the situation is simpler due to the fact that all roots are far from the imaginary axis over  $\{\eps(a)\leq r\leq L(a)\}$ and multiple roots do not happen.

\section{Pointwise estimate on the Green's function}\label{sec5}

In this section, we consider the pointwise estimates on $\CN$ and $\CM$ which are the Fourier transforms of the Green's function to the fluid part and electromagnetic part, respectively. The most difficult part is the analysis of $\CN$ and $\CM$ in the finite domain far from origin and infinity, which turn out to decay in time exponentially.  We then use these estimates to derive the corresponding pointwise estimates on the Fourier transform of solutions to the linearized Navier-Stokes-Maxwell system \eqref{leq}, and further obtain the time decay property of each component in the solution.

\subsection{Pointwise bound on the fluid part}

Recall that $\la_\pm$ satisfies the indicate equation
\begin{equation*}
    z^2+\mu |k|^2 z + (\ga^2|k|^2+\be^2)=0,
\end{equation*}
and
\begin{equation*}
    \la_\pm=-\frac{1}{2}\mu |k|^2\pm \frac{1}{2} \sqrt{\psi},\ \ \psi:=\mu^2 |k|^4-4(\ga^2|k|^2+\be^2).
\end{equation*}
Divide the whole space $\R^3$ by three subdomains as
\begin{eqnarray*}
&\dis   \R^3=\D_0\cup\D_1\cup\D_\infty,\\
&\dis   \D_0 =  \{|k|\leq \eps\},\ \
  \D_1 =\{\eps\leq |k|\leq L\},\ \
  \D_\infty = \{|k|\geq L\},
\end{eqnarray*}
and further set
\begin{eqnarray*}
&\dis \D_1=\D_1^-\cup\D_1^+,\\
&\dis \D_1^-=\{\eps\leq |k|\leq L,\psi\leq 0\},\ \ \D_1^+=\{\eps\leq |k|\leq L, \psi\geq 0\}.
\end{eqnarray*}

\medskip
\noindent{\it Case when $|k|$ is near $0$:} In this case,
as $|k|\to 0$,
\begin{equation*}
    \la_\pm=-\frac{1}{2}\mu |k|^2\pm (\be + O(1)|k|^2)i
\end{equation*}
and
\begin{equation*}
    \la_+-\la_-=\sqrt{-\psi}i,\ \ \la_++\la_-=-\mu |k|^2,\ \ \la_+\la_-=\ga^2|k|^2+\be^2.
\end{equation*}
Notice that in $\CN_{11},\CN_{33}$,
\begin{equation*}
    \frac{\la_+ e^{\la_- t }-\la_- e^{\la_+ t}}{\la_+ - \la_-}=-\frac{\la_+\la_-}{\la_+-\la_-}(\frac{e^{\la_+ t}}{\la_+}-\frac{e^{\la_- t}}{\la_-})=-\frac{\ga^2|k|^2+\be^2}{\sqrt{-\psi}} 2\Im \frac{e^{\la_+ t}}{\la_+}.
\end{equation*}
Then, $\CN_{11},\CN_{33}$ can be estimated as
\begin{equation*}
    |\CN_{11}|+|\CN_{33}|\lesssim e^{-\frac{1}{2}\mu |k|^2 t}.
\end{equation*}
In the similar way, it holds that
\begin{equation*}
   |\CN_{22}|+ |\CN_{23}|\lesssim e^{-\frac{1}{2}\mu |k|^2 t},\ \   |\CN_{12}|\lesssim |k|e^{-\frac{1}{2}\mu |k|^2 t}.
\end{equation*}

\medskip
\noindent{\it Case when $|k|$ is near $\infty$:} In this case, one can compute that
as $|k|\to \infty$,
\begin{eqnarray*}
  \la_\pm &=& -\frac{1}{2}\mu |k|^2 \pm \frac{1}{2}\mu |k|^2 (1-\frac{4(\ga^2|k|^2+\be^2)}{\mu^2|k|^4})^{1/2} \\
 &=& -\frac{1}{2}\mu |k|^2 \pm \frac{1}{2}\mu |k|^2 (1-\frac{2(\ga^2|k|^2+\be^2)}{\mu^2|k|^4}-O(1)[\frac{\ga^2|k|^2+\be^2}{\mu^2|k|^4}]^2)\\
&=& -\frac{1}{2}\mu |k|^2 \pm \frac{1}{2}\mu |k|^2 (1-\frac{2\ga^2}{\be^2}|k|^{-2}-O(1)|k|^{-4}),
\end{eqnarray*}
which implies
\begin{eqnarray*}
  \la_- &=&  -\frac{1}{2}\mu |k|^2-\frac{1}{2}\mu |k|^2+\frac{\mu\ga^2}{\be^2}+O(1)|k|^{-2}=-O(1)|k|^2,\\
\la_+ &=& -\frac{1}{2}\mu |k|^2+\frac{1}{2}\mu |k|^2-\frac{\mu\ga^2}{\be^2}-O(1)|k|^{-2}=-\frac{\mu\ga^2}{\be^2}-O(1)|k|^{-2}.
\end{eqnarray*}
Therefore, one has
\begin{eqnarray*}
    |\CN_{11}|+|\CN_{33}|&\leq &\frac{|\la_+\la_-|}{|\la_+-\la_-|}(\frac{e^{\la_+ t}}{|\la_+|}+\frac{e^{\la_- t}}{|\la_-|})\\
&\leq & \frac{\ga^2 |k|^2+\be^2}{\sqrt{\psi}} (\frac{e^{-(\frac{\mu\ga^2}{\be^2}+O(1)|k|^{-2})t}}{\frac{\mu\ga^2}{\be^2}+O(1)|k|^{-2}}
+\frac{e^{-O(1)|k|^2 t}}{O(1)|k|^2})\\
&\lesssim & e^{-O(1)t} +\frac{1}{|k|^2}e^{-O(1)|k|^2 t},
\end{eqnarray*}
and
\begin{eqnarray*}
    |\CN_{22}|&\leq &\frac{1}{|\la_+-\la_-|}(|\la_+|e^{\la_+ t}+|\la_-|e^{\la_- t})\\
&\leq & \frac{1}{\sqrt{\psi}} ((\frac{\mu\ga^2}{\be^2}+O(1)|k|^{-2})e^{-(\frac{\ga^2}{\be^2}+O(1)|k|^{-2})t}
+O(1)|k|^2e^{-O(1)|k|^2 t})\\
&\lesssim& \frac{1}{|k|^2}e^{-O(1)t} +e^{-O(1)|k|^2 t}.
\end{eqnarray*}
In the similar way, one can also obtain
\begin{eqnarray*}
    |\CN_{12}|&\leq &\frac{\ga |k|}{|\la_+-\la_-|}(e^{\la_+ t}+e^{\la_- t})\\
&\leq & \frac{\ga |k|}{\sqrt{\psi}} (e^{-(\frac{\mu\ga^2}{\be^2}+O(1)|k|^{-2})t}
+e^{-O(1)|k|^2 t})\\
&\lesssim & \frac{1}{|k|}e^{-O(1)t} +\frac{1}{|k|}e^{-O(1)|k|^2 t},
\end{eqnarray*}
and
\begin{eqnarray*}
    |\CN_{23}|
&\lesssim & \frac{1}{|k|^2}e^{-O(1)t} +\frac{1}{|k|^2}e^{-O(1)|k|^2 t}.
\end{eqnarray*}

\medskip
\noindent{\it Case when $|k|$ is Far from $0$ and $\infty$:} In this case, $\la_+$ tends to $\la_-$ as $|k|$ is properly chosen such that $\psi\to 0$. Still consider first the estimate on $\CN_{11},\CN_{33}$. Notice
\begin{equation*}
    \frac{\la_+ e^{\la_- t }-\la_- e^{\la_+ t}}{\la_+ - \la_-}=-\la_+\la_-\int_0^1\frac{d}{d\la} (\frac{e^{\la t}}{\la})|_{\la=(1-s)\la_-+s\la_+} ds,
\end{equation*}
where
\begin{equation*}
 \frac{d}{d\la} (\frac{e^{\la t}}{\la})=\frac{(t\la -1) e^{\la t}}{\la^2}.
\end{equation*}
Then, it follows
\begin{equation*}
    |\CN_{11}|+|\CN_{33}|\leq |\la_+\la_-|\int_0^1
\frac{t|(1-s)\la_-+s\la_+|+1}{|(1-s)\la_-+s\la_+|^2}e^{t \Re\{(1-s)\la_-+s\la_+\}}ds.
\end{equation*}
We make further estimates over two subdomains $\D_1^+$ and $\D_1^-$ as follows.

\medskip
\noindent Over $\D_{1}^+=\{\eps\leq |k|\leq L, \psi\geq 0\}$: $\la_\pm<0$ are real, which shows
\begin{equation*}
   (1-s)\la_-+s\la_+<0,\ 0\leq s\leq 1.
\end{equation*}
Thus, one has
\begin{multline*}
   |\CN_{11}|+|\CN_{33}|\lesssim  \int_0^1\frac{t\max|(1-s)\la_-+s\la_+|+1}{\min|(1-s)\la_-+s\la_+|^2}e^{t \min\Re\{(1-s)\la_-+s\la_+\}}ds\\
\lesssim  (1+t)e^{-O(1)t}
\lesssim  e^{-O(1)t}.
\end{multline*}
where $\max$, $\min$ are taken over $\{k\in\D_1^+,0\leq s\leq 1\}$.

\medskip
\noindent
Over $\D_{1}^-=\{\eps\leq |k|\leq L, \psi\leq 0\}$: $\la_\pm$ are complex conjugate, which shows
\begin{equation*}
    \Re \{(1-s)\la_-+s\la_+\}=\Re \la_\pm=-\frac{1}{2}\mu|k|^2<0.
\end{equation*}
Then, in the similar way, one has
\begin{multline*}
   |\CN_{11}|+|\CN_{33}|\lesssim  \int_0^1\frac{t\max|(1-s)\la_-+s\la_+|+1}{\min| \Re \{(1-s)\la_-+s\la_+\}|^2}e^{t\Re \{(1-s)\la_-+s\la_+\}}ds\\
\lesssim (1+t)e^{-O(1)t}
\lesssim  e^{-O(1)t}.
\end{multline*}
where $\max$, $\min$ are taken over $\{k\in\D_1^-,0\leq s\leq 1\}$.

\medskip
Therefore, from both cases above, over $\D_1=\D_1^+\cup\D_1^-$, it holds
\begin{equation*}
     |\CN_{11}|+|\CN_{33}|\lesssim e^{-O(1)t}.
\end{equation*}
In the completely same way, over  $\CD_1$,
\begin{equation*}
     |\CN_{22}|+|\CN_{12}|+|\CN_{23}|\lesssim e^{-O(1)t}.
\end{equation*}

\medskip

In sum, we have proved

\begin{proposition}\label{prop.n.bdd}
Let the matrix $\CN=\CN(t,k)_{7\times 7}$ be defined in Proposition \ref{prop.n}. The elements of $\CN$ have the following upper bounds for pointwise $t\geq 0$ and $k\in \R^3$.
Over $\D_0$,
\begin{equation*}
   |\CN| \lesssim  \left(\begin{array}{ccc}
                      1 & |k|d_3^T & 0 \\
                      |k|d_3 & \FI_3 &\FI_3 \\
                      0 & \FI_3 & \FI_3
                    \end{array}\right)
e^{-\frac{1}{2}\mu |k|^2 t} .
\end{equation*}
Over $\D_\infty$,
\begin{multline*}
   |\CN| \lesssim  \left(\begin{array}{ccc}
                      1 & |k|^{-1}d_3^T & 0 \\
                      |k|^{-1}d_3 & |k|^{-2}\FI_3 & |k|^{-2}\FI_3 \\
                      0 & |k|^{-2}\FI_3 & \FI_3
                    \end{array}\right)
e^{-O(1) t}\\
+ \left(\begin{array}{ccc}
                      |k|^{-2} & |k|^{-1}d_3^T & 0 \\
                      |k|^{-1}d_3 & \FI_3 & |k|^{-2}\FI_3 \\
                      0 & |k|^{-2}\FI_3 & |k|^{-2}\FI_3
                    \end{array}\right)
e^{-O(1) |k|^2 t}.
\end{multline*}
Over $\D_1$,
\begin{equation*}
   |\CN| \lesssim  \left(\begin{array}{ccc}
                      1 & d_3^T & 0 \\
                      d_3 & \FI_3 & \FI_3 \\
                      0 & \FI_3 & \FI_3
                    \end{array}\right)
e^{-O(1) t}.
\end{equation*}
Here, for simplicity, $|\CN|$ means the  matrix corresponding to $\CN$ with each element taken the absolute value. $d_3$ denotes the column vector $(1,1,1)^T$.
\end{proposition}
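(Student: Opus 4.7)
The plan is to carry out the case analysis on $\D_0,\D_1,\D_\infty$ by substituting the asymptotic behavior of the roots $\la_\pm=\la_\pm(k)$ of the indicial equation \eqref{gf.n.ind} into the explicit formulas for the matrix entries $\CN_{ij}$ from Proposition \ref{prop.n}. Throughout I will use the three identities $\la_++\la_-=-\mu|k|^2$, $\la_+\la_-=\ga^2|k|^2+\be^2$, $\la_+-\la_-=\sqrt{\psi}$ with $\psi=\mu^2|k|^4-4(\ga^2|k|^2+\be^2)$, together with the rewrite
\begin{equation*}
\frac{\la_+e^{\la_-t}-\la_-e^{\la_+t}}{\la_+-\la_-}=-\frac{\la_+\la_-}{\la_+-\la_-}\Bigl(\frac{e^{\la_+t}}{\la_+}-\frac{e^{\la_-t}}{\la_-}\Bigr),
\end{equation*}
which converts the apparent first-order singularity at $\la_+=\la_-$ into an expression that stays bounded when the two roots coalesce.

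For $k\in\D_0$ I use the expansion $\la_\pm=-\tfrac12\mu|k|^2\pm(\be+O(|k|^2))i$, from which $|\la_+-\la_-|\sim\be$ is bounded below and $\Re\la_\pm=-\tfrac12\mu|k|^2$. Plugging these into the five nontrivial entries produces the common factor $e^{-\mu|k|^2t/2}$, the additional $|k|$ in $\CN_{12}$ being inherited from its prefactor $-i\ga k$. For $k\in\D_\infty$ the asymptotics $\la_+=-\tfrac{\mu\ga^2}{\be^2}-O(|k|^{-2})$ and $\la_-=-O(1)|k|^2$ split each entry into two pieces involving $e^{-O(1)t}$ and $e^{-O(1)|k|^2t}$ respectively. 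The powers of $|k|$ in each matrix slot are then read off from the relevant combinations of $\la_\pm/(\la_+-\la_-)$, $\la_+\la_-/(\la_+-\la_-)$ and $1/(\la_+-\la_-)$, all controlled through $|\la_+-\la_-|\sim|k|^2$, yielding the two matrix bounds stated for $\D_\infty$.

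The main obstacle is $\D_1$, where $\psi$ may vanish at finitely many points, so that $\la_+=\la_-$ becomes a double real root and the difference quotients in the $\CN_{ij}$ formulas carry removable singularities that must be handled uniformly across $\D_1^+$ and $\D_1^-$. To cover both subdomains at once I will replace every such quotient by an integral along the segment from $\la_-$ to $\la_+$, e.g.
\begin{equation*}
\frac{\la_+e^{\la_-t}-\la_-e^{\la_+t}}{\la_+-\la_-}=-\la_+\la_-\int_0^1\frac{(t\la-1)e^{\la t}}{\la^2}\bigg|_{\la=(1-s)\la_-+s\la_+}ds,
\end{equation*}
and the analogous representations for $\CN_{22}$, $\CN_{12}$ and $\CN_{23}$. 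On $\D_1^+$ the roots $\la_\pm$ are real and strictly negative, so the entire segment $\{(1-s)\la_-+s\la_+:s\in[0,1]\}$ lies in the left half-plane uniformly; on $\D_1^-$ the segment collapses onto the common real part $-\tfrac12\mu|k|^2\le-\tfrac12\mu\eps^2$. In either case $\Re\{(1-s)\la_-+s\la_+\}\le-c$ uniformly in $k\in\D_1$ and $s\in[0,1]$, while $|(1-s)\la_-+s\la_+|$ remains bounded below because $\la_\pm$ are bounded away from zero on the compact region $\D_1$. Hence the integrands are pointwise controlled by $(1+t)e^{-ct}\lesssim e^{-O(1)t}$, yielding the uniform exponential bound on every entry on $\D_1$. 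Collecting the three regional estimates into matrix form gives Proposition \ref{prop.n.bdd}.
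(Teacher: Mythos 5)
Your proposal is correct and follows essentially the same route as the paper: the same three-region decomposition, the same algebraic rewriting via $\la_+\la_-/(\la_+-\la_-)$, the same asymptotics of $\la_\pm$ on $\D_0$ and $\D_\infty$, and in particular the same integral representation along the segment from $\la_-$ to $\la_+$ to handle the removable double-root singularity on $\D_1$, split into the real and complex-conjugate subcases $\D_1^{\pm}$.
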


\subsection{Pointwise bound on the electromagnetic part}

In order to estimate the pointwise bound on $\CM$, let's rewrite its elements as
\begin{eqnarray*}
  \CM_{11} &=&\frac{\la_1}{(\la_1-\la_2)(\la_1-\la_3)(\la_2+\la_3)}e^{\la_1 \tau}\\
&&-\frac{1}
{(\la_1+\la_2)(\la_1+\la_3)(\la_2-\la_3)}
\left[\frac{\la_2(\la_1+\la_2)}{\la_1-\la_2}e^{\la_2\tau}-\frac{\la_3(\la_1+\la_3)}{\la_1-\la_3}
e^{\la_3\tau}\right],\\
  \CM_{22} &=&-\frac{\la_1(\la_2+\la_3)}{(\la_1-\la_2)(\la_1-\la_3)} e^{\la_1\tau}\\
&&+\frac{(\la_1+\la_2)(\la_1+\la_3)}{\la_2-\la_3}\left[
\frac{\la_2}{\la_1^2-\la_2^2}e^{\la_2\tau}-\frac{\la_3}{\la_1^2-\la_3^2}e^{\la_3\tau}
\right],\\
  \CM_{33} &=&\frac{r(\la_2+\la_3)}{\la_1(\la_1-\la_2)(\la_1-\la_3)} e^{\la_1\tau}\\
&&-\frac{r(\la_1+\la_2)(\la_1+\la_3)}{\la_2-\la_3}\left[
\frac{1}{\la_2(\la_1^2-\la_2^2)}e^{\la_2\tau}-\frac{1}{\la_3(\la_1^2-\la_3^2)}e^{\la_3\tau}
\right],
\end{eqnarray*}
and
\begin{eqnarray*}
  \CM_{12} &=&-\frac{\la_1}{(\la_1-\la_2)(\la_1-\la_3)} e^{\la_1\tau}\\
&&+\frac{1}{\la_2-\la_3}\left[
\frac{\la_2}{\la_1-\la_2}e^{\la_2\tau}-\frac{\la_3}{\la_1-\la_3}e^{\la_3\tau}
\right],\\
  \CM_{13} &=& \left\{-\frac{1}{(\la_1-\la_2)(\la_1-\la_3)} e^{\la_1\tau}\right.\\
&&\left.+\frac{1}{\la_2-\la_3}\left[
\frac{1}{\la_1-\la_2}e^{\la_2\tau}-\frac{1}{\la_1-\la_3}e^{\la_3\tau}
\right]\right\} i\xi\times,\\
  \CM_{23} &=& \left\{-\frac{\la_2+\la_3}{(\la_1-\la_2)(\la_1-\la_3)} e^{\la_1\tau}\right.\\
&&\left.+\frac{(\la_1+\la_2)(\la_1+\la_3)}{\la_2-\la_3}\left[
\frac{1}{\la_1^2-\la_2^2}e^{\la_2\tau}-\frac{1}{\la_1^2-\la_3^2}e^{\la_3\tau}
\right]\right\} i\xi\times,
\end{eqnarray*}
and
\begin{equation*}
    \CM_{21}=-\CM_{12}, \ \CM_{31}=\CM_{13}, \ \CM_{32}=-\CM_{23}.
\end{equation*}
Similarly as for considering $\CN$, let us divide the whole space $\R^3$ by three subdomains as
\begin{eqnarray*}
&\dis   \R^3=\D_0\cup\D_1\cup\D_\infty,\\
&\dis   \D_0 =  \{r\leq \eps\},\ \
  \D_1 =\{\eps\leq r\leq L\},\ \
  \D_\infty = \{r\geq L\}.
\end{eqnarray*}
Here, we still used $\D_0,\D_1,\D_\infty$ for simplicity of notations without any confusion.

\medskip
\noindent{\it Case when $r$ is near $0$, i.e. $r\in \D_0$}. In this case, $\la_i$, $i=1,2,3$, have the following asymptotic behavior
\begin{eqnarray*}
  \la_1&=&\si(r) = -ar^2 (1-r+O(1)r^2),\\
\la_2 &=&\chi_+ =-\frac{ar-ar^2+O(1)r^3}{2}+ i \sqrt{1+r-O(1)r^2},\\
\la_3 &=&\chi_-=\bar{\chi}_+,
\end{eqnarray*}
as $r$ tends to 0. This also implies
\begin{eqnarray*}
 \la_1-\la_{2,3}&=&\frac{1}{2}ar-O(1)r^2\mp i \sqrt{1+r-O(1)r^2},\ |\la_1-\la_{2,3}|=O(1),\\
 \la_1+\la_{2,3}&=&-\frac{1}{2}ar-O(1)r^2\pm i \sqrt{1+r-O(1)r^2}, \ |\la_1+\la_{2,3}|=O(1),\\
\la_2+\la_3 &=& -(ar-ar^2+O(1)r^3),\ \ |\la_2+\la_3|=O(1)r,\\
\la_2-\la_3 &=&2i \sqrt{1+r-O(1)r^2},\ \ |\la_2-\la_3|=O(1)r^{1/2}.
\end{eqnarray*}
By using the above computations, it is straightforward to make estimates on $\CM$ as   $r$ tends to 0:
\begin{eqnarray*}
  |\CM_{11}| &\lesssim & r e^{- O(1) r^2 \tau} +e^{- O(1) r \tau},\\
  |\CM_{22}| &\lesssim & r^3 e^{-O(1)r^2\tau} +  e^{-O(1)r\tau},\\
 |\CM_{33}| &\lesssim &  e^{-O(1)r^2\tau} + r e^{-O(1)r\tau},
\end{eqnarray*}
and
\begin{eqnarray*}
  |\CM_{12}| &\lesssim & r^2 e^{- O(1) r^2 \tau} +e^{- O(1) r \tau},\\
  |\CM_{13}| &\lesssim & r^{1/2} e^{-O(1)r^2\tau} +  r^{1/2}e^{-O(1)r\tau},\\
 |\CM_{23}| &\lesssim & r^{3/2} e^{-O(1)r^2\tau} + r^{1/2} e^{-O(1)r\tau}.
\end{eqnarray*}

\medskip
\noindent{\it Case when $r$ is near $\infty$, i.e. $r\in \D_\infty$}. In this case,
one has
\begin{eqnarray*}
  \la_1 &=& \si=-a r +\frac{1}{ar} (1-\frac{1}{a^2 r}+\frac{O(1)}{r^2}),\\
\la_2 &=&\chi_+=-\frac{1}{2ar} (1-\frac{1}{a^2 r}+\frac{O(1)}{r^2}) + i \sqrt{r+O(1)r^{-1}},\\
\la_3 &=& \chi_-=\bar{\chi}_+,
\end{eqnarray*}
as $r$ tends to $\infty$.
This further implies
\begin{eqnarray*}
  \la_1 - \la_2 &=& -ar +\frac{O(1)}{r} -  i \sqrt{r+O(1)r^{-1}},\ \ |\la_1-\la_2|= O(1)r\\
 \la_1 + \la_2 &=& -ar +\frac{O(1)}{r} +  i \sqrt{r+O(1)r^{-1}},\ \ |\la_1+\la_2| = O(1)r,\\
\la_2-\la_3 &=& 2 i \sqrt{r+O(1)r^{-1}},\ \ |\la_2-\la_3|=O(1)r^{1/2},\\
\la_2+\la_3 &=&-\frac{1}{ar} (1-\frac{1}{a^2 r}+\frac{O(1)}{r^2}),\ \ |\la_2+\la_3 |= O(1)r^{-1}.
\end{eqnarray*}
Then, as in the case when $r$ is near 0, it is also straightforward to obtain the estimates on $\CM$
\begin{eqnarray*}
|\CM_{11}| &\lesssim& e^{-O(1) r\tau} +\frac{1}{r^2}e^{-\frac{O(1)}{r}\tau},\\
|\CM_{22}| &\lesssim& \frac{1}{r^2}e^{-O(1)r\tau}+  e^{-\frac{O(1)}{r}\tau},\\
|\CM_{33}| &\lesssim & \frac{1}{r^3}e^{-O(1)r\tau}+  e^{-\frac{O(1)}{r}\tau},
\end{eqnarray*}
and
\begin{eqnarray*}
|\CM_{12}| &\lesssim& \frac{1}{r}e^{-O(1) r\tau} +\frac{1}{r}e^{-\frac{O(1)}{r}\tau},\\
|\CM_{13}| &\lesssim& \frac{1}{r^{3/2}}e^{-O(1)r\tau}+ \frac{1}{r} e^{-\frac{O(1)}{r}\tau},\\
|\CM_{23}| &\lesssim& \frac{1}{r^{5/2}}e^{-O(1)r\tau}+  e^{-\frac{O(1)}{r}\tau},
\end{eqnarray*}
as $r$ tends to $\infty$.

\medskip
\noindent{\it Case when $r>0$ is far from $0$ and $\infty$, i.e. $r\in \D_1$}. In this case, we shall consider the estimates on $\CM$ by dividing $\D_1=\{\eps\leq r\leq L\}$ into three parts. For that, denote $A_{\eps,L}$ as a set of all zeros  of $R$ over $[\eps,L]$, that is,
\begin{equation*}
   A_{\eps,L}=\{r_i,\ 1\leq i\leq i_0\}:=\{\eps\leq r\leq L, R=0\}.
\end{equation*}
Notice that from Property 1 in Section \ref{sec4.1},  $g(z)=0$ can not have one real root with three multiples.
Therefore, for any $r_i\in A_{\eps,L}$, by the representation of roots in Section \ref{sec4.2}, $g(z)=0$ at $r=r_i$ has one 1-multiple real root and the other different 2-multiple real root. Set $\si$ to be the  1-multiple real root, and $\chi_+=\chi_-$ the 2-multiple real roots, of $g(z)=0$ at $r_i$. Then,
it follows
\begin{equation*}
    g'(\si)|_{r=r_i}>0,\ \ \varphi|_{r=r_i}=0,
\end{equation*}
where $\varphi$ depending on $\si$ and $r$ is given by
\begin{equation*}
    \varphi=3\si^2+2ar \si -a^2r^2 +4(r+1).
\end{equation*}
Define
\begin{equation*}
    \de_0=\frac{1}{2}\min_{1\leq i\leq i_0} g'(\si)|_{r=r_i}>0.
\end{equation*}
By continuity, for any $\eta>0$ small enough, there are  $\eps_i$-neighborhoods $B(r_i,\eps_i)\subseteq [\eps,L]$ which do not intersect, such that
\begin{equation*}
    \min_{1\leq i\leq i_0}\inf_{B(r_i,\eps_i)}g'(\si)\geq \de_0,\ \ \max_{1\leq i\leq i_0}\sup_{B(r_i,\eps_i)}|\varphi|\leq \eta.
\end{equation*}
We now divide two cases to decide how to choose roots $\la_1,\la_2,\la_3$ and further make some essential estimates.

\medskip
\noindent{\it Case 1.} For each $1\leq i\leq i_0$, over $B(r_i,\eps_i)$, we choose $\si$ depending on $r_i$ as before, and set
\begin{eqnarray*}
  \la_1 &=& \si,\\
 \la_{2,3} &=&\chi_\pm= -\frac{\si+ar}{2}\pm \frac{1}{2}\sqrt{-\varphi}.
\end{eqnarray*}
One can check
\begin{equation*}
    \la_1-\la_{2,3}=\frac{3\si +ar}{2}\mp \frac{1}{2}\sqrt{-\varphi},
\end{equation*}
which implies
\begin{equation*}
    (\la_1-\la_2)(\la_1-\la_3)=\frac{1}{4} [(3\si+ar)^2-(-\varphi)]=g'(\si)\geq \de_0>0.
\end{equation*}
Moreover, one can choose $\eta>0$ small enough such that
\begin{equation*}
    \sqrt{\de_0-\frac{\eta}{4}}-\sqrt{\frac{\eta}{4}}>0.
\end{equation*}
Then, it holds that
\begin{equation*}
    \frac{|3\si +ar|}{2}\geq \sqrt{\de_0-\frac{|\varphi|}{4}}\geq \sqrt{\de_0-\frac{\eta}{4}}>\sqrt{\frac{\eta}{4}}>0,
\end{equation*}
and
\begin{equation*}
  \frac{|3\si +ar|}{2}-\sqrt{\frac{|\varphi|}{4}}\geq \sqrt{\de_0-\frac{\eta}{4}}-  \sqrt{\frac{\eta}{4}}>0.
\end{equation*}

\medskip
\noindent{\it Case 2.} Over $[\eps,L]\backslash \cup_{i=1}^{i_0} B(r_i,\eps_i)$, we choose $\si$ as the minimum (or maximum) real root of $g(z)=0$. Then, by this choice, one has
\begin{equation*}
    g'(\si)>0,\ \ |\varphi|>0,\ \ \forall\, r\in [\eps,R]\backslash \cup_{i=1}^{i_0} B(r_i,\eps_i).
\end{equation*}
This implies
\begin{equation*}
    (\la_1-\la_2)(\la_1-\la_3)=\frac{1}{4} [(3\si+ar)^2-(-\varphi)]=g'(\si)\geq \min_{[\eps,R]\backslash \cup_{i=1}^{i_0} B(r_i,\eps_i)}g'(\si)>0,
\end{equation*}
and
\begin{equation*}
    \min_{[\eps,R]\backslash \cup_{i=1}^{i_0} B(r_i,\eps_i)}|\varphi|>0.
\end{equation*}

We now decompose
\begin{eqnarray*}
  \D_1 &=& \D_{1,-}\cup \D_{1,0}\cup\D_{1,+}, \\
\D_{1,-} &=& \{\eps\leq r\leq L, r\notin \cup_{i=1}^{i_0} B(r_i,\eps_i), R\leq 0\},\\
\D_{1,+} &=& \{\eps\leq r\leq L, r\notin\cup_{i=1}^{i_0} B(r_i,\eps_i), R\geq 0\},\\
\D_{1,0} &=& \{\eps\leq r\leq L, r\in \cup_{i=1}^{i_0} B(r_i,\eps_i)\}.
\end{eqnarray*}
Notice that all domains $\D_{1,\pm}$ and $\D_{1,0}$ are bounded and closed, and
\begin{equation*}
    R|_{\D_{1,+}}>0,\ \ R|_{\D_{1,-}}<0.
\end{equation*}
In what follows we shall estimate $\CM_{11}$ only; other elements of $\CM$ can be estimated in the completely same way.

\medskip
\noindent{\it Over $\D_{1,0}$:} Rewrite $\CM_{11}$ as
\begin{multline*}
\CM_{11} =\frac{\la_1}{(\la_1-\la_2)(\la_1-\la_3)(\la_2+\la_3)}e^{\la_1 \tau}\FI_3\\
-\frac{1}
{(\la_1+\la_2)(\la_1+\la_3)}\int_0^1 \frac{d}{d\la}
\left[\frac{\la(\la_1+\la)}{\la_1-\la}
e^{\la\tau}\right]_{\la=(1-\theta)\la_2+\theta\la_3} d\theta\,\FI_3,
\end{multline*}
where
\begin{equation*}
  \frac{d}{d\la}
\left[\frac{\la(\la_1+\la)}{\la_1-\la}
e^{\la\tau}\right]=\frac{\tau \la (\la_1+\la)+\la_1+(\la_1+2)\la+\la^2}{(\la_1-\la)^2} e^{\la \tau}.
\end{equation*}
Then, one has
\begin{eqnarray}\label{est.m11}
&&|\CM_{11}| \leq \frac{|\la_1|}{|(\la_1-\la_2)(\la_1-\la_3)|\cdot |(\la_2+\la_3)|}e^{\tau \Re \la_1}\\
&&\hspace{1.5cm}+\frac{1}
{|(\la_1+\la_2)(\la_1+\la_3)|}\nonumber\\
&&\int_0^1
\left[\frac{\tau |\la (\la_1+\la)|+|\la_1+(\la_1+2)\la+\la^2|}{|\la_1-\la|^2} e^{\tau \Re \la }\right]_{\la=(1-\theta)\la_2+\theta\la_3}
d\theta.\nonumber
\end{eqnarray}
Notice
\begin{eqnarray*}
    &&\la_2+\la_3=-(\si+ar)<0,\\
&&\Re \la_1=\si<0,\\
&&\Re \{(1-\theta)\la_2+\theta\la_3\}<0.
\end{eqnarray*}
Also, notice
\begin{equation*}
    \la_1+\la_{2,3}=\frac{\si-ar}{2}\pm \frac{1}{2}\sqrt{-\varphi},
\end{equation*}
which implies
\begin{eqnarray*}
(\la_1+\la_2)(\la_1+\la_3)&=&\frac{(\si-ar)^2+\varphi}{4}\\
&=&\frac{(\si-ar)^2+3\si^2+2ar \si -a^2r^2 +4(r+1)}{4}\\
&=&\si^2+r+1>0.
\end{eqnarray*}
Consider the low bound of $|\la_1-\la|$ at $\la=(1-\theta)\la_2+\theta\la_3$ for all $0\leq \theta\leq 1$.

\medskip
\noindent{\it Case when $\la_{2,3}$ are real:} In this case,
\begin{equation*}
    \la_1-[(1-\theta)\la_2+\theta\la_3]=\frac{3\si+ar}{2}-(2\theta-1)\frac{\sqrt{-\varphi}}{2}
\end{equation*}
By the definition of $\D_{1,0}$,
\begin{equation*}
   |\la_1-[(1-\theta)\la_2+\theta\la_3]|\geq \frac{|3\si+ar|}{2}- \sqrt{\frac{|\varphi|}{4}}\geq \sqrt{\de_0-\frac{\eta}{4}}-  \sqrt{\frac{\eta}{4}}>0.
\end{equation*}

\medskip
\noindent{\it Case when $\la_{2,3}$ are non-real and thus complex conjugate:} In this case,
\begin{equation*}
   \la_1-[(1-\theta)\la_2+\theta\la_3]=\la_1-\Re \la_2 +(2\theta-1) i \Im \la_2,
\end{equation*}
which also implies
\begin{equation*}
    |\la_1-[(1-\theta)\la_2+\theta\la_3]|\geq |\la_1-\Re \la_2|= \frac{|3\si +ar|}{2}\geq \sqrt{\frac{\eta}{4}}>0.
\end{equation*}

\medskip
\noindent Therefore, whether $\la_{2,3}$ are real or not,
\begin{equation*}
    \min_{r\in \D_{1,0}, 0\leq \theta\leq 1} |\la_1-[(1-\theta)\la_2+\theta\la_3]|>0.
\end{equation*}
Then, putting all estimates above into \eqref{est.m11} gives
\begin{equation*}
    |\CM_{11}|\lesssim (1+\tau) e^{-O(1)\tau} \lesssim e^{-O(1)\tau}, \ \ r\in \D_{1,0}.
\end{equation*}

\medskip
\noindent {\it Over $\D_{1,+}$:} By noticing
\begin{equation*}
    \la_2+\la_3=2\Re \la_2=-(\si+ar)<0, \la_2-\la_3= 2 i \Im \la_2=i\sqrt{\varphi},\ \varphi>0,
\end{equation*}
it is straightforward to make the estimate
\begin{eqnarray*}
  |\CM_{11}| &\leq &\frac{|\la_1|}{|(\la_1-\la_2)(\la_1-\la_3)|\cdot |\si+ar|}e^{\tau \Re\la_1 }\\
&&+\frac{2}{|(\la_1+\la_2)(\la_1+\la_3)|\cdot \sqrt{\varphi}}
\frac{|\la_2(\la_1+\la_2)|}{|\la_1-\la_2|}e^{\tau \Re\la_2}\\\
&\leq &\frac{\max|\la_1|}{\min|(\la_1-\la_2)(\la_1-\la_3)|\cdot \min|\si+ar|}e^{\tau \min\si }\\
&&+\frac{2}{\min|(\la_1+\la_2)(\la_1+\la_3)|\cdot \sqrt{\min\varphi}}\\
&&\ \ \ \frac{\max|\la_2(\la_1+\la_2)(\la_1-\la_3)|}{\min|(\la_1-\la_2)(\la_1-\la_3)|}e^{\tau \min \frac{-(\si+ar)}{2}}\\
&\lesssim & e^{-O(1)\tau},
\end{eqnarray*}
where $\max,\min$ are taken over $\D_{1,+}$.

\medskip
\noindent {\it Over $\D_{1,-}$:} Since
\begin{equation*}
    \la_2+\la_3=2\Re \la_2=-(\si+ar)<0, \la_2-\la_3= \sqrt{-\varphi},\ \varphi<0,
\end{equation*}
it follows in the same way that
\begin{equation*}
     |\CM_{11}|\lesssim e^{-O(1)\tau}.
\end{equation*}

\medskip
Thus, for $\CM_{11}$, one has
\begin{equation*}
     |\CM_{11}|\lesssim e^{-O(1)\tau},\ \ r\in \D_1=\D_{1,0}\cup\CD_{1,+}\cup\CD_{1,-}.
\end{equation*}
The above estimate also holds true for all other elements of $\CM$ and details are omitted for simplicity.

Then, in a summary, we have proved

\begin{proposition}\label{prop.m.bdd}
Let the matrix $\CM=\CM(t,k)_{9\times 9}$ be defined in Proposition \ref{prop.m}. The elements of $\CM$ have the following upper bounds for pointwise $t\geq 0$ and $k\in \R^3$.
Over $\D_0$,
\begin{eqnarray*}
  |\CM| &\lesssim & \left(\begin{array}{ccc}
                      r & r^2 & r^{1/2} \\
                      r^2 & r^3 & r^{3/2} \\
                      r^{1/2} & r^{3/2} & 1
                    \end{array}\right)
e^{-O(1)r^2\tau}
+\left(\begin{array}{ccc}
                      1 & 1 & r^{1/2} \\
                      1 & 1 & r^{1/2} \\
                      r^{1/2} & r^{1/2} & r
                    \end{array}\right)
e^{-O(1)r\tau}.
\end{eqnarray*}
Over $\D_\infty$,
\begin{eqnarray*}
  |\CM| &\lesssim& \left(\begin{array}{ccc}
                      1 & r^{-1} & r^{-3/2} \\
                      r^{-1} & r^{-2} & r^{-5/2} \\
                      r^{-3/2} & r^{-5/2} & r^{-3}
                    \end{array}\right)
e^{-O(1)r\tau}
+\left(\begin{array}{ccc}
                      r^{-2} & r^{-1} & r^{-1} \\
                      r^{-1} & 1 & 1 \\
                      r^{-1} & 1 & 1
                    \end{array}\right)
e^{-\frac{O(1)\tau}{r}}.
\end{eqnarray*}
Over $\D_1$,
\begin{equation*}
     |\CM|\lesssim \left(\begin{array}{ccc}
                      1 &1 & 1 \\
                      1 & 1 & 1 \\
                     1 & 1 & 1
                    \end{array}\right)e^{-O(1)\tau}.
\end{equation*}
\end{proposition}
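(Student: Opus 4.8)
The plan is to read the bounds off directly from the explicit entries of $\CM$ displayed at the start of this subsection, which express each $\CM_{ij}$ as a rational function of the roots $\la_1,\la_2,\la_3$ of \eqref{ind} multiplied by $e^{\la_j\tau}$, and to feed into them the structural and asymptotic information on the roots collected in Section \ref{sec4}. First I would fix the three frequency regimes $\D_0=\{r\le\eps\}$, $\D_\infty=\{r\ge L\}$ and $\D_1=\{\eps\le r\le L\}$, choosing $\eps$ small and $L$ large so that $R>0$ on $\D_0\cup\D_\infty$ (possible by the discussion in Section \ref{sec4.2}); on these two outer regimes $g(z)=0$ has one real root $\la_1=\si$ and a non-real conjugate pair $\la_{2,3}=\chi_\pm$, for which the closed-form asymptotics of Section \ref{sec4} apply.

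On $\D_0$ and $\D_\infty$ the argument is a plug-and-estimate. I would substitute the expansions from Section \ref{sec4}: near $r=0$, $\si=-ar^2(1+O(r))$ and $\chi_\pm=-\tfrac{ar}{2}(1+O(r))\pm i\sqrt{1+r+O(r^2)}$; near $r=\infty$, $\si=-ar+\tfrac1{ar}(1+O(r^{-1}))$ and $\chi_\pm=-\tfrac1{2ar}(1+O(r^{-1}))\pm i\sqrt{r+O(r^{-1})}$. The combinations $\la_1-\la_{2,3}$, $\la_1+\la_{2,3}$, $\la_2+\la_3$, $\la_2-\la_3$ then have the explicit sizes recorded in Section \ref{sec4}, which bound the rational prefactors in each $\CM_{ij}$ by the asserted powers of $r$; the time decay comes from $e^{\tau\Re\la_j}$, where $\Re\la_1\sim-ar^2$, $\Re\chi_\pm\sim-ar/2$ near $0$, and $\Re\la_1\sim-ar$, $\Re\chi_\pm\sim-1/(2ar)$ near $\infty$. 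Matching term by term yields exactly the two matrix bounds claimed over $\D_0$ and $\D_\infty$.

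The main obstacle is the middle regime $\D_1$, where $R$ may vanish at finitely many points $r_i$, so that $\chi_+$ and $\chi_-$ coalesce into a double real root and several denominators in the formulas for $\CM$ are singular. Here I would use the decomposition $\D_1=\D_{1,0}\cup\D_{1,+}\cup\D_{1,-}$ fixed before the statement, with $\D_{1,0}$ the union of small neighbourhoods $B(r_i,\eps_i)$ of the zeros of $R$. On $\D_{1,\pm}$ all three roots are bounded, mutually separated, and have $\Re\la_j<0$ uniformly by compactness together with Properties 1--4, so each entry of $\CM$ is a bounded rational function of the $\la_j$ times $e^{\tau\Re\la_j}$ and hence $\lesssim e^{-O(1)\tau}$. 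On $\D_{1,0}$ I would treat $\CM_{11}$ in detail (the other entries being handled identically) by rewriting the singular combination $\tfrac{f(\la_2)-f(\la_3)}{\la_2-\la_3}$ as $\int_0^1 f'((1-\theta)\la_2+\theta\la_3)\,d\theta$, which cancels the spurious factor $\la_2-\la_3$; the remaining denominators are then controlled by the two algebraic identities valid at the simple real root $\si$, namely $(\la_1-\la_2)(\la_1-\la_3)=g'(\si)\ge\de_0>0$ and $(\la_1+\la_2)(\la_1+\la_3)=\si^2+r+1>0$, and by the uniform lower bound for $|\la_1-((1-\theta)\la_2+\theta\la_3)|$ obtained by separating the subcases $\la_{2,3}$ real and $\la_{2,3}$ non-real. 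Since $\Re\la_1=\si<0$ and $\Re\{(1-\theta)\la_2+\theta\la_3\}<0$ uniformly on $\D_{1,0}$, this gives $|\CM_{11}|\lesssim(1+\tau)e^{-O(1)\tau}\lesssim e^{-O(1)\tau}$; collecting the three subdomains of $\D_1$ and the two outer regimes completes the proof.
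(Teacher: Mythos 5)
Your proposal follows essentially the same route as the paper's proof: read off the $\D_0$ and $\D_\infty$ bounds by plugging the Section 4 asymptotics of $\si$ and $\chi_\pm$ into the explicit rational-exponential formulas for $\CM_{ij}$, and handle $\D_1$ via the decomposition $\D_{1,0}\cup\D_{1,+}\cup\D_{1,-}$, using the integral representation $\frac{f(\la_2)-f(\la_3)}{\la_2-\la_3}=\int_0^1 f'((1-\theta)\la_2+\theta\la_3)\,d\theta$ on $\D_{1,0}$ together with the identities $(\la_1-\la_2)(\la_1-\la_3)=g'(\si)\geq\de_0$ and $(\la_1+\la_2)(\la_1+\la_3)=\si^2+r+1$, and the case split (real vs.\ non-real $\chi_\pm$) for the lower bound on $|\la_1-((1-\theta)\la_2+\theta\la_3)|$. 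This matches the paper's argument step for step, so nothing further is needed.
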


By combining Proposition \ref{prop.n.bdd} and Proposition \ref{prop.m.bdd} together with Theorem \ref{thm.gr}, we have the following

\begin{theorem}\label{thm.up}
Let $U=[n,u,E,B]$ satisfy the system \eqref{leq} for all $t>0$, $x\in \R^3$ with initial data $U|_{t=0}=U_0=[n_0,u_0,E_0,B_0]$.
Then, it holds that
\begin{equation*}
 \left(\begin{array}{c}
                      |\hat{n}| \\
                     |\hat{u}| \\
                     |\hat{E}|\\
                   |\hat{B}|
                    \end{array}\right)\lesssim \hat{G}^{\rm upp} \left(\begin{array}{c}
                      |\hat{n}_0| \\
                     |\hat{u}_0| \\
                     |\hat{E}_0|\\
                   |\hat{B}_0|
                    \end{array}\right)
\end{equation*}
for all $k\in \R^3$, where over $\D_0$,
\begin{equation*}
    \hat{G}^{\rm upp}= \left(\begin{array}{cccc}
                      1 &|k| & 0  & 0   \\
                    |k| & 1  & 1  & |k| \\
                     0  & 1  & 1  & |k| \\
                     0  &|k| &|k| &|k|^2
                    \end{array}\right)e^{-O(1)|k|^2 t}
+ \left(\begin{array}{cccc}
                      0 &0    &0   &0      \\
                      0 &|k|^2&|k|^4&|k|   \\
                     0  &|k|^4&|k|^6&|k|^3 \\
                     0  &|k|  &|k|^3&1
                    \end{array}\right)e^{-O(1)|k|^4 t};
\end{equation*}
over $\D_\infty$
\begin{multline*}
\dis  \hat{G}^{\rm upp}= \left(\begin{array}{cccc}
                      1       &|k|^{-1}&0       &0      \\
                      |k|^{-1}&|k|^{-2}&|k|^{-2}&0      \\
                      0       &|k|^{-2}&1       &0      \\
                      0       &0       &0       &0
                    \end{array}\right)e^{-O(1)t}\\
\dis
+\left(\begin{array}{cccc}
                      |k|^{-2}&|k|^{-1}&0       &0       \\
                      |k|^{-1}&1       &|k|^{-2}&|k|^{-3}\\
                      0       &|k|^{-2}&|k|^{-2}&|k|^{-5}\\
                      0       &|k|^{-3}&|k|^{-5}&|k|^{-6}
                    \end{array}\right)e^{-O(1)|k|^2t}
                    +\left(\begin{array}{cccc}
                      0       &0       &0       &0       \\
                      0       &|k|^{-4}&|k|^{-2}&|k|^{-2}\\
                      0       &|k|^{-2}&1       &1       \\
                      0       &|k|^{-2}&1       &1
                    \end{array}\right)e^{-\frac{O(1) t}{|k|^2}};
\end{multline*}
over $\D_1$,
\begin{equation*}
     \hat{G}^{\rm upp}= \left(\begin{array}{cccc}
                      1       &1       &0       &0      \\
                      1       &1       &1       &1      \\
                      0       &1       &1       &1      \\
                      0       &1       &1       &1
                    \end{array}\right)e^{-O(1)t}.
\end{equation*}
\end{theorem}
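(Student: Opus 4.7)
The plan is to combine Theorem \ref{thm.gr} with the pointwise matrix bounds on $\CN$ and $\CM$ established in Proposition \ref{prop.n.bdd} and Proposition \ref{prop.m.bdd}. First, I would invoke the decomposition $\hat{U} = \hat{U}^{(1)} + \hat{U}^{(2)}$ of Theorem \ref{thm.gr}, noting that the divergence-free constraint $k \cdot \hat{B} = 0$ forces $\hat{B} = \hat{B}_\perp$, that $\hat{n}$ is entirely a fluid variable, and that $\hat{u}$ and $\hat{E}$ split as $\hat{u} = \hat{u}_\parallel + \hat{u}_\perp$ and $\hat{E} = \hat{E}_\parallel + \hat{E}_\perp$. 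Since the orthogonal projections onto $\tilde{k}\tilde{k}^T$ and $\FI_3 - \tilde{k}\tilde{k}^T$ are contractions, each of $|\hat{u}_{0,\parallel}|$, $|\hat{u}_{0,\perp}|$ is controlled by $|\hat{u}_0|$ (analogously for $\hat{E}_0$), so any estimate on the decomposed initial data will automatically yield the matrix bound acting on the full initial data.

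Next, I would apply Proposition \ref{prop.n.bdd} to estimate $\hat{U}^{(1)} = \CN(t,k)[\hat{n}_0, \hat{u}_{0,\parallel}, \hat{E}_{0,\parallel}]^T$ in each of the three regimes $\D_0$, $\D_1$, $\D_\infty$, and in parallel apply Proposition \ref{prop.m.bdd} to $\hat{U}^{(2)} = \CM(\beta t, k/\beta)[\hat{u}_{0,\perp}, \hat{E}_{0,\perp}, \hat{B}_{0,\perp}]^T$. The change of variables $r = |\xi|^2 = |k|^2/\beta^2$ and $\tau = \beta t$ converts the exponential factors $e^{-O(1)r^2\tau}$, $e^{-O(1)r\tau}$, $e^{-O(1)\tau/r}$ into $e^{-O(1)|k|^4 t}$, $e^{-O(1)|k|^2 t}$, $e^{-O(1)t/|k|^2}$ respectively (with $\beta$ absorbed into the generic constants), while each polynomial factor $r^\alpha$ becomes $|k|^{2\alpha}$ up to a power of $\beta$. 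Because the partitions $\D_0, \D_1, \D_\infty$ used in the two propositions differ only by the choice of the cutoffs $\epsilon, L$, I would fix a single common pair $0 < \epsilon \ll 1 \ll L < \infty$ that simultaneously validates both sets of estimates.

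With the rescaled bounds in hand, the final step is an entry-by-entry tabulation: for each of the sixteen positions of the claimed $\hat{G}^{\rm upp}$, I would collect the fluid contribution from $\CN$ and the electromagnetic contribution from the rescaled $\CM$ and take their sum. For instance, in $\D_0$, the $(u,E)$-entry receives $1\cdot e^{-O(1)|k|^2 t}$ from $\CN_{23}$ and $|k|^4 e^{-O(1)|k|^4 t}$ from $\CM_{12}$, matching the stated form; the rows and columns associated with $\hat{n}$ or $\hat{B}$ receive a contribution from only one of the two blocks, which accounts for the zero entries. A final application of the triangle inequality $|\hat{U}| \leq |\hat{U}^{(1)}| + |\hat{U}^{(2)}|$ then delivers the claimed bound on $|\hat{U}|$ in terms of $|\hat{U}_0|$.

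The hard part is almost entirely technical bookkeeping: tracking the powers of $|k|$ after rescaling, correctly interpreting the curl-structure factors $i\xi\times$ in $\CM_{13}, \CM_{23}$ as contributing an extra $|\xi| \sim |k|$ to the magnitude bound, collecting the three distinct decay rates in $\D_\infty$ (notably the slow regularity-loss rate $e^{-O(1)t/|k|^2}$ coming from the eigenvalues $\chi_\pm$ behaving like $O(|k|^{-1}) + O(|k|^{1/2})i$ at infinity), and verifying that over each regime the weaker polynomial-times-exponential terms can be absorbed into, or recorded alongside, the dominant ones so that the final matrix has exactly the form stated in the theorem.
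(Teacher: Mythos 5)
Your proposal is correct and follows the same route as the paper: the theorem is stated there as an immediate corollary of combining Theorem~\ref{thm.gr} with Propositions~\ref{prop.n.bdd} and~\ref{prop.m.bdd}, with the rescaling $r=|\xi|^2=|k|^2/\beta^2$, $\tau=\beta t$ absorbed into the generic $O(1)$ constants and the $i\xi\times$ factors contributing the extra $r^{1/2}\sim|k|$ exactly as you describe. Your bookkeeping remarks (contractivity of the parallel/perpendicular projections, $\hat B=\hat B_\perp$ by the divergence constraint, choosing a single common cutoff pair $\epsilon,L$ for both propositions, and summing the $\CN$- and $\CM$-contributions entry by entry) are the right way to make the one-line argument airtight.
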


\subsection{Refined time-decay property} From Theorem \ref{thm.up}, for any integer $j\geq 0$, one has the following estimates for $t\geq 0$, $k\in \R^3$. For $n$,
\begin{eqnarray*}
  |\CF (\na^j n)| &\lesssim& (|k|^j|\hat{n}_0|+|k|^{j+1}|\hat{u}_0|)e^{-c|k|^2 t}1_{\D_0}\\
&&+|k|^j(|\hat{n}_0|+|\hat{u}_0|)e^{-ct}1_{\D_1} \\
&&+(|k|^j|\hat{n}_0|+|k|^{j-1}|\hat{u}_0|)e^{-c t}1_{\D_\infty}.
\end{eqnarray*}
Here, for later use, we use $\hat{n}_0=-\frac{\ga}{\be}ik\cdot \hat{E}_0$ to replace $\hat{n}_0$ in the first term on the right-hand side so that
\begin{eqnarray*}
  |\CF (\na^j n)| &\lesssim& (|k|^{j+1}|\hat{E}_0|+|k|^{j+1}|\hat{u}_0|)e^{-c|k|^2 t}1_{\D_0}\\
&&+|k|^j(|\hat{n}_0|+|\hat{u}_0|)e^{-ct}1_{\D_1} \\
&&+(|k|^j|\hat{n}_0|+|k|^{j-1}|\hat{u}_0|)e^{-c t}1_{\D_\infty}.
\end{eqnarray*}
For $u$,
\begin{eqnarray*}
  |\CF (\na^j u)| &\lesssim& (|k|^{j+1}|\hat{n}_0|+|k|^{j}|\hat{u}_0|+|k|^j|\hat{E}_0|+|k|^{j+1}|\hat{B}_0|)e^{-c|k|^2 t}1_{\D_0}\\
&&+(|k|^{j+2}|\hat{u}_0|+|k|^{j+4}|\hat{E}_0|+|k|^{j+1}|\hat{B}_0|)e^{-c|k|^4 t}1_{\D_0}\\
&&+(|k|^{j-1}|\hat{n}_0|+|k|^{j}|\hat{u}_0|+|k|^{j-2}|\hat{E}_0|+|k|^{j-3}|\hat{B}_0|)e^{-ct}1_{\D_1\cup \D_\infty}\\
&&+(|k|^{j-4}|\hat{u}_0|+|k|^{j-2}|\hat{E}_0|+|k|^{j-2}|\hat{B}_0|)e^{-\frac{ct}{|k|^2}}1_{\D_\infty}.
\end{eqnarray*}
For $E$ and $B$,
\begin{eqnarray*}
  |\CF (\na^j E)| &\lesssim& (|k|^{j}|\hat{u}_0|+|k|^j|\hat{E}_0|+|k|^{j+1}|\hat{B}_0|)e^{-c|k|^2 t}1_{\D_0}\\
&&+(|k|^{j+4}|\hat{u}_0|+|k|^{j+6}|\hat{E}_0|+|k|^{j+3}|\hat{B}_0|)e^{-c|k|^4 t}1_{\D_0}\\
&&+(|k|^{j-2}|\hat{u}_0|+|k|^{j}|\hat{E}_0|+|k|^{j-5}|\hat{B}_0|)e^{-ct}1_{\D_1\cup \D_\infty}\\
&&+(|k|^{j-2}|\hat{u}_0|+|k|^{j}|\hat{E}_0|+|k|^{j}|\hat{B}_0|)e^{-\frac{ct}{|k|^2}}1_{\D_\infty},
\end{eqnarray*}
and
\begin{eqnarray*}
  |\CF (\na^j B)| &\lesssim& (|k|^{j+1}|\hat{u}_0|+|k|^{j+1}|\hat{E}_0|+|k|^{j+2}|\hat{B}_0|)e^{-c|k|^2 t}1_{\D_0}\\
&&+(|k|^{j+1}|\hat{u}_0|+|k|^{j+3}|\hat{E}_0|+|k|^{j}|\hat{B}_0|)e^{-c|k|^4 t}1_{\D_0}\\
&&+(|k|^{j-3}|\hat{u}_0|+|k|^{j-5}|\hat{E}_0|+|k|^{j-6}|\hat{B}_0|)e^{-ct}1_{\D_1\cup \D_\infty}\\
&&+(|k|^{j-2}|\hat{u}_0|+|k|^{j}|\hat{E}_0|+|k|^{j}|\hat{B}_0|)e^{-\frac{ct}{|k|^2}}1_{\D_\infty}.
\end{eqnarray*}

Now, for brevity of presentation, we introduce a notation related to the rate index over the low-frequency domain as follows. For integer $j\geq 0$ and $1\leq p\leq q\leq \infty$, define
\begin{equation*}
    \de(j,p,q)=\frac{j}{2}+\frac{3}{2}(\frac{1}{p}-\frac{1}{q})
\end{equation*}
which corresponds to the usual value in the case of the heat kernel in $\R^3$.
Then, by applying Lemma \ref{lem.glidecay} to those estimates above,  it is straightforward to obtain

\begin{theorem}\label{thm.tdp}
Let $U=[n,u,E,B]$ satisfy the system \eqref{leq} for all $t>0$, $x\in \R^3$ with initial data $U|_{t=0}=U_0=[n_0,u_0,E_0,B_0]$. Let $1\leq p,s\leq 2\leq q\leq \infty$ and let $\ell\geq 0$. Then, for any integer $j\geq 0$, the following time decay property of $U=[n,u,E,B]$  holds true:
\begin{eqnarray*}
  \|\na^j n\|_{L^q} &\lesssim& (1+t)^{-\de(j+1,p,q)}\|E_0\|_{L^p}+(1+t)^{-\de(j+1,p,q)}\|u_0\|_{L^p}\\
&&+ e^{-ct}\|\na^{m(j,s,q)}n_0\|_{L^s}+e^{-ct}\|\na^{m(j-1,s,q)}u_0\|_{L^s},
\end{eqnarray*}
\begin{eqnarray*}
  \|\na^j u\|_{L^q} &\lesssim& (1+t)^{-\de(j+1,p,q)}\|[n_0,B_0]\|_{L^p}+(1+t)^{-\de(j,p,q)}\|[u_0,E_0]\|_{L^p}\\
&&+(1+t)^{-\frac{1}{2}\de(j+2,p,q)}\|u_0\|_{L^p}+(1+t)^{-\frac{1}{2}\de(j+4,p,q)}\|E_0\|_{L^p}\\
&&+(1+t)^{-\frac{1}{2}\de(j+1,p,q)}\|B_0\|_{L^p}\\
&&+e^{-ct}(\|\na^{m(j-1,s,q)}n_0\|_{L^s}
+\|\na^{m(j,s,q)}u_0\|_{L^s}\\
&&+\|\na^{m(j-2,s,q)}E_0\|_{L^s}
+\|\na^{m(j-3,s,q)}B_0\|_{L^s})\\
&&+ (1+t)^{-\frac{\ell}{2}}(\|\na^{m(j-4+\ell,s,q)}u_0\|_{L^s}+\|\na^{m(j-2+\ell,s,q)}[E_0,B_0]\|_{L^s}),
\end{eqnarray*}
\begin{eqnarray*}
  \|\na^j E\|_{L^q} &\lesssim& (1+t)^{-\de(j,p,q)}\|[u_0,E_0]\|_{L^p}+(1+t)^{-\de(j+1,p,q)}\|B_0\|_{L^p}\\
&&+(1+t)^{-\frac{1}{2}\de(j+4,p,q)}\|u_0\|_{L^p}+(1+t)^{-\frac{1}{2}\de(j+6,p,q)}\|E_0\|_{L^p}\\
&&+(1+t)^{-\frac{1}{2}\de(j+3,p,q)}\|B_0\|_{L^p}\\
&&+e^{-ct}(\|\na^{m(j-2,s,q)}u_0\|_{L^s}+\|\na^{m(j,s,q)}E_0\|_{L^s}
+\|\na^{m(j-5,s,q)}B_0\|_{L^s})\\
&&+ (1+t)^{-\frac{\ell}{2}}(\|\na^{m(j-2+\ell,s,q)}u_0\|_{L^s}+\|\na^{m(j+\ell,s,q)}[E_0,B_0]\|_{L^s})
\end{eqnarray*}
and
\begin{eqnarray*}
  \|\na^j B\|_{L^q} &\lesssim& (1+t)^{-\de(j+1,p,q)}\|[u_0,E_0]\|_{L^p}+(1+t)^{-\de(j+2,p,q)}\|B_0\|_{L^p}\\
&&+(1+t)^{-\frac{1}{2}\de(j+1,p,q)}\|u_0\|_{L^p}+(1+t)^{-\frac{1}{2}\de(j+3,p,q)}\|E_0\|_{L^p}\\
&&+(1+t)^{-\frac{1}{2}\de(j,p,q)}\|B_0\|_{L^p}\\
&&+e^{-ct}(\|\na^{m(j-3,s,q)}u_0\|_{L^s}+\|\na^{m(j-5,s,q)}E_0\|_{L^s}
+\|\na^{m(j-6,s,q)}B_0\|_{L^s})\\
&&+ (1+t)^{-\frac{\ell}{2}}(\|\na^{m(j-2+\ell,s,q)}u_0\|_{L^s}+\|\na^{m(j+\ell,s,q)}[E_0,B_0]\|_{L^s})
\end{eqnarray*}
for any $t\geq 0$.
\end{theorem}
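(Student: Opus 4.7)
The plan is to reduce Theorem~\ref{thm.tdp} to a direct application of Lemma~\ref{lem.glidecay} applied atom-by-atom to the explicit pointwise Fourier bounds on $\CF(\na^j n),\CF(\na^j u),\CF(\na^j E),\CF(\na^j B)$ displayed immediately above the theorem. These bounds are themselves consequences of Theorem~\ref{thm.up}; the one subtlety already built in is that, for the $\na^j n$ estimate, I would first use the constraint $\hat n_0=-\frac{\ga}{\be}\,ik\cdot\hat E_0$ to trade the $|\hat n_0|$-coefficient on $\D_0$ for $|k|\,|\hat E_0|$, which is what produces the rate $(1+t)^{-\de(j+1,p,q)}$ for $\|\na^j n\|_{L^q}$ in the $E_0$-slot rather than the slower $\de(j,p,q)$.

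Each atomic term has the shape $|k|^{a}\,|\hat V_0(k)|\,e^{-\phi(k)t}\,\mathbf{1}_{\D}$ with $V_0\in\{n_0,u_0,E_0,B_0\}$, and I would sort them by the four possible decay profiles appearing in Theorem~\ref{thm.up}: (i) $\phi(k)=c|k|^2$ on $\D_0$, giving the heat-type rate $(1+t)^{-\de(j+a,p,q)}\|V_0\|_{L^p}$; (ii) $\phi(k)=c|k|^4$ on $\D_0$, yielding the slower rate $(1+t)^{-\de(j+a,p,q)/2}\|V_0\|_{L^p}$; (iii) $\phi(k)=c$ on $\D_1\cup\D_\infty$, giving the pure exponential $e^{-ct}\|\na^{m(j+a,s,q)}V_0\|_{L^s}$ after the standard high-frequency bookkeeping; (iv) $\phi(k)=c|k|^{-2}$ on $\D_\infty$, the regularity-loss profile giving $(1+t)^{-\ell/2}\|\na^{m(j+a+\ell,s,q)}V_0\|_{L^s}$. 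Each of these is a direct re-run of the two halves of the proof of Lemma~\ref{lem.glidecay}: the Hausdorff--Young inequality together with the change of variables $kt^{1/\sigma_+}\to k$ at low frequency, and the trivial bound $\sup_{|k|\geq L}|k|^{-\ell}e^{-c|k|^{-\sigma_-}t}\leq C(1+t)^{-\ell/\sigma_-}$ at high frequency, adjusted to the relevant spectral region.

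The remaining work is then pure bookkeeping: collecting the atomic contributions per component and matching them term by term to the four right-hand sides in the theorem. The chief verifications are the half-rates $(1+t)^{-\frac{1}{2}\de(j+\ast,p,q)}$ for $u$, $E$, $B$, which all come from the $e^{-c|k|^4t}$ block on $\D_0$ and require tracking the exact exponent of $|k|$ attached to each $\hat V_0$; for instance, the coefficient $|k|^{j+6}$ in front of $|\hat E_0|$ in the $\na^j E$ estimate is exactly what yields $(1+t)^{-\frac{1}{2}\de(j+6,p,q)}\|E_0\|_{L^p}$, and similarly $|k|^{j+4}|\hat u_0|$ produces $(1+t)^{-\frac{1}{2}\de(j+4,p,q)}\|u_0\|_{L^p}$. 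Note also that the regularity-loss Sobolev norms on the right-hand side involve only $u_0,E_0,B_0$ (and not $n_0$), because the $\hat n_0$-column of $\hat G^{\rm upp}$ over $\D_\infty$ in Theorem~\ref{thm.up} is tied solely to the pure $e^{-O(1)t}$ block. The main obstacle is nothing deeper than this combinatorial matching across many coefficients; once it is carried out systematically term by term, the four inequalities of the theorem are immediate.
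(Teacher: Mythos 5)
Your proposal is correct and follows the paper's route exactly: the paper itself simply states that Theorem \ref{thm.tdp} follows ``by applying Lemma \ref{lem.glidecay} to those estimates above,'' and your reconstruction of the atom-by-atom bookkeeping — sorting by the four decay profiles $e^{-c|k|^2 t}$, $e^{-c|k|^4 t}$, $e^{-ct}$, $e^{-ct/|k|^2}$, with the $|k|^{j+a}$ weight shifted into the derivative count and the constraint $\hat n_0=-\frac{\ga}{\be}ik\cdot\hat E_0$ invoked to upgrade the low-frequency $\hat n_0$ contribution — is precisely the intended argument. The only cosmetic imprecision is that the $\hat n_0$-column over $\D_\infty$ also appears in the $e^{-O(1)|k|^2 t}$ block, not solely in $e^{-O(1)t}$; but over $\D_\infty$ both are dominated by $e^{-ct}$, so the conclusion that the regularity-loss terms carry no $n_0$ dependence is still correct.
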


For later use, we need the following result which is an immediate corollary from Theorem \ref{thm.tdp}.

\begin{corollary}\label{cor.ld}
Under the assumption of Theorem \ref{thm.tdp}, one has
\begin{equation*}
  \|n\| \lesssim (1+t)^{-\frac{5}{4}} \|[n_0,u_0]\|_{L^1\cap L^2},
\end{equation*}
\begin{equation*}
  \|u\| \lesssim (1+t)^{-\frac{5}{4}}\|n_0\|_{L^1\cap L^2}+ (1+t)^{-\frac{3}{4}}\|[u_0,E_0]\|_{L^1\cap L^2}
  +(1+t)^{-\frac{5}{8}}\|B_0\|_{L^1\cap L^2},
\end{equation*}
\begin{multline*}
  \|E\| \lesssim (1+t)^{-\frac{3}{4}}\|u_0\|_{L^1\cap L^2}+ (1+t)^{-\frac{3}{4}}(\|E_0\|_{L^1\cap L^2}+\|\na^2 E_0\|)\\
+(1+t)^{-\frac{9}{8}}(\|B_0\|_{L^1\cap L^2}+\|\na^3 B_0\|),
\end{multline*}
and
\begin{multline*}
  \|B\| \lesssim (1+t)^{-\frac{5}{8}}\|u_0\|_{L^1\cap L^2}+ (1+t)^{-\frac{9}{8}}(\|E_0\|_{L^1\cap L^2}+\|\na^3 E_0\|)\\
+(1+t)^{-\frac{3}{8}}(\|B_0\|_{L^1\cap L^2}+\|\na B_0\|),
\end{multline*}
for any $t\geq 0$. Moreover, one also has
\begin{equation*}
  \|\na B\| \lesssim (1+t)^{-\frac{7}{8}}\|u_0\|_{L^1\cap L^2}+ (1+t)^{-\frac{5}{8}}(\|[E_0,B_0]\|_{L^1\cap L^2}+\|\na^3 [E_0,B_0]\|)
\end{equation*}
for any $t\geq 0$.
\end{corollary}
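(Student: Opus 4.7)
The plan is to specialize Theorem \ref{thm.tdp} to each component of $U=[n,u,E,B]$ with the choices $p=1$, $s=q=2$, and $j=0$ (or $j=1$ for the bound on $\|\nabla B\|$), tuning the regularity-loss parameter $\ell\geq 0$ separately for each initial-datum component so as to balance the algebraic rate $(1+t)^{-\ell/2}$ against the Sobolev order $m(\cdot,2,2)$ it costs. Since the bounds in Theorem \ref{thm.tdp} are linear in the contributions of $n_0,u_0,E_0,B_0$, different choices of $\ell$ may be used for different components. The surviving rate in each term is the minimum of (i) the low-frequency Gaussian rate $\de(j+a,1,2)$, (ii) the intermediate-frequency quartic-kernel rate $\frac{1}{2}\de(j+b,1,2)$, and (iii) the high-frequency regularity-loss rate $\ell/2$.

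For $\|n(t)\|$ the pointwise bound on $\CF(\nabla^j n)$ displayed just above Theorem \ref{thm.tdp} contains only Gaussian and purely exponential kernels (no $e^{-ct/|k|^2}$ regularity-loss piece), so only (i) contributes, giving $\de(1,1,2)=\frac{5}{4}$ from both $\|E_0\|_{L^1}$ and $\|u_0\|_{L^1}$, while the high-frequency exponential part yields $e^{-ct}\|[n_0,u_0]\|_{L^2}$; these add up to the stated estimate. For $\|u(t)\|$ the slowest rates from (i)--(ii) are $\frac{5}{4}$ for $n_0$, $\frac{3}{4}$ for $u_0$ and $E_0$, and $\frac{5}{8}$ for $B_0$; selecting $\ell=\frac{5}{4}$ in the $B_0$-part of the regularity-loss term makes $m(-2+\ell,2,2)=0$ so that no extra Sobolev order is needed on $B_0$, and taking $\ell=\frac{3}{2}$ for the $u_0$- and $E_0$-parts likewise forces $m(-4+\ell,2,2)=m(-2+\ell,2,2)=0$.

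The estimates for $\|E(t)\|$, $\|B(t)\|$, and $\|\nabla B(t)\|$ follow the same recipe. For $\|E\|$ the slowest rates are $\frac{3}{4},\frac{3}{4},\frac{9}{8}$ from $u_0,E_0,B_0$; choosing $\ell=\frac{3}{2}$ for the $E_0$-piece forces $m(\frac{3}{2},2,2)=2$ and produces the $(1+t)^{-3/4}\|\nabla^2 E_0\|$ term, and $\ell=\frac{9}{4}$ for $B_0$ forces $m(\frac{9}{4},2,2)=3$ and produces $(1+t)^{-9/8}\|\nabla^3 B_0\|$. For $\|B\|$ the slowest rates are $\frac{5}{8},\frac{9}{8},\frac{3}{8}$; the choices $\ell=\frac{5}{4},\frac{9}{4},\frac{3}{4}$ produce the advertised $\|u_0\|_{L^2}$, $\|\nabla^3 E_0\|$, and $\|\nabla B_0\|$ terms. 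For $\|\nabla B\|$ set $j=1$: the intermediate kernel $e^{-c|k|^4 t}$ contributes rate $\frac{1}{2}\de(2,1,2)=\frac{7}{8}$ from $u_0$, while $\ell=\frac{5}{4}$ on the $[E_0,B_0]$-piece gives $m(1+\frac{5}{4},2,2)=3$ and rate $\frac{5}{8}$.

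The main obstacle is bookkeeping: with three frequency regimes, four initial-data components, and the freedom to pick $\ell$ individually, one must verify in each row that the minimum of the three competing rates is indeed the exponent stated in the corollary and that the corresponding order $m(\cdot,2,2)$ matches the regularity displayed. Once the index arithmetic is carried out, the corollary is a direct specialization of Theorem \ref{thm.tdp}.
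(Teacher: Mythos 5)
Your strategy is the same as the paper's: specialize Theorem \ref{thm.tdp} with $p=1$, $s=q=2$, $j=0$ (and $j=1$ for $\|\na B\|$), treat each initial-datum component separately, and choose $\ell$ per component to make the regularity-loss rate match the surviving Gaussian/quartic rate at the smallest Sobolev cost. That is exactly the mechanism the paper relies on, and your index arithmetic for the $E_0$ and $B_0$ contributions in $\|E\|$, $\|B\|$, $\|\na B\|$ (the choices $\ell=\tfrac{3}{2},\tfrac{9}{4},\tfrac{5}{4},\tfrac{3}{4}$ and the resulting $m$-values) all check out.

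Two places where the verification is incomplete. First, and more importantly: for $\|\na B\|$ with $j=1$ you account for the quartic-kernel rate $\tfrac12\de(2,1,2)=\tfrac78$ from $u_0$, but you omit the regularity-loss piece $|k|^{j-2}|\hat u_0|e^{-ct/|k|^2}1_{\D_\infty}=|k|^{-1}|\hat u_0|e^{-ct/|k|^2}$, which contributes $(1+t)^{-\ell/2}\|\na^{m(-1+\ell,2,2)}u_0\|$. To reach rate $\tfrac78$ here one must take $\ell=\tfrac74$, which forces $m(\tfrac34,2,2)=1$, i.e.\ $\|\na u_0\|$ enters — it cannot be avoided, since restricting to $m=0$ forces $\ell<1$ and only rate $\tfrac12$. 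So this term is either a $\|\na u_0\|$-term at rate $\tfrac78$ or a $\|u_0\|_{L^2}$-term at rate $\tfrac12$; neither is absorbed by the displayed $(1+t)^{-7/8}\|u_0\|_{L^1\cap L^2}$, and you should flag this (the paper's own use of the corollary in the proof of Lemma \ref{lem.Y} indeed carries $\|\na h_2\|$ alongside $\|h_2\|_{L^1\cap L^2}$ at rate $\tfrac78$, which is the analogue of $\|\na u_0\|$). Second, a minor point you half-notice but then paper over: your derivation of $\|n\|$ produces the low-frequency contribution $(1+t)^{-5/4}\|E_0\|_{L^1}$ (after the substitution $\hat n_0 = -\tfrac{\ga}{\be}ik\cdot\hat E_0$ used in Theorem \ref{thm.tdp}), not $(1+t)^{-5/4}\|n_0\|_{L^1}$; without that substitution the $n_0$-piece only gives $\de(0,1,2)=\tfrac34$. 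Writing "these add up to the stated estimate" glosses over the fact that the bound obtained naturally depends on $E_0$ in $L^1$, which is what actually feeds into the Duhamel estimates later.
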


\section{Nonlinear asymptotic stability}\label{sec6}

In this section we are prepared to prove Theorem \ref{thm.main} concerning the global existence, uniqueness and time decay rates of solutions to the reformulated Cauchy problem \eqref{eq}, \eqref{eq.2}, \eqref{eq.id} for the nonlinear Navier-Stokes-Maxwell system. We first prove the global existence and uniqueness by only deriving the key uniform-in-time a priori energy estimates, and then apply the time decay property of the linearized system together with the further high-order energy estimates as well as the bootstrap argument to obtain the desired time decay rates of solutions.

\subsection{Global existence}
First of all, still using the notation $[n,u,E,B]$ for simplicity, let us write the reformulated nonlinear system \eqref{eq}, \eqref{eq.2} as
\begin{equation}\label{ge.e}
    \left\{\begin{array}{l}
     \dis \pa_t n +\ga \na\cdot u =h_1,\\
      \dis \pa_t u+\ga \na n +\be {E}-\mu\De u=h_2\\
       \dis \pa_t {E} - \na \times {B} -\be u = h_3,\\
      \dis \pa_t {B} + \na \times {E} = 0,\\
      \dis \na \cdot{E} = -\frac{\be}{\ga}n, \ \ \na\cdot {B} =0,
    \end{array}\right.
\end{equation}
where $\na\cdot h_3=-\frac{\be}{\ga}h_1$ and the nonlinear source terms $h_1,h_2,h_3$ behave as
\begin{eqnarray*}
h_1&\sim& \na \cdot (nu),\\
h_2&\sim& u\cdot \na u + n \na n +u\times B + n \De u,\\
h_3&\sim& nu.
\end{eqnarray*}
Initial data of the system is given by
\begin{equation}\label{ge.e.id}
    n|_{t=0}=n_0,\ u|_{t=0}=u_0,\ E|_{t=0}=E_0,\ B|_{t=0}=B_0
\end{equation}
satisfying
\begin{equation}\label{ge.e.idc}
  \na \cdot{E}_0 = -\frac{\be}{\ga}n_0, \ \ \na\cdot {B}_0 =0.
\end{equation}
Then, we have

\begin{theorem}\label{thm.ge}
Let $N\geq 4$. Assume \eqref{ge.e.idc} for initial data $U_0:=[n_0,u_0,E_0,B_0]$. If $\|U_0\|_{H^N}$ is sufficiently small, then the Cauchy problem \eqref{ge.e}, \eqref{ge.e.id} admits a unique global solution $U=[n,u,E,B]$ with
\begin{eqnarray*}
&&\dis U\in C([0,\infty);H^N(\R^3)),\\
&& n\in L^2((0,\infty);H^N(\R^3)),\
\na u\in L^2((0,\infty);H^N(\R^3)),\\
&&\na E\in L^2((0,\infty);H^{N-2}(\R^3)),\
\na^2 B\in L^2((0,\infty);H^{N-3}(\R^3)),
\end{eqnarray*}
and
\begin{multline}\label{thm.ge.1}
    \|U(t)\|_{H^N}^2+\int_0^t (\|n(s)\|_{H^N}^2+\|\na u(s)\|_{H^{N}}^2\\
    +\|\na E(s)\|_{H^{N-2}}^2+\|\na^2 B(s)\|_{H^{N-3}}^2)ds
    \leq C\|U_0\|_{H^N}^2
\end{multline}
for any $t\geq 0$.
\end{theorem}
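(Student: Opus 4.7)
The strategy is the standard continuation argument: combine a local existence result for small data in $H^N$ with a uniform a priori estimate of the form \eqref{thm.ge.1}. I would first establish local existence on $[0,T_*]$ by a standard Friedrichs mollifier/iteration scheme for symmetric hyperbolic--parabolic systems, noting that the constraint $\na\cdot E=-\frac{\be}{\ga}n$ is propagated in time by the approximating scheme since $\na\cdot h_3=-\frac{\be}{\ga}h_1$. Under the a priori hypothesis $\sup_{0\le t\le T}\|U(t)\|_{H^N}\le\eps$ with $\eps>0$ sufficiently small, I would derive the closed estimate \eqref{thm.ge.1} on $[0,T]$, which via continuation extends the local solution globally.

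The core task is therefore to build, in physical space, a Lyapunov functional mirroring the time-frequency functional $\CE(\hat U(t,k))$ of Theorem \ref{thm.ly} and Corollary \ref{cor.lya}. Concretely, for $0\le|\al|\le N$, I would first run the usual high-order $L^2$-energy estimate on \eqref{ge.e}: applying $\pa^\al$, taking inner product with $\pa^\al[n,u,E,B]$, and using the skew-symmetry of the first-order and zero-order linear parts to produce only the ``diffusion'' dissipation $\mu\|\na\pa^\al u\|^2$. To recover dissipation for $n$, $E$, $B$, I would then form, levelwise, three interaction functionals modeled on the cross terms in \eqref{def.CEf}:
\begin{equation*}
I^\al_1=-\lag \ga\na\pa^\al n,\pa^\al u\rag,\quad I^\al_2=\lag \pa^\al E,\pa^\al u\rag,\quad I^\al_3=-\lag \na\times\pa^\al B,\pa^\al E\rag,
\end{equation*}
the first controlled up to order $N$ with weight $\ka_1$, the second up to order $N-2$ with weight $\ka_1\ka_2$, and the third up to order $N-3$ with weight $\ka_1\ka_2\ka_3$ (small constants chosen consecutively). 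Using \eqref{ge.e} to compute their time derivatives and applying Cauchy--Schwarz as in the proof of Theorem \ref{thm.ly} produces, modulo nonlinear terms, dissipations of $\|n\|_{H^N}^2$, $\|\na E\|_{H^{N-2}}^2$ and $\|\na^2 B\|_{H^{N-3}}^2$ respectively. The total energy $\CE_N(U)\eqdef \|U\|_{H^N}^2+\sum_\al(\ka_1 I^\al_1+\ka_1\ka_2 I^\al_2+\ka_1\ka_2\ka_3 I^\al_3)$ is then equivalent to $\|U\|_{H^N}^2$ and its time derivative controls the full dissipation
\begin{equation*}
\CD_N(U)\eqdef \|n\|_{H^N}^2+\|\na u\|_{H^N}^2+\|\na E\|_{H^{N-2}}^2+\|\na^2 B\|_{H^{N-3}}^2.
\end{equation*}

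The main obstacle is the treatment of the nonlinear terms, because the coupling makes the estimates delicate both for the standard energy part and for the interaction functionals. The cubic-looking terms $\pa^\al(nu)$, $\pa^\al(u\cdot\na u)$, $\pa^\al(n\na n)$, $\pa^\al(n\De u)$ are handled by Moser-type commutator estimates, using $H^N\hookrightarrow W^{1,\infty}$ for $N\ge 4$ and the smallness of $\|U\|_{H^N}$, yielding bounds of the form $C\eps\,\CD_N(U)$ with the crucial observation that each product contains at least one factor that is either $n$ or $\na u$ (which are in the dissipation) or can be paired against a gradient from integration by parts. The genuinely awkward term is $\pa^\al(u\times B)$ appearing in the equation for $u$: when $|\al|=N$ and all derivatives fall on $B$, the resulting term is $\lag u\cdot\pa^\al B,\pa^\al u\rag$ type, which after Cauchy--Schwarz needs a piece of $\|\na^N B\|$ that is \emph{not} in $\CD_N$ (recall only $\|\na^2 B\|_{H^{N-3}}$ sits in the dissipation because of the regularity loss). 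I would resolve this by integrating by parts to move one derivative from $B$, using $\na\cdot B=0$ and $\na\times B=\pa_t E-\be u-h_3$ from \eqref{ge.e}, so that the highest-order term is rewritten in terms of $\pa_t E$ (absorbed by a time derivative into the energy after commuting with $u$) and of $u$ and $E$ which \emph{are} in $\CD_N$. The term in the interaction functional for $B$ coming from $\pa_t E$-substitution is handled analogously. Combining these nonlinear estimates with the linear part yields
\begin{equation*}
\frac{d}{dt}\CE_N(U)+c\,\CD_N(U)\le C\eps\,\CD_N(U),
\end{equation*}
which, for $\eps$ small enough, closes the a priori bound and gives \eqref{thm.ge.1}; uniqueness follows by the same kind of energy argument on the difference of two solutions in a lower Sobolev norm.
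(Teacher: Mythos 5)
Your overall plan --- a physical-space Lyapunov functional mirroring $\CE(\hat U)$ from Theorem \ref{thm.ly}, closed by a continuation argument --- is exactly the paper's strategy, and you correctly flag the regularity-loss in $B$ as the source of difficulty. But there are two concrete gaps.

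First, the range $|\al|\le N$ for $I^\al_1=-\lag\ga\na\pa^\al n,\pa^\al u\rag$ is one order too high: at $|\al|=N$ the quantity $\na\pa^\al n$ is $\na^{N+1}n$, which is not controlled by $\|U\|_{H^N}^2$ (integrating by parts instead produces $\na^{N+1}u$, also out of reach), so $\CE_N$ as you set it up fails to be equivalent to the $H^N$ norm. The paper stops at $|\al|\le N-1$ for this piece and uses $\lag\pa^\al\na\times E,\pa^\al\na\times u\rag$ up to $|\al|\le N-2$ and $\lag-\pa^\al\na\times B,\pa^\al E\rag$ for $1\le|\al|\le N-2$; these ranges are tuned so that each interaction term is dominated by $\|U\|_{H^N}^2$ and its time derivative yields dissipation exactly at the orders in $\CD_N$.

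Second, for $u\times B$ the invocation of $\na\times B=\pa_t E-\be u-h_3$ is an unnecessary detour. At $|\al|=N$ a single integration by parts already works: it trades $\pa^\al B\sim\na^N B$ (not in $\CD_N$) for $\na^{N-1}B$ (which \emph{is} in $\CD_N$, since $\|\na^2 B\|_{H^{N-3}}$ covers up to $\na^{N-1}B$) while pushing $\na^N u$ to $\na^{N+1}u$ (still in $\CD_N$ via $\|\na u\|_{H^N}$). Your substitution reintroduces $\pa_t E$, which then forces further corrections to the energy and more commutator terms --- strictly more work than the paper's route. You also miss the equally delicate \emph{low}-order contribution: at $|\al|=1$, $\lag u\times\na B,\na u\rag$ involves $\na B$, which is likewise absent from $\CD_N$. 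There one integrates by parts twice (the cross-product antisymmetry annihilates the intermediate term, leaving something of the form $\lag u\times\De u,B\rag$) and then interpolates, e.g.\ $\|B\|_{L^3}\lesssim\|B\|^{1/2}\|\na B\|^{1/2}$, so that the bound is $\CE_N^{1/2}\CD_N$ rather than $\CE_N\CD_N^{1/2}$; the latter cannot be absorbed into $c\CD_N$ no matter how small $\CE_N$ is, because $\CD_N$ involves one derivative more than $\CE_N$ and is therefore not bounded by it.
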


\begin{proof}
We only consider the proof of the following uniform-in-time a priori estimate.
Define
\begin{eqnarray*}
\CE_N(U(t))&=&\|[n,u,E,B]\|_{H^N}^2
+\kappa_1 \sum_{|\al|\leq N-1} \lag \pa^\al (\ga \na n)\cdot \pa^\al u \rag  \\
&&+\kappa_1 \sum_{|\al|\leq N-2} \lag \pa^\al \na \times E \cdot\pa^\al \na\times u \rag   \\
&&+\kappa_1\kappa_2\sum_{1\leq |\al|\leq N-2} \lag  \pa^\al (-\na\times B)\cdot \pa^\al E \rag  ,
\end{eqnarray*}
and
\begin{equation*}
    \CD_N(U(t))=\|n\|_{H^N}^2+\|\na u\|_{H^N}^2+\|\na E\|_{H^{N-2}}^2+\|\na^2 B\|_{H^{N-3}}^2.
\end{equation*}
Here, constants $0<\kappa_1,\kappa_2\ll 1$ are properly chosen as in \eqref{def.CEf}.
Then, for any smooth solution $U$ to the Cauchy problem \eqref{ge.e}, \eqref{ge.e.id} over $0\leq t\leq T$ with $T>0$, one has
\begin{equation}\label{thm.ge.p1}
    \frac{d}{dt}\CE_N(U(t))+c\CD_N (U(t)) \leq C(\|n\|_{L^\infty})\CE_N(U(t))^{1/2}\CD_N(U(t))
\end{equation}
for any $0\leq t\leq T$. As long as the above estimate is proved, Theorem \ref{thm.ge} follows in the standard way by combining it with the local-in-time existence and uniqueness as well as the continuity argument. Therefore, in what follows we prove \eqref{thm.ge.p1} only, and other details are omitted for simplicity.

In fact, zero-order energy estimate implies
\begin{multline*}
\dis \frac{1}{2}\frac{d}{dt}\|[n,u,E,B]\|_{H^N}^2 +\mu \|\na u\|_{H^N}^2\\
\dis =\sum_{|\al|\leq N}
(\lag\pa^\al h_1,\pa^\al n \rag +\lag  \pa^\al h_2\cdot \pa^\al u \rag  +\lag  \pa^\al h_3 \cdot \pa^\al E \rag  ).
\end{multline*}
In the same way as in the proof of Theorem \ref{thm.ly}, from the first two equations of \eqref{ge.e} and $\na\cdot E=-\frac{\be}{\ga}n$, one has
\begin{multline*}
\dis \frac{d}{dt} \sum_{|\al|\leq N-1} \lag \pa^\al (\ga \na n),\pa^\al u \rag
+ c\sum_{|\al|\leq N-1} \int_{\R^3} (\ga^2|\pa^\al \na n|^2+\be^2|\pa^\al n|^2)dx \\
\dis \leq C \|\na u\|_{H^N}^2 + \sum_{|\al|\leq N-1}\lag \pa^\al (\ga \na h_1), \pa^\al u \rag
+ \sum_{|\al|\leq N-1}\lag \pa^\al (\ga \na n), \pa^\al h_2 \rag.
\end{multline*}
By also using the curl of the second equation of \eqref{ge.e}, i.e.
\begin{equation*}
    \pa_t \na\times u +\be \na\times E -\mu \De \na \times u=\na\times h_2
\end{equation*}
together with the third equation of \eqref{ge.e}, it follows
\begin{multline*}
\dis \frac{d}{dt} \sum_{|\al|\leq N-2} \lag \pa^\al \na \times E,\pa^\al \na\times u \rag
+c\sum_{|\al|\leq N-2}\int_{\R^3}|\pa^\al \na\times E|^2dx  \\
\dis \leq \eps \sum_{1\leq |\al|\leq N-2}\int_{\R^3}|\pa^\al \na\times B|^2dx   +\frac{C}{\eps}\|\na u\|_{H^N}^2
\dis \\
+ \sum_{|\al|\leq N-2}\lag \pa^\al\na\times h_3, \pa^\al \na\times u \rag
+ \sum_{|\al|\leq N-2}\lag \pa^\al \na\times E, \pa^\al\na\times h_2 \rag
\end{multline*}
for an arbitrary  constant $0<\eps\leq 1$. Moreover, the combination of the third and fourth equations in \eqref{ge.e} gives
\begin{multline*}
\dis \frac{d}{dt} \sum_{1\leq |\al|\leq N-2} \lag  \pa^\al (-\na\times B), \pa^\al E \rag
+c\sum_{1\leq |\al|\leq N-2}\int_{\R^3} |\pa^\al \na\times B|^2dx  \\
\dis \leq \sum_{1\leq |\al|\leq N-2}\int_{\R^3} |\pa^\al \na\times E|^2dx   +C\sum_{1\leq |\al|\leq N-2}\int_{\R^3} |\pa^\al u|^2dx \\
\dis -\sum_{1\leq |\al|\leq N-2}\lag \pa^\al \na\times B , \pa^\al h_3 \rag  .
\end{multline*}
Then, by putting the above estimates together for properly chosen constants $0<\kappa_1,\kappa_2,\eps\ll 1$, one has
\begin{eqnarray}\label{thm.ge.p2}
&&\ \ \   \frac{d}{dt}\CE_N(U(t))+c\CD_N (U(t)) \\
&&\leq  \sum_{|\al|\leq N}(\lag  \pa^\al h_1, \pa^\al n \rag
+\lag  \pa^\al h_2, \pa^\al u \rag
+\lag  \pa^\al h_3, \pa^\al E \rag  )\nonumber\\
&&\ \ \ + \kappa_1\sum_{|\al|\leq N-1}\lag \pa^\al (\ga \na h_1), \pa^\al u \rag
+ \kappa_1\sum_{|\al|\leq N-1}\lag \pa^\al (\ga \na n), \pa^\al h_2 \rag  \nonumber\\
&&\ \ \ +\kappa_1\sum_{|\al|\leq N-2}\lag \pa^\al \na\times E, \pa^\al\na\times h_2 \rag
+ \kappa_1\sum_{|\al|\leq N-2}\lag \pa^\al\na\times h_3, \pa^\al \na\times u \rag\nonumber\\
&&\ \ \ -\kappa_1\kappa_2\sum_{1\leq |\al|\leq N-2}\lag \pa^\al \na\times B , \pa^\al h_3 \rag.\nonumber
\end{eqnarray}
The rest is a long and standard procedure to verify by using the Leibniz formula and the Sobolev inequalities \cite{Adams,Ta} that the whole right-hand term of the above inequality is bounded by $C(\|n\|_{L^\infty})\CE_N(U(t))^{1/2}\CD_N(U(t))$, where $C$ depends only on $\|n\|_{L^\infty}$; for simplicity, we also omit the proof's details of the rest part, cf.~\cite{DLUY} and \cite{Du-EM}. The proof of Theorem \ref{thm.ge} is complete.
\end{proof}

\subsection{Large-time behavior} From now, we suppose that all conditions in Theorem \ref{thm.ge} hold and $U=[n,u,E,B]$ is the obtained solution to the Cauchy problem \eqref{ge.e}, \eqref{ge.e.id} with the condition \eqref{ge.e.idc}. In this subsection we devote ourselves to  proving the time decay rate of the full energy $\|U(t)\|_{H^N}^2$ or equivalently $\CE_N(U(t))$. For that purpose, define
\begin{equation*}
    X(t)=\sup_{0\leq s\leq t} (1+s)^{\frac{3}{4}}\CE_N(U(s)),\ \ t\geq 0.
\end{equation*}
In fact, we have

\begin{lemma}\label{lem.X}
If $\|U_0\|_{L^1\cap H^{N+1}}$ is sufficiently small, then
\begin{equation}\label{lem.X.1}
    \sup_{t\geq 0} X(t)\leq C\|U_0\|_{L^1\cap H^{N+1}}^2.
\end{equation}

\end{lemma}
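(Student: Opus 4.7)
The plan is to set up a bootstrap on $X(t)$, aiming to derive
\begin{equation*}
X(t) \leq C\|U_0\|_{L^1\cap H^{N+1}}^2 + C X(t)^2,
\end{equation*}
which, by smallness of $\|U_0\|_{L^1\cap H^{N+1}}$ and continuity of $X$, yields \eqref{lem.X.1}. The central tool is the mild-solution representation
\begin{equation*}
U(t) = \semiG(t)U_0 + \int_0^t \semiG(t-s)F(U)(s)\,ds,\qquad F(U):=[h_1,h_2,h_3,0],
\end{equation*}
where $\semiG(t)$ denotes the linear solution operator of \eqref{leq} studied in Sections \ref{sec3}--\ref{sec5}.

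First I would bound each $\|\na^j U(t)\|$, $0\leq j\leq N$, via Theorem \ref{thm.tdp} applied with $p=s=1$ on the low-frequency side. The slowest-decaying component is the magnetic field, giving rate $(1+t)^{-3/8}$ at $j=0$ by Corollary \ref{cor.ld}; to match this on the regularity-loss tail I would take $\ell$ of order $3/4$, which pulls in one extra spatial derivative on the initial data. This is exactly what the $H^{N+1}$ hypothesis supplies. Summing over $0\leq j\leq N$ yields
\begin{equation*}
\|U(t)\|_{H^N}\lesssim (1+t)^{-3/8}\|U_0\|_{L^1\cap H^{N+1}}+\int_0^t (1+t-s)^{-3/8}\|F(U)(s)\|_{L^1\cap H^{N+1}}\,ds.
\end{equation*}

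Next, for the nonlinear source the quadratic structure $h_1\sim \na(nu)$, $h_2 \sim u\cdot\na u+n\na n+u\times B+n\De u$, $h_3\sim nu$ together with Moser and Sobolev product estimates (available since $N\geq 4$) gives
\begin{equation*}
\|F(U)(s)\|_{L^1}\lesssim \|U(s)\|_{H^N}^2\lesssim X(s)(1+s)^{-3/4},
\end{equation*}
while $\|F(U)(s)\|_{H^{N+1}}\lesssim \|U(s)\|_{H^N}\|U(s)\|_{H^{N+1}}$. To handle the latter uniformly in time, I would first run an $H^{N+1}$-level analog of Theorem \ref{thm.ge}, propagating $\|U(s)\|_{H^{N+1}}\lesssim \|U_0\|_{H^{N+1}}$ for all $s\geq 0$; this is the same energy argument carried one derivative higher and invokes no new cancellations. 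The standard convolution identity $\int_0^t(1+t-s)^{-3/8}(1+s)^{-3/4}\,ds\lesssim (1+t)^{-3/8}$ then closes the nonlinear contribution, and squaring and multiplying by $(1+t)^{3/4}$ delivers the bootstrap inequality above.

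The main obstacle is the regularity-loss phenomenon: the top-order Maxwell derivatives entering $\CE_N$ do not decay exponentially under the linearized flow, so polynomial-in-$t$ decay can only be purchased at the cost of one extra derivative on the data. This cost must be allocated consistently---the $H^{N+1}$ hypothesis pays for the linear contribution through the $\ell$-parameter in Theorem \ref{thm.tdp}, and the analogous nonlinear contribution demands that the $H^{N+1}$ energy estimate be propagated in time. Verifying that the $N\to N+1$ extension of Theorem \ref{thm.ge} incurs no additional losses, and pairing in each Moser product the highest-Sobolev factor with the lowest-regularity Duhamel kernel so that everything stays under the $H^{N+1}$ ceiling, is the delicate balancing act at the heart of the proof.
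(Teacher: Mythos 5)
Your proposal is a pure Duhamel bootstrap on $\|U(t)\|_{H^N}$, whereas the paper's proof runs a time-weighted \emph{energy} estimate on $\CE_N$ and invokes Duhamel only for the zeroth-order norms $\|u\|,\|E\|,\|B\|$. The gap in your plan is twofold and, as stated, fatal.

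First, the convolution identity you invoke is false. With the slowest kernel rate $(1+t-s)^{-3/8}$ against the quadratic source decay $(1+s)^{-3/4}$, the standard splitting at $s=t/2$ gives
\begin{equation*}
\int_0^t (1+t-s)^{-\frac{3}{8}}(1+s)^{-\frac{3}{4}}\,ds \sim (1+t)^{-\frac{3}{8}}\cdot(1+t)^{\frac14}+(1+t)^{-\frac34}\cdot(1+t)^{\frac58}\sim(1+t)^{-\frac{1}{8}},
\end{equation*}
not $(1+t)^{-3/8}$, so the bootstrap does not close. What rescues the paper is the structural fact that the nonlinear source $F=[h_1,h_2,h_3,0]$ has no magnetic component; the $(1+t)^{-3/8}$ rate is the $B_0\to B$ entry $\CM_{33}$ and never enters the Duhamel integral. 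The kernel entries actually hitting $h_2$ and $h_3$ and feeding back into $B$ are $\CM_{31}\sim(1+t-s)^{-5/8}$ and $\CM_{32}\sim(1+t-s)^{-9/8}$, and $\int_0^t(1+t-s)^{-5/8}(1+s)^{-3/4}\,ds\lesssim(1+t)^{-3/8}$ does close. By flattening Theorem~\ref{thm.tdp} to the single worst rate, you discard exactly the gain that makes the estimate work.

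Second, the Sobolev estimate $\|F(U)\|_{H^{N+1}}\lesssim\|U\|_{H^N}\|U\|_{H^{N+1}}$ is false for $h_2$: it contains $n\De u$, so $\na^{N+1}h_2$ has a leading term $n\,\na^{N+3}u$, requiring $u\in H^{N+3}$. This is the usual loss of derivatives of the mild formulation for quasilinear parabolic terms, and no choice of $\ell$ in Theorem~\ref{thm.tdp} cures it. The paper avoids the issue altogether by never putting high-order derivatives of $F$ through the Green's function: the $H^N$-level control comes from the time-weighted differential inequality $\frac{d}{dt}\CE_N+c\CD_N\leq 0$ (where the energy method absorbs $n\De u$ by integration by parts), the leftover $\int(1+s)^{-1/4+\eps}\CD_{N+1}\,ds$ is paid for by $\CE_{N+1}(U_0)$, and Duhamel only appears for $\|[u,E,B]\|$, which needs no more than $\|h_3\|_{H^3}$ from the source. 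A related slip: you propose $p=s=1$, but taking $s=1$ would push $m(N+\ell,1,2)$ to $N+3$, exceeding the $H^{N+1}$ hypothesis; the paper takes $s=2$ for the regularity-loss tail so that one extra derivative suffices.
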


\begin{proof}
Under smallness assumption of $\|U_0\|_{H^N}$, \eqref{thm.ge.p1} implies
\begin{equation}\label{lem.X.p1}
    \frac{d}{dt}\CE_N(U(t))+c\CD_N (U(t)) \leq 0
\end{equation}
for any $t\geq 0$. This is the starting point to deduce \eqref{lem.X.1}. In fact, fix a constant $\eps>0$ small enough. Then, the further time weighted estimate on \eqref{lem.X.p1} gives
\begin{multline*}
\dis (1+t)^{\frac{3}{4}+\eps}\CE_N(U(t))+c\int_0^t(1+s)^{\frac{3}{4}+\eps}\CD_N(U(s)) ds\\
\dis \leq \CE_N(U_0)+ (\frac{3}{4}+\eps) \int_0^t(1+s)^{-\frac{1}{4}+\eps}\CE_N(U(s))ds.
\end{multline*}
Noticing
\begin{multline*}
  \CE_{N}(U) \sim  \|U\|_{H^N}^2 \leq   \CD_{N+1}(U)+\|[u,E,B]\|^2+\|\na B\|^2\\
\leq C\CD_{N+1}(U)+\|[u,E,B]\|^2
\end{multline*}
and
\begin{equation*}
    \int_0^t\CD_{N+1}(U(s))ds\leq C \CE_{N+1}(U_0),
\end{equation*}
it follows that
\begin{multline}\label{lem.X.p2}
\dis (1+t)^{\frac{3}{4}+\eps}\CE_N(U(t))+\int_0^t(1+s)^{\frac{3}{4}+\eps}\CD_N(U(s)) ds\\
\dis \leq C\CE_{N+1}(U_0)+ C \int_0^t(1+s)^{-\frac{1}{4}+\eps}\|[u,E,B]\|^2 ds.
\end{multline}
Due to Corollary \ref{cor.ld}, one has
\begin{eqnarray*}
\|B\| &\leq & C(1+t)^{-\frac{3}{8}}\|[u_0,E_0,B_0\|_{L^1\cap H^3}\\
&&+C\int_0^t(1+t-s)^{-\frac{5}{8}} \|h_2(s)\|_{L^1\cap L^2}ds\\
&&+C\int_0^t(1+t-s)^{-\frac{9}{8}} (\|h_3(s)\|_{L^1\cap L^2}+\|\na^3 h_3(s)\|)ds,
\end{eqnarray*}
\begin{eqnarray*}
\|E\| &\leq & C(1+t)^{-\frac{3}{4}}\|[u_0,E_0,B_0\|_{L^1\cap H^3}\\
&&+C\int_0^t(1+t-s)^{-\frac{3}{4}} \|h_2(s)\|_{L^1\cap L^2}ds\\
&&+C\int_0^t(1+t-s)^{-\frac{3}{4}} (\|h_3(s)\|_{L^1\cap L^2}+\|\na^2 h_3(s)\|)ds,
\end{eqnarray*}
and
\begin{eqnarray*}
\|u\| &\leq & C(1+t)^{-\frac{5}{8}}\|U_0\|_{L^1\cap L^2}\\
&&+C\int_0^t(1+t-s)^{-\frac{5}{4}} \|h_1(s)\|_{L^1\cap L^2}ds\\
&&+C\int_0^t(1+t-s)^{-\frac{3}{4}} \|[h_2(s),h_3(s)]\|_{L^1\cap L^2}ds.
\end{eqnarray*}
It is straightforward to verify
\begin{equation*}
   \|[h_1,h_2,h_3\|_{L^1\cap L^2}+\|h_3\|_{H^3}\leq C \CE_N(U).
\end{equation*}
Then, $\|B\|$ is estimated by
\begin{eqnarray*}
\|B\| &\leq & C(1+t)^{-\frac{3}{8}}\|[u_0,E_0,B_0\|_{L^1\cap H^3}\\
&&+C\int_0^t(1+t-s)^{-\frac{5}{8}}(1+s)^{-\frac{3}{4}}ds X(t)\\
&\leq & C(1+t)^{-\frac{3}{8}}(\|[u_0,E_0,B_0\|_{L^1\cap H^3}+X(t)),
\end{eqnarray*}
and in the same way, it holds that
\begin{equation*}
  \|[u,E,B]\| \leq  C(1+t)^{-\frac{3}{8}}(\|U_0\|_{L^1\cap H^3}+X(t)).
\end{equation*}
Therefore, one has
\begin{eqnarray*}
 && \int_0^t(1+s)^{-\frac{1}{4}+\eps}\|[u,E,B]\|^2 ds \\
&&\leq  C\int_0^t(1+s)^{-\frac{1}{4}+\eps}(1+s)^{-\frac{3}{4}} ds (\|U_0\|_{L^1\cap H^3}^2+X(t)^2)\\
&&\leq C(1+t)^\eps (\|U_0\|_{L^1\cap H^3}^2+X(t)^2).
\end{eqnarray*}
Substituting it into \eqref{lem.X.p2} gives
\begin{multline}\label{lem.X.p3}
\dis (1+t)^{\frac{3}{4}+\eps}\CE_N(U(t))+\int_0^t(1+s)^{\frac{3}{4}+\eps}\CD_N(U(s)) ds\\
\dis \leq C(1+t)^\eps (\|U_0\|_{L^1\cap H^{N+1}}^2+X(t)^2)
\end{multline}
which implies
\begin{equation*}
    X(t)\leq C(\|U_0\|_{L^1\cap H^{N+1}}^2+X(t)^2).
\end{equation*}
Since $\|U_0\|_{L^1\cap H^{N+1}}$ is sufficiently small, $X(t)$ is bounded uniformly in time and also \eqref{lem.X.1} holds true. This completes the proof Lemma \ref{lem.X}.
\end{proof}

\subsection{Optimal large-time behavior}

According to Corollary \ref{cor.ld} in the linear case, the time rate obtained in Lemma \ref{lem.X} is not optimal with respect to $n$, $u$ and $E$ in the solution $U=[n,u,E,B]$. In fact, the further estimates imply the following improved result.

\begin{theorem}\label{thm.odecay}
Let $N\geq 4$. Assume that initial data $U_0:=[n_0,u_0,E_0,B_0]$ satisfies \eqref{ge.e.idc} and $\|U_0\|_{L^1\cap H^{N+2}}$ is sufficiently small. Let $U=[n,u,E,B]$  be the solution to the Cauchy problem \eqref{ge.e}, \eqref{ge.e.id},  \eqref{ge.e.idc} obtained in Theorem \ref{thm.ge}. Then,
\begin{eqnarray*}
  \|n(t)\| &\lesssim& (1+t)^{-1}\|U_0\|_{L^1\cap H^{N+2}},\\
    \|u(t)\| &\lesssim& (1+t)^{-\frac{5}{8}}\|U_0\|_{L^1\cap H^{N+2}},\\
      \|E(t)\| &\lesssim& (1+t)^{-\frac{3}{4}}\ln (3+t)\|U_0\|_{L^1\cap H^{N+2}},\\
        \|B(t)\| &\lesssim& (1+t)^{-\frac{3}{8}}\|U_0\|_{L^1\cap H^{N+1}},
\end{eqnarray*}
for any $t\geq 0$.
\end{theorem}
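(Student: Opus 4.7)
The plan is to use the Duhamel principle to represent the nonlinear solution as the sum of a linear part and a convolution with the nonlinear source, then combine the refined Green's function estimates (Theorem \ref{thm.tdp}, Corollary \ref{cor.ld}) with a bootstrap argument driven by Lemma \ref{lem.X}. Writing $U=[n,u,E,B]$ and $h=[h_1,h_2,h_3,0]$ with $h_1 \sim \nabla\cdot(nu)$, $h_2 \sim u\cdot\nabla u + n\nabla n + u\times B + n\Delta u$, $h_3\sim nu$, I would view the system \eqref{ge.e} in the form $\partial_t U - LU = h$ so that $U(t)=e^{tL}U_0 + \int_0^t e^{(t-s)L}h(s)\,ds$. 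Apply Corollary \ref{cor.ld} to the linear part for the rates $(1+t)^{-5/4},(1+t)^{-5/8},(1+t)^{-3/4},(1+t)^{-3/8}$ of $n,u,E,B$ respectively.

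For the nonlinear term, estimate $\|h(s)\|_{L^1\cap L^2}$ and higher Sobolev norms using H\"older, Sobolev embedding, and the a priori decay $\CE_N(U(s))\lesssim (1+s)^{-3/4}$ from Lemma \ref{lem.X}. In particular, the crude estimate $\|h_j\|_{L^1\cap L^2}+\|\nabla^3 h_j\|\lesssim \CE_N(U(s))\lesssim(1+s)^{-3/4}$ holds and it gives, via the convolution estimates, the decay rates $(1+t)^{-5/8}$ for $\|u\|$ and $(1+t)^{-3/8}$ for $\|B\|$, matching the linear ones. Set up a bootstrap functional
\begin{equation*}
    M(t)=\sup_{0\leq s\leq t}\Bigl[(1+s)\|n(s)\|+(1+s)^{\frac58}\|u(s)\|+\frac{(1+s)^{\frac34}}{\ln(3+s)}\|E(s)\|+(1+s)^{\frac38}\|B(s)\|\Bigr],
\end{equation*}
and aim to show $M(t)\leq C\|U_0\|_{L^1\cap H^{N+2}}+CM(t)^2$.

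Once the rates of $u$ and $B$ are closed in the bootstrap (these are straightforward, because the convolution integrals $\int_0^t(1+t-s)^{-\theta}(1+s)^{-3/4}\,ds$ remain of the right order for $\theta\in\{\tfrac34,\tfrac58\}$), use the improved rates inside $h_2$ to refine the estimate for $E$. The critical observation is that after bootstrapping, $\|u\times B\|_{L^1}\lesssim \|u\|\,\|B\|\lesssim(1+s)^{-5/8-3/8}=(1+s)^{-1}$, so the Duhamel formula for $E$ (Theorem \ref{thm.tdp} applied with $j=0$) produces
\begin{equation*}
    \int_0^t(1+t-s)^{-3/4}(1+s)^{-1}\,ds\lesssim(1+t)^{-3/4}\ln(3+t),
\end{equation*}
which forces the logarithmic loss and gives the stated rate for $\|E\|$. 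All remaining bilinear pieces of $h_1,h_2,h_3$ are at least as integrable (they involve at least one factor with decay $(1+s)^{-5/8}$ or better, or a gain in derivatives that activates the $L^2$ branch of the convolution), so they yield pure $(1+t)^{-3/4}$ contributions to $\|E\|$.

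For $n$, exploit the constraint $n=-(\gamma/\beta)\nabla\cdot E$ from \eqref{ge.e}, so $\|n(t)\|\leq(\gamma/\beta)\|\nabla E(t)\|$; hence it suffices to bound $\|\nabla E\|$. Using Theorem \ref{thm.tdp} with $j=1$ yields a linear piece of order $(1+t)^{-5/4}$ together with a nonlinear convolution against kernels decaying like $(1+t-s)^{-5/4}$. The only slowly integrable source is again $u\times B$, and splitting $\int_0^{t/2}+\int_{t/2}^t$ gives
\begin{equation*}
    \int_0^t(1+t-s)^{-5/4}(1+s)^{-1}\,ds\lesssim(1+t)^{-1},
\end{equation*}
which dominates all other contributions and matches the stated $(1+t)^{-1}$ rate. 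Combining the four inequalities and inserting them into the definition of $M(t)$ closes the bootstrap $M(t)\leq C\|U_0\|_{L^1\cap H^{N+2}}+CM(t)^2$, and smallness of initial data yields $M(t)\lesssim\|U_0\|_{L^1\cap H^{N+2}}$.

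The main obstacle I anticipate is the careful tracking of where derivatives must be placed on which factor in the bilinear source terms: one must make sure, e.g., that $\|\nabla^3 h_3\|$ used in the $B$-estimate and the $\|\nabla^2 h_3\|$ appearing in the $E$-estimate are controlled by $\CE_N(U)$ with $N\geq 4$ (hence the $H^{N+2}$ requirement on initial data after the loss of three derivatives in the high-frequency part of the Green's function, see \eqref{est.G1}). The bottleneck is genuinely the $u\times B$ term: it is the only nonlinear term whose $L^1$-decay is strictly worse than $(1+s)^{-5/4}$, and it is responsible both for the logarithmic correction in $\|E\|$ and for breaking the linear rate $(1+t)^{-5/4}$ of $\|n\|$ down to $(1+t)^{-1}$.
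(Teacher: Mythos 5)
Your overall scaffolding (Duhamel with the Green's function estimates, bootstrap, $u\times B$ as the bottleneck producing the log in $\|E\|$ and the $(1+t)^{-1}$ in $\|n\|$) matches the paper's plan, and your observation about where the $H^{N+2}$ requirement comes from is correct. However, as written the bootstrap does not close, and the gap is exactly where you flag your own unease about ``tracking derivatives on the bilinear terms.''

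The functional $M(t)$ tracks only the $L^2$ norms of $n,u,E,B$, and the only auxiliary decay you invoke is $\CE_N(U(s))\lesssim(1+s)^{-3/4}$ from Lemma~\ref{lem.X}, i.e.\ $\|U(s)\|_{H^N}\lesssim(1+s)^{-3/8}$. This is not enough for two reasons. First, your claim that the crude bound $\|h_j\|_{L^1\cap L^2}\lesssim(1+s)^{-3/4}$ already gives $\|u\|\lesssim(1+t)^{-5/8}$ is wrong arithmetically: $\int_0^t(1+t-s)^{-3/4}(1+s)^{-3/4}\,ds\lesssim(1+t)^{-1/2}$, not $(1+t)^{-5/8}$, so the $u$ rate does not start from the crude bound. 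Second, and more seriously, to feed $(1+s)^{-1}$ into the kernel $(1+t-s)^{-3/4}$ for the $E$ estimate (producing the log) you must show $\|h_2\|_{L^1\cap L^2}\lesssim(1+s)^{-1}$. The $L^1$ part is fine: $\|u\times B\|_{L^1}\leq\|u\|\,\|B\|\lesssim M(t)^2(1+s)^{-1}$. But the $L^2$ part needs sup-type control of one factor, and the best you can do with only $\|u\|\lesssim(1+s)^{-5/8}$ and $\|\nabla u\|,\|\nabla B\|\lesssim(1+s)^{-3/8}$ is, e.g., $\|u\times B\|_{L^2}\lesssim\|u\|^{1/2}\|\nabla u\|^{1/2}\|\nabla B\|\lesssim(1+s)^{-7/8}$. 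Convolving $(1+t-s)^{-3/4}$ with $(1+s)^{-7/8}$ yields $(1+t)^{-5/8}$, which does not close the $E$ (or $n$) part of $M(t)$.

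What is missing is the faster decay $(1+s)^{-5/8}$ of the \emph{derivative} norms $\|\nabla U(s)\|_{H^{N-1}}$ in tandem with the $(1+s)^{-5/8}$ decay of $\|u\|$. The paper supplies this through a separate high-order time-weighted energy inequality (Lemma~\ref{lem.hee}) and the auxiliary functional $Y(t)=\sup_s(1+s)^{5/4}\bigl[\|u(s)\|^2+\CE_N^{\rm h}(U(s))\bigr]$ (Lemma~\ref{lem.Y}); closing $Y$ first is what makes $\|u\|_{L^\infty}\lesssim(1+s)^{-5/8}$ available and hence $\|[h_2,h_3]\|_{L^1\cap L^2}\lesssim(1+s)^{-1}$. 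Your proposal does not introduce any analogue of this high-order estimate, and without it the $E$ and $n$ parts of the bootstrap cannot be closed. (Your route to $\|n\|$ via the constraint $n=-(\gamma/\beta)\nabla\cdot E$ is a workable alternative to the paper's direct application of Corollary~\ref{cor.ld} to $n$, but it suffers from the same missing ingredient.)
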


We shall prove the above theorem as follows. The estimate of $B$ in fact has been obtained in  Lemma \ref{lem.X}, the estimate of $u$ will be  given in Lemma \ref{lem.Y} and the estimates of $n$ and $E$ will be given in Lemma \ref{lem.nE}.

Thus, we first consider the time decay estimate of  $u$. Before that, we need a high-order energy inequality. Define
\begin{eqnarray*}
\CE_N^{\rm h}(U(t))&=&\|\na [n,u,E,B]\|_{H^{N-1}}^2
+\kappa_1 \sum_{1\leq |\al|\leq N-1} \lag \pa^\al (\ga \na n), \pa^\al u \rag  \\
&&+\kappa_2 \sum_{1\leq |\al|\leq N-2} \lag \pa^\al \na \times E ,\pa^\al \na\times u \rag   \\
&&+\kappa_1\kappa_2\sum_{2\leq |\al|\leq N-2} \lag  \pa^\al (-\na\times B), \pa^\al E \rag  ,
\end{eqnarray*}
and
\begin{equation*}
    \CD_N^{\rm h}(U(t))=\|\na n\|_{H^{N-1}}^2+\|\na^2 u\|_{H^{N-1}}^2+\|\na^2 E\|_{H^{N-3}}^2+\|\na^3 B\|_{H^{N-4}}^2.
\end{equation*}
Here, constants $0<\kappa_1,\kappa_2\ll 1$ are properly chosen as in \eqref{def.CEf}. Then, we have

\begin{lemma}\label{lem.hee}
If $\|U_0\|_{H^N}$ is sufficiently small, then
\begin{equation}\label{lem.hee.1}
    \frac{d}{dt}\CE_N^{\rm h}(U(t))+c  \CD_N^{\rm h}(U(t))\leq C(\|u\|^2+\CE_N^{\rm h}(U(t)))\CE_N^{\rm h}(U(t))
\end{equation}
for any $t\geq 0$.
\end{lemma}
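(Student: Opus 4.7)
The approach will be to replicate the energy argument of Theorem \ref{thm.ge} essentially verbatim, but with the derivative level restricted to $|\alpha|\geq 1$ (and to $|\alpha|\geq 2$ for the magnetic-field correction term), so that the gained dissipation starts one spatial derivative higher in $n,u,E$ and two derivatives higher in $B$ than in $\CD_N$. This excludes the slowly-decaying zero-order pieces $\|[u,E,B]\|^2$ from the dissipation on the left, which is precisely what is needed to upgrade the time-decay rates beyond what Lemma \ref{lem.X} already gives. Aside from the restriction in $|\alpha|$, the correctors in $\CE_N^{\rm h}(U(t))$ are identical in form to those in $\CE_N(U(t))$, so the linear structure is inherited directly from the proof of Theorem \ref{thm.ly}.

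First I would apply $\pa^\alpha$ with $1\leq|\alpha|\leq N$ to each equation of \eqref{ge.e} and take the $L^2$ inner product with $\pa^\alpha n$, $\pa^\alpha u$, $\pa^\alpha E$, $\pa^\alpha B$ respectively. Thanks to the skew-symmetry of the hyperbolic and zero-order coupling blocks (as in \eqref{thm.ly.p00}), this gives
\begin{equation*}
\tfrac{1}{2}\tfrac{d}{dt}\|\na U\|_{H^{N-1}}^2 + \mu\|\na^2 u\|_{H^{N-1}}^2 \;=\; \sum_{1\leq|\alpha|\leq N}\bigl(\lag\pa^\alpha h_1,\pa^\alpha n\rag+\lag\pa^\alpha h_2,\pa^\alpha u\rag+\lag\pa^\alpha h_3,\pa^\alpha E\rag\bigr).
\end{equation*}
Differentiating the three correctors in $\CE_N^{\rm h}(U(t))$ in time and using the equations \eqref{ge.e} together with the constraints $\na\cdot E=-\tfrac{\be}{\ga}n$ and $\na\cdot B=0$, I would reproduce the three identities \eqref{thm.ly.p01}, \eqref{thm.ly.p3} and \eqref{thm.ly.p4} at each frequency level $|\alpha|$, modified only by contributions from $h_1,h_2,h_3$. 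The dissipative contributions supplied are, respectively, $c\|\na n\|_{H^{N-1}}^2$, $c\|\na\times E\|_{H^{N-2}}^2$ (which together with the constraint $\na\cdot E=-\tfrac{\be}{\ga}n$ and the already-produced dissipation of $\|\na n\|_{H^{N-1}}^2$ upgrades to $c\|\na^2 E\|_{H^{N-3}}^2$), and $c\|\na\times\na B\|_{H^{N-4}}^2\sim c\|\na^3 B\|_{H^{N-4}}^2$ (using $\na\cdot B=0$). Choosing $0<\kappa_1,\kappa_2\ll 1$ in the same hierarchical manner as in \eqref{def.CEf} absorbs the induced linear cross terms and produces the linear part of \eqref{lem.hee.1} with full dissipation $\CD_N^{\rm h}(U(t))$.

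The nonlinear remainder consists of terms of the schematic form $\lag\pa^\alpha(f\,g),\pa^\alpha w\rag$ (possibly with one extra derivative from the correctors) where $f,g$ range over $n,u,B,\na u$ and $w$ over $n,u,E,\na\times u$. By Leibniz, Moser-type product estimates and the Sobolev and Gagliardo--Nirenberg inequalities on $\R^3$ (using $N\geq 4$), the ``generic'' contribution is bounded by $C\sqrt{\CE_N^{\rm h}}\,\CD_N^{\rm h}\lesssim \CE_N^{\rm h}\,\CD_N^{\rm h}$ and thus absorbed into $c\CD_N^{\rm h}$ for small data. The exceptional contributions arise when the Leibniz expansion forces a factor of $u$ itself (not any of its derivatives) to appear, for instance from the splitting $\pa^\alpha(u\times B)=\sum_\beta C_{\alpha,\beta}\,\pa^\beta u\times\pa^{\alpha-\beta}B$ at $\beta=0$, or from $\pa^\alpha(u\cdot\na u)$ at $\beta=0$, or from $\pa^\alpha(nu)$ in $h_3$; then $\|u\|$ cannot be placed inside $\CD_N^{\rm h}$. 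For these terms, H\"older together with Sobolev embedding $\|\pa^{\alpha}B\|_{L^3}\lesssim\|\na B\|_{L^2}^{1/2}\|\na^2B\|_{L^2}^{1/2}$ (and analogous interpolations) yields bounds of the shape $C\|u\|^2\,\CE_N^{\rm h}(U(t))$ after using Cauchy's inequality to transfer the companion factor into $\eps\CD_N^{\rm h}(U(t))$. Collecting all estimates and absorbing $\eps\CD_N^{\rm h}$ into the left-hand side yields \eqref{lem.hee.1}.

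The main obstacle is precisely the bookkeeping in this last step: one must verify that every Leibniz branch producing an undifferentiated $u$ (or other low-order factor not controlled by $\CD_N^{\rm h}$) really admits a bound with the \emph{linear} coefficient $\|u\|^2$ rather than a cubic coefficient like $\|u\|\cdot\sqrt{\CE_N^{\rm h}}$. This sharper form is essential because in the subsequent bootstrap (see Lemma \ref{lem.nE}) the factor $\|u\|^2$ is known a priori to decay as $(1+t)^{-5/4}$ via Corollary \ref{cor.ld} and Lemma \ref{lem.X}, which is strictly faster than the decay one can assume for $\CE_N^{\rm h}$ at this stage; were $\|u\|^2$ replaced by $\CE_N^{\rm h}$, the Gr\"onwall argument used to extract the optimal decay rates of $n$ and $E$ would break down.
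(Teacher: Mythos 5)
Your proposal follows the same high-order weighted-energy strategy as the paper: restrict the energy functional and its correctors to $|\alpha|\geq 1$ (resp.\ $\geq 2$ for the magnetic corrector), extract the dissipation $\CD_N^{\rm h}$ from the linear structure exactly as in Theorem~\ref{thm.ge}, and then estimate the nonlinear remainders, correctly isolating the $u\times B$ term in $h_2$ as the unique source of an undifferentiated $u$ whose $L^2$ norm cannot be placed inside $\CD_N^{\rm h}$ and must survive as the $\|u\|^2$ coefficient. This is exactly the paper's proof (which further streamlines the corrector cross-terms via $h_1=-\tfrac{\ga}{\be}\na\cdot h_3$ into $\|\na[h_2,h_3]\|_{H^{N-2}}^2$ before singling out $\lag u\times\pa^\alpha B,\pa^\alpha u\rag$), so the approach and the key structural observation are the same.
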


\begin{proof}
Similar to obtain \eqref{thm.ge.p2}, from the system \eqref{ge.e}, one has
\begin{eqnarray*}
&&\ \ \   \frac{d}{dt}\CE_N^{\rm h}(U(t))+c\CD_N^{\rm h} (U(t)) \\
&&\ \ \ \leq  \sum_{1\leq |\al|\leq N}(\lag  \pa^\al h_1, \pa^\al n \rag
+\lag  \pa^\al h_2, \pa^\al u \rag
+\lag  \pa^\al h_3, \pa^\al E \rag  )\\
&&\ \ \ + \kappa_1\sum_{1\leq |\al|\leq N-1}\lag \pa^\al (\ga \na h_1), \pa^\al u \rag
+ \kappa_1\sum_{1\leq |\al|\leq N-1}\lag \pa^\al (\ga \na n), \pa^\al h_2 \rag  \\
&&\ \ \ +\kappa_1\sum_{1\leq |\al|\leq N-2}\lag \pa^\al \na\times E, \pa^\al\na\times h_2 \rag
+\kappa_1\sum_{1\leq |\al|\leq N-2}\lag \pa^\al \na\times h_3, \pa^\al\na\times u\rag\\
&&\ \ \ -\kappa_1\kappa_2\sum_{2\leq |\al|\leq N-2}\lag \pa^\al \na\times B , \pa^\al h_3 \rag,
\end{eqnarray*}
which together with $h_1=-\frac{\ga}{\be} \na \cdot h_3$ imply
\begin{multline}\label{lem.hee.p1}
\dis\frac{d}{dt}\CE_N^{\rm h}(U(t))+c\CD_N^{\rm h} (U(t)) \\
\dis \lesssim  \sum_{1\leq |\al|\leq N}(\lag  \pa^\al h_1, \pa^\al n \rag
+\lag  \pa^\al h_2, \pa^\al u \rag
+\lag  \pa^\al h_3, \pa^\al E \rag  )
+\|\na [h_2,h_3]\|_{H^{N-2}}^2.
\end{multline}
It is easy to see
\begin{equation*}
    \|\na [h_2,h_3]\|_{H^{N-2}}^2\lesssim \CE_N^{\rm h}(U(t))\left[\CE_N^{\rm h}(U(t))+\CD_N^{\rm h}(U(t))\right].
\end{equation*}
In what follows we estimate the right-hand first term of \eqref{lem.hee.p1}. Let $1\leq |\al|\leq N$. First, it holds that
\begin{equation*}
  \lag  \pa^\al h_3, \pa^\al E \rag= \lag  \pa^\al (nu), \pa^\al E \rag \lesssim \|\na n\|_{H^{n-1}}\|\na u\|_{H^{N-1}}\|\na E\|_{H^{N-1}}.
\end{equation*}
Next, for the term containing $h_2$, one has
\begin{eqnarray*}
&& \lag  \pa^\al h_2, \pa^\al u \rag\\
&&\sim \lag \pa^\al (u\cdot \na u), \pa^\al u\rag +\lag \pa^\al (n \na n),\pa^\al u \rag \\
 &&\ \ \ +\lag \pa^\al (u\times B), \pa^\al u \rag +\lag \pa^\al (n \De u), \pa^\al u \rag\\
 &&\lesssim\|\na u\|_{H^{N-1}}\|\na^2 u\|_{H^{N-1}}\|\na u\|_{H^{N-1}}
 +\|\na n\|_{H^{N-1}}^2\|\na^2 u\|_{H^{N-1}}\\
 &&\ \ \ +(\|u\|\cdot\|\na B\|\cdot \|\na^2 u\|_{H^1}+\|\na u\|_{H^{N-1}}\|\na B\|_{H^{N-1}}\|\na^2 u\|_{H^{N-2}})\\
 &&\ \ \ +\|\na n\|_{H^{N-1}}\|\na^2 u\|_{H^{N-1}}^2.
\end{eqnarray*}
Similarly,
\begin{equation*}
   \lag  \pa^\al h_1, \pa^\al n \rag\sim \lag  \pa^\al\na \cdot (nu), \pa^\al n \rag \lesssim \|\na n\|_{H^{N-1}}^2  \|\na^2 u\|_{H^{N-1}}.
\end{equation*}
By collecting the above estimates, it follows from \eqref{lem.hee.p1} that
\begin{multline}\label{lem.hee.p2}
\dis\frac{d}{dt}\CE_N^{\rm h}(U(t))+c\CD_N^{\rm h} (U(t)) \\
\dis \lesssim  (\|u\|+\CE_N^{\rm h}(U(t))^{1/2})\CE_N^{\rm h}(U(t))^{1/2}\CD_N^{\rm h}(U(t))^{1/2}\\
+\CE_N^{\rm h}(U(t))^{1/2}\CD_N^{\rm h}(U(t))
+\CE_N^{\rm h}(U(t))\left[\CE_N^{\rm h}(U(t))+\CD_N^{\rm h}(U(t))\right].
\end{multline}
Since $\|U_0\|_{H^N}$ is sufficiently small and so is $\CE_N(U(t))$ uniformly for all $t\geq 0$ by \eqref{thm.ge.1}, \eqref{lem.hee.1} holds by applying the Cauchy inequality to \eqref{lem.hee.p2}. This completes the proof of Lemma \ref{lem.hee}.
\end{proof}

Furthermore, we define
\begin{equation*}
    Y(t)=\sup_{0\leq s\leq t}\left\{(1+s)^{\frac{5}{4}}\left[\|u(s)\|^2+\CE_N^{\rm h}(U(s))\right]\right\},\ \ t\geq 0.
\end{equation*}
Similar to obtain the uniform-in-time bound of $X(t)$ in Lemma \ref{lem.X}, we have the following result to show the boundedness of $Y(t)$ for all $t\geq 0$ and thus the time decay rate of $\|u\|$.

\begin{lemma}\label{lem.Y}
If $\|U_0\|_{L^1\cap H^{N+2}}$ is sufficiently small, then
\begin{equation}\label{lem.Y.1}
    \sup_{t\geq 0} Y(t)\leq C \|U_0\|_{L^1\cap H^{N+2}}^2.
\end{equation}
\end{lemma}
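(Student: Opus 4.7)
The plan is to mimic the bootstrap scheme of Lemma~\ref{lem.X}, but applied to the high-order energy $\CE_N^{\rm h}$ in place of $\CE_N$ and with the sharper time weight $(1+t)^{5/4+\eps}$ in place of $(1+t)^{3/4+\eps}$. The exponent $5/4$ is dictated by the slowest gradient decay in the linearized system, namely $\|\na B\|\lesssim (1+t)^{-5/8}$ from Corollary~\ref{cor.ld}. Throughout, the already-obtained estimate $X(t)\lesssim \|U_0\|_{L^1\cap H^{N+1}}^2$ from Lemma~\ref{lem.X} is used freely to give $\|U(s)\|_{H^N}\lesssim X^{1/2}(1+s)^{-3/8}$, and the bootstrap hypothesis on $Y(t)$ provides $\|\na U(s)\|_{H^{N-1}}\lesssim Y^{1/2}(1+s)^{-5/8}$.

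The first step is to apply Lemma~\ref{lem.hee}. Under the smallness inherited from Theorem~\ref{thm.ge}, the cubic term $\CE_N^{\rm h}\cdot\CE_N^{\rm h}$ is absorbed into $c\CD_N^{\rm h}$, so $\tfrac{d}{dt}\CE_N^{\rm h}+c\CD_N^{\rm h}\leq C\|u\|^2\CE_N^{\rm h}$. Multiplying by $(1+t)^{5/4+\eps}$ with $0<\eps\ll 1$ and integrating yields
\begin{multline*}
(1+t)^{5/4+\eps}\CE_N^{\rm h}(t)+c\int_0^t(1+s)^{5/4+\eps}\CD_N^{\rm h}(s)\,ds\\
\leq\CE_N^{\rm h}(U_0)+C\int_0^t(1+s)^{1/4+\eps}\CE_N^{\rm h}(s)\,ds+C\int_0^t(1+s)^{5/4+\eps}\|u(s)\|^2\CE_N^{\rm h}(s)\,ds.
\end{multline*}
The last integral is bounded by $CY(t)^2\int_0^t(1+s)^{\eps-5/4}\,ds\lesssim Y(t)^2$. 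For the middle one I employ the high-order analogue of the interpolation used in Lemma~\ref{lem.X}, namely $\CE_N^{\rm h}(U)\leq C\CD_{N+1}^{\rm h}(U)+C\|\na[u,E,B]\|^2$, together with the uniform bound $\int_0^\infty\CD_{N+1}^{\rm h}(s)\,ds\lesssim\|U_0\|_{H^{N+2}}^2$ obtained by applying Theorem~\ref{thm.ge} at level $N+1$. This reduces matters to estimating $\|u(t)\|^2$ and $\|\na[u,E,B](t)\|^2$ directly.

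To this end I would invoke the Duhamel representation $U(t)=e^{\La t}U_0+\int_0^t e^{\La(t-s)}[h_1,h_2,h_3,0]^{T}(s)\,ds$ together with the componentwise decay from Corollary~\ref{cor.ld} and Theorem~\ref{thm.tdp}. Each bilinear source in $[h_1,h_2,h_3]$---schematically $u\cdot\na u$, $n\na n$, $u\times B$, $n\De u$, $nu$ and $\na\cdot(nu)$---pairs one $X$-rate factor at $(1+s)^{-3/8}$ with a $Y$-rate factor at $(1+s)^{-5/8}$, via Sobolev embedding for the $L^\infty$ factors (which is why $N\geq 4$ is needed), giving
\begin{equation*}
\|[h_1,h_2,h_3](s)\|_{L^1\cap L^2}\lesssim (X(t)Y(t))^{1/2}(1+s)^{-1}.
\end{equation*}
Convolving with the appropriate linear kernels then produces
\begin{equation*}
\|u(t)\|^2+\|\na[u,E,B](t)\|^2\lesssim (1+t)^{-5/4}\bigl(\|U_0\|_{L^1\cap H^{N+2}}^2+X(t)Y(t)\bigr),
\end{equation*}
up to harmless logarithmic factors absorbed by the $(1+t)^\eps$ margin in the weighted energy estimate. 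Substituting back and dividing by $(1+t)^\eps$ yields the closed inequality $Y(t)\leq C\bigl(\|U_0\|_{L^1\cap H^{N+2}}^2+X(t)Y(t)+Y(t)^2\bigr)$, and smallness of $X(t)$ combined with standard continuation delivers \eqref{lem.Y.1}.

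The main technical obstacle is the convolution estimate for $\|\na B\|^2$. Because the linear semigroup decay of $\|\na B\|$ is only $(1+t)^{-5/8}$, there is essentially no margin for the nonlinear contribution, in particular for the bilinear term $u\times B$ in $h_2$ which drives the electromagnetic block. Closing the argument hinges on exploiting the refined off-diagonal pointwise bounds of Theorem~\ref{thm.up} on the $B\leftarrow u$ and $B\leftarrow E$ components of $\hat{G}^{\rm upp}$, which decay strictly faster than the diagonal $B\leftarrow B$ component, combined with the standard splitting of the convolution integral $\int_0^t=\int_0^{t/2}+\int_{t/2}^t$ to handle the borderline source decay $(1+s)^{-1}$. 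This is precisely the situation where the detailed Green's function analysis developed in Sections~\ref{sec3}--\ref{sec5} is indispensable, rather than the cruder Lyapunov bound~\eqref{est.G2}.
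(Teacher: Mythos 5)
Your proposal correctly identifies the architecture of the proof: start from Lemma~\ref{lem.hee}, absorb the cubic term into $\CD_N^{\rm h}$ using smallness, multiply by $(1+t)^{5/4+\eps}$, bound the $\|u\|^2\CE_N^{\rm h}$-integral by $Y(t)^2$, reduce the remaining integral by an interpolation of $\CE_N^{\rm h}$ against a dissipation functional of one higher order plus low-order leftover terms, estimate those leftovers by Duhamel, and close the bootstrap for $Y$ together with a direct Duhamel bound for $\|u\|$. This is the paper's strategy.

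However, the step where you control the middle integral has a genuine gap. You invoke the interpolation $\CE_N^{\rm h}(U)\leq C\CD_{N+1}^{\rm h}(U)+C\|\na[u,E,B]\|^2$ together with the \emph{unweighted} dissipation bound $\int_0^\infty\CD_{N+1}^{\rm h}(s)\,ds\lesssim\|U_0\|_{H^{N+2}}^2$. But the integral you must control is $\int_0^t(1+s)^{1/4+\eps}\CD_{N+1}^{\rm h}(s)\,ds$ with a \emph{positive} time weight; the unweighted $L^1_t$ bound only gives $\int_0^t(1+s)^{1/4+\eps}\CD_{N+1}^{\rm h}(s)\,ds\leq(1+t)^{1/4+\eps}\int_0^\infty\CD_{N+1}^{\rm h}\,ds$, which is far worse than the required $(1+t)^{\eps}$. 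In Lemma~\ref{lem.X} this trick works because the weight there is $(1+s)^{-1/4+\eps}$ with a \emph{negative} exponent, so the unweighted dissipation integral suffices; that cancellation is lost once you pass to the $5/4$ rate. The paper closes the gap by inserting the time-weighted dissipation estimate \eqref{lem.X.p3} applied at level $N+1$, i.e. $\int_0^t(1+s)^{3/4+\eps}\CD_{N+1}(U(s))\,ds\lesssim(1+t)^{\eps}\bigl(\|U_0\|_{L^1\cap H^{N+2}}^2+X_{N+1}(t)^2\bigr)$, which dominates the integral you need since $1/4+\eps<3/4+\eps$. Without importing that weighted bound, your argument does not close.

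A secondary and less serious point: the paper's interpolation is $\CE_N^{\rm h}(U)\lesssim\CD_{N+1}(U)+\|\na B\|^2$, using the \emph{full} $\CD_{N+1}$ rather than $\CD_{N+1}^{\rm h}$. Because $\CD_{N+1}$ already controls $\|\na u\|_{H^{N-1}}^2$ and $\|\na E\|_{H^{N-1}}^2$, the only leftover term is $\|\na B\|^2$, and the whole borderline-convolution difficulty is localized to the magnetic field. Your interpolation against $\CD_{N+1}^{\rm h}$ leaves three zero-order gradients $\|\na[u,E,B]\|^2$ to be estimated by Duhamel, forcing you to run the borderline convolution analysis for all three components instead of one. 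This is workable (Theorem~\ref{thm.tdp} does give $\|\na u\|,\|\na E\|\lesssim(1+t)^{-5/8}$ or better), but it is extra work that the sharper choice avoids, and the paper's proof explicitly singles out $\|\na B\|$ as the only term requiring the refined off-diagonal rates.
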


\begin{proof}
The starting point is based on the high-order energy inequality \eqref{lem.hee.1}. Fix $\eps>0$ small enough. The time-weighted estimate on \eqref{lem.hee.1} gives
\begin{multline}\label{lem.Y.p1}
(1+t)^{\frac{5}{4}+\eps}\CE_N^{\rm h}(U(t))+\int_0^t (1+s)^{\frac{5}{4}+\eps}\CD_N^{\rm h}(U(s))ds \\
\lesssim \CE_N(U_0)+ \int_0^t (1+s)^{\frac{5}{4}+\eps}[\|u(s)\|^2+\CE_N^{\rm h}(U(s))]\CE_N^{\rm h}(U(s))ds\\
+\int_0^t(1+s)^{\frac{1}{4}+\eps}\CE_N^{\rm h}(U(s))ds.
\end{multline}
It is easy to see that the right-hand second term is bounded by $Y(t)^2$ since
\begin{equation*}
 \int_0^t (1+s)^{\frac{5}{4}+\eps}(1+s)^{-\frac{5}{4}-\frac{5}{4}}ds\leq C
\end{equation*}
for $\eps>0$ small enough. For the last term of \eqref{lem.Y.p1}, one has
\begin{multline*}
\int_0^t(1+s)^{\frac{1}{4}+\eps}\CE_N^{\rm h}(U(s))ds\leq \int_0^t(1+s)^{\frac{1}{4}+\eps}[\CD_{N+1}(U(s))+\|\na B(s)\|^2]ds\\
\lesssim (1+t)^\eps\|U_0\|_{L^1\cap H^{N+2}}^2+ \int_0^t(1+s)^{\frac{1}{4}+\eps}\|\na B(s)\|^2ds.
\end{multline*}
where \eqref{lem.X.p3} was used. Here, from Corollary \ref{cor.ld}, one can estimate $\|\na B\|$ as
\begin{multline*}
\|\na B\|\lesssim (1+t)^{-\frac{5}{8}}\|U_0\|_{L^1\cap H^4}\\
+\int_0^t (1+t-s )^{-\frac{7}{8}}(\|h_2(s)\|_{L^1\cap L^2}+\|\na h_2(s)\|)ds\\
+\int_0^t (1+t-s )^{-\frac{11}{8}}(\|h_3(s)\|_{L^1\cap L^2}+\|\na^4 h_3(s)\|)ds.
\end{multline*}
Since
\begin{multline*}
\|h_2(t)\|_{L^1\cap L^2}+\|\na h_2(t)\|+\|h_3(t)\|_{L^1\cap L^2}+\|\na^4 h_3(t)\|\\
\lesssim \|U_0\|_{L^1\cap H^{N+1}} (1+t)^{-\frac{3}{8}} (1+t)^{-\frac{5}{8}} Y(t)^{1/2}\\
\lesssim \|U_0\|_{L^1\cap H^{N+1}} (1+t)^{-1} Y(t)^{1/2},
\end{multline*}
it follows
\begin{equation*}
\|\na B\|\lesssim (1+t)^{-\frac{5}{8}}\|U_0\|_{L^1\cap H^{N+1}}(1+Y(t)^{1/2}).
\end{equation*}
Putting the above estimates into \eqref{lem.Y.p1} yields
\begin{eqnarray*}
&&(1+t)^{\frac{5}{4}+\eps}\CE_N^{\rm h}(U(t))+\int_0^t (1+s)^{\frac{5}{4}+\eps}\CD_N^{\rm h}(U(s))ds \\
&&\lesssim (1+t)^\eps\|U_0\|_{L^1\cap H^{N+2}}^2+ Y(t)^2\\
&&\ \ \ +\int_0^t(1+s)^{\frac{1}{4}+\eps}(1+s)^{-\frac{5}{4}}\|U_0\|_{L^1\cap H^{N+1}}^2(1+Y(s))ds\\
&&\lesssim (1+t)^\eps \|U_0\|_{L^1\cap H^{N+2}}^2(1+Y(t))+Y(t)^2,
\end{eqnarray*}
which implies
\begin{equation}\label{lem.Y.p2}
    \sup_{0\leq s\leq t}(1+s)^{\frac{5}{4}} \CE_N^{\rm h}(U(s))\lesssim \|U_0\|_{L^1\cap H^{N+2}}^2(1+Y(t))+Y(t)^2.
\end{equation}
The rest is to estimate the optimal time-decay of $\|u\|$ in terms of $Y(t)$. In fact,
for $\|u\|$, from Corollary \ref{cor.ld}, one has
\begin{multline*}
\|u(t)\|\lesssim (1+t)^{-\frac{5}{8}}\|U_0\|_{L^1\cap L^2}
+\int_0^t (1+t-s )^{-\frac{5}{4}}\|h_1(s)\|_{L^1\cap L^2}ds\\
+\int_0^t (1+t-s )^{-\frac{3}{4}}\|[h_2(s),h_3(s)\|_{L^1\cap L^2}ds\\
\lesssim  (1+t)^{-\frac{5}{8}}\|U_0\|_{L^1\cap H^{N+1}} Y(t)^{1/2},
\end{multline*}
that is,
\begin{equation}\label{lem.Y.p3}
    \sup_{0\leq s\leq t}(1+s)^{\frac{5}{4}} \|u(s)\|^2\lesssim \|U_0\|_{L^1\cap H^{N+1}}^2Y(t).
\end{equation}
Combining \eqref{lem.Y.p2} and \eqref{lem.Y.p3} gives
\begin{equation*}
    Y(t)\leq C\|U_0\|_{L^1\cap H^{N+2}}^2 (1+Y(t))+Y(t)^2.
\end{equation*}
Therefore \eqref{lem.Y.1} holds true since $\|U_0\|_{L^1\cap H^{N+2}}$ is sufficiently small. This completes the proof of Lemma \ref{lem.Y}.
\end{proof}

To complete the proof of Theorem \ref{thm.odecay}, the rest is to consider the time decay estimates of $n$ and $E$. In fact, we have

\begin{lemma}\label{lem.nE}
If $\|U_0\|_{L^1\cap H^{N+2}}$ is sufficiently small, then
\begin{eqnarray}
   \|n(t)\|&\leq& C\|U_0\|_{L^1\cap H^{N+2}}(1+t)^{-1},\label{lem.nE.1}\\
   \|E(t)\|&\leq& C\|U_0\|_{L^1\cap H^{N+2}}(1+t)^{-\frac{3}{4}}\ln (3+t), \label{lem.nE.2}
\end{eqnarray}
for any $t\geq 0$.
\end{lemma}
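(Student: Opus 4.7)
The plan is to run a Duhamel argument against the refined linearized estimate of Corollary \ref{cor.ld} and to bootstrap on the decay rates already established in Lemmas \ref{lem.X} and \ref{lem.Y}. First I would write
\begin{equation*}
U(t)=\semiG(t)U_0+\int_0^t \semiG(t-s)[h_1,h_2,h_3,0]^T(s)\,ds,
\end{equation*}
where $\semiG$ denotes the linear semigroup; since the source satisfies the compatibility $\nabla\cdot h_3=-(\beta/\gamma)h_1$, the constraint is preserved and the $n$- and $E$-estimates of Corollary \ref{cor.ld} apply to both the initial datum and to $[h_1(s),h_2(s),h_3(s),0]^T$ at each time slice. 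The linear evolution already contributes $(1+t)^{-5/4}\|U_0\|_{L^1\cap L^2}$ to $\|n\|$ and $(1+t)^{-3/4}\|U_0\|_{L^1\cap H^{N+2}}$ to $\|E\|$, both stronger than the target rates.

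Next I would bound the nonlinear sources in the norms demanded by Corollary \ref{cor.ld}. Using H\"older and the known rates
\begin{equation*}
\|u(t)\|\lesssim(1+t)^{-5/8},\ \|B(t)\|\lesssim(1+t)^{-3/8},\ \|n(t)\|\lesssim(1+t)^{-3/8},\ \|\nabla U(t)\|_{H^{N-1}}\lesssim(1+t)^{-5/8}
\end{equation*}
(the last three coming from $\CE_N(U)\lesssim(1+t)^{-3/4}$ in Lemma \ref{lem.X} and $\CE_N^{\rm h}(U)\lesssim(1+t)^{-5/4}$ in Lemma \ref{lem.Y}), one sees that the four summands of $h_2\sim u\cdot\nabla u+n\nabla n+u\times B+n\Delta u$ are all $O((1+s)^{-1})$ in $L^1$, with the Lorentz piece $u\times B$ and also $n\nabla n$, $n\Delta u$ saturating the rate; likewise $\|h_3\|_{L^1}=\|nu\|_{L^1}\lesssim(1+s)^{-1}$. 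The corresponding $L^2$-norms and the higher-derivative quantity $\|\nabla^2 h_3\|$ that appears in Corollary \ref{cor.ld} for $E$ are controlled at the same rate or faster via Gagliardo--Nirenberg and the uniform $H^N$ smallness.

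Finally I would perform the time convolutions. For $n$, splitting at $s=t/2$,
\begin{equation*}
\int_0^t(1+t-s)^{-5/4}(1+s)^{-1}\,ds\lesssim(1+t)^{-5/4}\ln(1+t)+(1+t)^{-1}\lesssim(1+t)^{-1},
\end{equation*}
where the second summand arises on $[t/2,t]$ from the integrability of $(1+u)^{-5/4}$ near $u=0$; this yields \eqref{lem.nE.1}. For $E$,
\begin{equation*}
\int_0^t(1+t-s)^{-3/4}(1+s)^{-1}\,ds\lesssim(1+t)^{-3/4}\ln(3+t),
\end{equation*}
the logarithm arising on $[0,t/2]$ where the kernel is $\sim(1+t)^{-3/4}$ and $\int_0^{t/2}(1+s)^{-1}\,ds\sim\ln(1+t)$. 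Combined with the exponentially decaying contributions from the high-frequency parts in Theorem \ref{thm.tdp}, this gives \eqref{lem.nE.2}.

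The main obstacle will be the $E$-convolution: the Lorentz nonlinearity $u\times B$ decays in $L^1$ at exactly the nonintegrable critical rate $(1+s)^{-1}$, being the product of the optimal rate $(1+t)^{-5/8}$ of $\|u\|$ and the saturated rate $(1+t)^{-3/8}$ of $\|B\|$, and the $E$-kernel $(1+t-s)^{-3/4}$ is precisely log-critical against it. Removing the $\ln$ would therefore require a structural improvement (such as extracting cancellation from $u\times B$ through the magnetic-field equation) beyond the bootstrap scheme used here.
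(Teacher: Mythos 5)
Your proposal is correct and follows essentially the same route as the paper: Duhamel against Corollary \ref{cor.ld} applied to the sources $[h_1,h_2,h_3,0]$, the bound $\|[h_2,h_3]\|_{L^1\cap L^2}+\|\nabla^2 h_3\|\lesssim(1+s)^{-1}$ supplied by the rates from Lemmas \ref{lem.X} and \ref{lem.Y}, and the two time convolutions $\int_0^t(1+t-s)^{-5/4}(1+s)^{-1}\,ds\lesssim(1+t)^{-1}$ and $\int_0^t(1+t-s)^{-3/4}(1+s)^{-1}\,ds\lesssim(1+t)^{-3/4}\ln(3+t)$. Your closing remark correctly identifies $u\times B$ as the source of the logarithmic loss, which matches the paper's own commentary in the introduction.
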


\begin{proof}
For $\|n\|$, from Corollary \ref{cor.ld}, one has
\begin{equation*}
\|n(t)\|\lesssim (1+t)^{-\frac{5}{4}}\|U_0\|_{L^1\cap L^2}
+\int_0^t (1+t-s )^{-\frac{5}{4}}\|[h_2(s),h_3(s)]\|_{L^1\cap L^2}ds.
\end{equation*}
Here, it holds that
\begin{equation*}
  \|[h_2(t),h_3(t)]\|_{L^1\cap L^2} \lesssim  \|U_0\|_{L^1\cap H^{N+2}}^2 (1+t)^{-\frac{5}{8}-\frac{3}{8}}\lesssim  \|U_0\|_{L^1\cap H^{N+2}}^2 (1+t)^{-1}.
\end{equation*}
Then, \eqref{lem.nE.1} follows. For $\|E(t)\|$, similarly, it suffices to notice from Corollary \ref{cor.ld} that
\begin{multline*}
\|E(t)\|\lesssim (1+t)^{-\frac{3}{4}}\|U_0\|_{L^1\cap H^3}
+\int_0^t (1+t-s )^{-\frac{3}{4}}\|h_2(s)\|_{L^1\cap L^2}ds\\
+\int_0^t (1+t-s )^{-\frac{3}{4}}(\|h_3(s)\|_{L^1\cap L^2}+\|\na^2 h_3(s)\|)ds\\
\lesssim   (1+t)^{-\frac{3}{4}}\|U_0\|_{L^1\cap H^3}+\int_0^t(1+t-s )^{-\frac{3}{4}}(1+s)^{-1}ds\|U_0\|_{L^1\cap H^{N+2}}^2\\
\lesssim\|U_0\|_{L^1\cap H^{N+2}}(1+t)^{-\frac{3}{4}}\ln (3+t).
\end{multline*}
Then \eqref{lem.nE.2} holds. This completes the proof of Lemma \ref{lem.nE}.
\end{proof}

\medskip

\noindent{\bf Proof of Theorem \ref{thm.main}:} The global existence is proved in Theorem \ref{thm.ge}, and the time decay rates are obtained in Theorem \ref{thm.odecay}. \qed

\section{Appendix}

In this section we provide a derivation of the Navier-Stokes-Maxwell system \eqref{NSM} from the one-species Vlasov-Maxwell-Boltzmann system. The main tool that we shall use is the Liu-Yang-Yu's macro-micro decomposition, which was initiated in Liu-Yu \cite{LY-BE} and developed in Liu-Yang-Yu \cite{LYY}, and also has extensive applications in the study of the stability of the Boltzmann equation, cf.~\cite{YZ-CMP} and references therein.

The Vlasov-Maxwell-Boltzmann system is a kinetic model in plasma physics to describe the time evolution of dilute charged particles (e.g., electrons and ions) with the self-consistent Lorentz forces \cite{MRS}. Under the same physical assumptions as for the  Navier-Stokes-Maxwell system \eqref{NSM}, the Vlasov-Maxwell-Boltzmann system takes the form of
\begin{equation}\label{VMB.o}
    \left\{\begin{array}{l}
      \dis \pa_t f+\xi\cdot \na_x f-(E+\xi \times B)\cdot \na_\xi f =Q
      (f,f),\\[3mm]
      \dis\pa_t E-\na_x\times B=\int_{\R^3}\xi f\, d\xi,\\[3mm]
      \dis\pa_t B +\na_x \times E =0,\\[3mm]
      \dis\na\cdot E=n_{\rm b}-\int_{\R^3}f \,d\xi,\ \ \na_x\cdot B=0.
    \end{array}\right.
\end{equation}
Here, the unknowns are $f=f(t,x,\xi): (0,\infty)\times \R^3\times \R^3\to [0,\infty)$, $E=E(t,x): (0,\infty)\times \R^3\to \R^3$ and $B=B(t,x): (0,\infty)\times \R^3\to \R^3$, with $f(t,x,\xi)$ standing for the number
distribution function of one-species of particles (e.g., electrons) which have position
$x$ and velocity
$\xi=(\xi_1,\xi_2,\xi_3)$ at time $t$, and $E(t,x)$ and
$B(t,x)$ denoting the electromagnetic field  as in the fluid case. Notice that throughout this appendix, we still use $\xi$, which is the same notation as in Section \ref{sec3.2}, to denote the micro velocity variable for simplicity, and this does not make any confusion.

Let $f(t,x,\xi)$ be the
solution to the system \eqref{VMB.o}. As in \cite{LYY}, in terms of $f(t,x,\xi)$,  we introduce
five fluid
quantities, that is,  the mass density $n(t,x)$, momentum density
$n(t,x) u(t,x)$ and energy density $e(t,x)+|u(t,x)|^2/2$  by
\begin{eqnarray*}
\left\{
\begin{array}{rl}
n(t,x) \equiv&{\displaystyle\int_{\R^3}}\psi_0(\xi) f(t,x,\xi) d
\xi,\\[3mm]
n(t,x) u_i(t,x) \equiv& {\displaystyle\int_{\R^3}} \psi_i(\xi)
f(t,x,\xi) d \xi, \quad  i=1,2,3,\\[3mm]
\left[\rho\left(e+\frac{1}{2} |u|^2\right)\right](t,x)
\equiv&\dis \int_{\R^3} \psi_{4}(\xi) f(t,x,\xi) d \xi,
\end{array}
\right.
\end{eqnarray*}
and further define the local Maxwellian as
\begin{eqnarray*}
{\bf M}\equiv {\bf M}_{[n,u,\theta]}
 (\xi) \equiv \frac{ n(t,x)  }{ (2 \pi
 \theta(t,x))^{3/2}} \exp\left( -\frac{|\xi -u(t,x)|^2  }{ 2 \theta(t,x)   }
 \right).
\end{eqnarray*}
Here $\theta(t,x)$ is the temperature related to the internal
energy $e(t,x)$ by $e=\frac{3}{2} \theta$,
$u(t,x)=\left(u_1(t,x), u_2(t,x), u_3(t,x)\right)$ is the
fluid velocity. As usual, $\psi_\alpha(\xi)$, $\alpha =0,1,
2,3, 4$, are the five collision invariants:
\begin{eqnarray*}
\left\{
\begin{array}{rl}
 \psi_0(\xi) \equiv& 1, \\[3mm]
 \psi_i(\xi) \equiv& \xi_i, \  \  i=1,2,3,\\[3mm]
 \psi_{4}(\xi) \equiv& \frac{1}{2} |\xi|^2.
 \end{array}
\right.
\end{eqnarray*}
With the above definitions, the Maxwell equations in system \eqref{VMB.o} can be rewritten as
\begin{equation}\label{A.max}
    \left\{\begin{array}{l}
      \dis\pa_t E-\na_x\times B=nu,\\[3mm]
      \dis\pa_t B +\na_x \times E =0,\\[3mm]
      \dis\na\cdot E=n_{\rm b}-n,\ \ \na_x\cdot B=0.
    \end{array}\right.
\end{equation}
This is exactly the same as the part of  the Maxwell equations in system \eqref{NSM}. The rest is to derive the same as the part of the Navier-Stokes equations in system \eqref{NSM}.

As in \cite{LYY}, let us
write $f(t,x,\xi)$ as the sum of the local Maxwellian ${\bf M}$ and
 microscopic component ${\bf G}={\bf
 G}(t,x,\xi)$, that is,
\begin{eqnarray}\label{1.7}
f(t,x,\xi)={\bf M}(t,x,\xi)+{\bf G}(t,x,\xi).
\end{eqnarray}
By doing this, the system \eqref{VMB.o}  can be reformulated
into a coupled system containing  the conservation laws for the
macroscopic fluid components and the equation for the microscopic
component. We shall see that the Navier-Stokes-Maxwell system \eqref{NSM} turns out to be the macroscopic fluid part.

For later use, we need to define the projection with respect to the Maxwellian $\FM$. To do that, define the inner product $\langle \cdot,\cdot\rangle_{\FM}$ over $L^2_\xi$ as
$$
 \langle h,g\rangle_{\FM}\equiv \int_{\R^3}
 h(\xi)g(\xi)\FM^{-1}d \xi.
$$
As in \cite{LYY}, the macroscopic linear subspace $\M$  spanned by
$
\{\psi_0(\xi){\FM}, \psi_i(\xi){\FM}, i=1,2,3,
 \psi_{4}(\xi){\FM}\}
$
has an orthogonal basis consisting of
\begin{eqnarray*}
\left\{
\begin{array}{rl}
\chi_0=&
\frac{1}{\sqrt{
n}}\FM,\\[3mm]
\chi_i=&
\frac{\xi_i-u_i}{\sqrt{
 n\theta}}\FM,  \ \
 i=1,2,3,\\[3mm]
\chi_{4}=&
\frac{ 1 }{\sqrt{6n}}
 \left(\frac{|\xi-{u}|^2}{{\theta}} -3 \right)\FM,\\[3mm]
\left\langle\chi_{\alpha},\chi_{\beta}\right\rangle_{\FM}=&\delta_{\alpha \beta}, \
 \ \ {\textrm{for}} \ \alpha, \ \beta=0,1,2,3, 4.
\end{array}
\right.
\end{eqnarray*}
The macroscopic projection ${\bf P}_0$ and
microscopic projection ${\bf P}_1$ can then be
defined by
\begin{equation*}
    {\bf P}_0 h \equiv  \sum\limits_{\alpha=0}^{4}
\left\langle h,\chi_{\alpha}
 \right\rangle_{\FM} \chi_\alpha,\ \
  {\bf P}_1 h \equiv   h- {\bf
 P}_0h.
\end{equation*}

Now, we go back to the Vlasov-Maxwell-Boltzmann equation \eqref{VMB.o}.  The property of collision invariants implies
\begin{eqnarray*}
\int_{\R^3} \psi_\alpha\Big\{f_t+\xi\cdot \nabla_x
f-(E+\xi \times B)\cdot\nabla_\xi f\Big\} d\xi =0,
\end{eqnarray*}
for  $\alpha=0,1,\cdots,4$.
Using the
decomposition \eqref{1.7} further gives
\begin{equation}\label{1.11}
\left\{
\begin{array}{l}
\pa_t \rho + \na_x\cdot (nu) =0, \\[3mm]
\pa_t (nu_i) + \sum\limits_{j=1}^3 \pa_{j}(nu_iu_j)+\pa_i P+n(E+u\times B)_i\\
\hspace{4cm}+ {\displaystyle\int_{\R^3}}\psi_i(\xi)\xi\cdot \nabla_x {\bf G} d \xi=0, \
 \  i=1,2,3,\\[3mm]
\pa_t\left[\rho\left(\frac{|u|^2}{2}+e\right)\right] +
\sum\limits_{j=1}^3\pa_j\left\{u_j\left[\rho\left(\frac{|u|^2}{2}+
e\right)+p\right]\right\}+nu\cdot(E+u\times B)\\
\hspace{4cm}+
{\displaystyle \int_{\R^3}}\psi_{4}(\xi) \xi \cdot
\nabla_x {\bf G}
 d \xi=0.
\end{array}
\right.
\end{equation}
Here and hereafter we use $\pa_i=\pa_{x_i}$ for simplicity. In \eqref{1.11}, the state equation for the monatomic gas is given by
$P=\frac 2 3 n e$ and hence $P=n\theta$.
By noticing that $\pa_t{\bf G}$, $\nabla_\xi{\bf G}$, $\FL_{\bf M}{\bf
G}$, and $Q({\bf G}, {\bf G})$ are microscopic, the
microscopic equation is obtained by applying the microscopic
projection ${\bf P}_1$ to the first equation of  system \eqref{VMB.o}:
\begin{equation}\label{1.12}
 \pa_t{\bf G}+ {\bf P}_1(\xi \cdot \nabla_x {\bf M}+\xi \cdot \nabla_x
 {\bf G} )-(E+\xi\times B)\cdot\nabla_\xi {\bf G}
  = \FL_{\bf M} {\bf G} + Q({\bf G},{\bf G}),
\end{equation}
where  $ \FL_{\bf M}$ is the linearized collision operator defined
by
\begin{eqnarray*}
 \FL_{\bf M}g \equiv Q(\FM+g,\FM+g)-Q(g,g)=Q(\FM,g)+Q(g,\FM).
\end{eqnarray*}
It is known that the null space of $\FL_{\bf M}$ is exactly
$\M$. Since $\FL_{\bf M}$ is a bounded and one-to-one  operator on
$\M^\bot$, from (\ref{1.12}), one has
\begin{equation}\label{1.18}
{\bf G}=  \FL_{\bf M}^{-1} \left(
  {\bf P}_1 \left(\xi \cdot \nabla_x {\bf M}\right) \right)+\Theta
\end{equation}
with
\begin{equation}\label{A.The}
    \Theta=\FL_{\bf M}^{-1} \left( \partial_t {\bf G} +
 {\bf P}_1 \left(\xi \cdot \nabla_x {\bf G}
 \right)-(E+\xi\times B)\cdot\nabla_\xi{\bf G}
 -Q({\bf G},{\bf G}) \right).
\end{equation}
Substituting (\ref{1.18}) into (\ref{1.11}) yields the following fluid-type system
\begin{eqnarray}\label{1.19}
\left\{
 \begin{array}{l}
 \pa_tn + \na_x\cdot (nu)=0, \\[3mm]
 \pa_t (nu_i)+
 \sum\limits_{j=1}^3\pa_j\left(nu_iu_j\right)+\pa_iP+n(E+u\times B)_i\\
\quad\quad =\sum\limits_{j=1}^3\pa_j\left[\nu(\theta)\tau_{ij}\right]
-{\displaystyle\int_{\R^3}}\psi_i(\xi) \xi\cdot\nabla_x \Theta d \xi,
\ i=1,2,3,\\[5mm]
 \pa_t\left[n( \frac 12|u|^2+e)\right] +
 \na_x\cdot
 (u(n(\frac 12 |u|^2 + e)+p))
 +nu\cdot(E+u\times B) \\[3mm]
\quad\quad =\sum\limits_{i,j=1}^3\pa_j\left\{\nu(\theta)u_i\tau_{ij}\right\}
+\sum\limits_{j=1}^3\pa_j\left(\kappa(\theta)\pa_j\theta\right)
-{\displaystyle \int_{\R^3}}
\psi_{4}(\xi)\xi\cdot\nabla_x \Theta d \xi.
\end{array}
\right.
\end{eqnarray}
with
\begin{equation*}
    \tau_{ij}=\pa_ju_{i}+\pa_iu_{j}
 -\frac 23\delta_{ij}\na_x\cdot u.
\end{equation*}
Here,  the viscosity
coefficient $\nu(\theta)$ and heat conductivity coefficient
$\kappa(\theta)$ both depending only on $\theta$ can be explicitly given by introducing the Burnett functions, cf.~\cite{YZ-CMP}.

Observe that the microscopic term $\Theta$ in \eqref{A.The} is generated by the linear superposition of the first-order time-space derivative of ${\bf G}$ and the nonlinear terms. If all the terms containing  $\Theta$ in \eqref{1.19} are dropped, then by recalling \eqref{A.max}, the fluid quantities $n$, $u$, $e=\frac{3}{2}\theta$ and the electromagnetic field $E$, $B$ satisfy the following coupled Navier-Stokes-Maxwell system in the non-isentropic case
\begin{equation*}
    \left\{\begin{array}{l}
    \pa_tn + \na_x\cdot (nu)=0, \\[3mm]
 \pa_t (nu_i)+
 \sum\limits_{j=1}^3\pa_j\left(nu_iu_j\right)+\pa_i P+n(E+u\times B)_i\\
\hspace{4cm} =\sum\limits_{j=1}^3\pa_j\left[\nu(\theta)\tau_{ij}\right],
\ i=1,2,3,\\[5mm]
 \pa_t\left[n( \frac 12|u|^2+e)\right] +
 \na_x\cdot
 (u(n(\frac 12 |u|^2 + e)+p))
 +nu\cdot(E+u\times B) \\[3mm]
\hspace{4cm} =\sum\limits_{i,j=1}^3\pa_j\left\{\nu(\theta)u_i\tau_{ij}\right\}
+\sum\limits_{j=1}^3\pa_j\left(\kappa(\theta)\pa_j\theta\right),\\[3mm]
      \dis\pa_t E-\na_x\times B=nu,\\[3mm]
      \dis\pa_t B +\na_x \times E =0,\\[3mm]
      \dis\na\cdot E=n_{\rm b}-n,\ \ \na_x\cdot B=0.
    \end{array}\right.
\end{equation*}
System \eqref{NSM} that we have studied here is the simplified form in the isentropic case.

\smallskip
{\bf Acknowledgment.} This project was supported by the Direct Grant 2010/2011 from CUHK and partially by the General Research Fund (Project No.~400511) from RGC of Hong Kong.

\medskip


\end{document}